\declaretheorem[
name=Theorem,
Refname={Theorem,Theorems},	
numberwithin=section
]{theorem}
\declaretheorem[
name=Proposition,
Refname={Proposition,Propositions},
sibling=theorem
]{proposition}
\declaretheorem[
name=Lemma,
Refname={Lemma,Lemmas},
sibling=theorem
]{lemma}
\declaretheorem[
name=Corollary,
Refname={Corollary,Corollaries},
sibling=theorem
]{corollary}
\declaretheorem[
name=Claim,
unnumbered
]{claim*}
\declaretheorem[
name=Definition,
style=definition,
Refname={Definition,Definitions},
sibling=theorem
]{definition}
\declaretheorem[
name=Remark,
style=remark,
Refname={Remark,Remarks},
sibling=theorem
]{remark}
\declaretheorem[
name=Remark,
style=remark,
unnumbered
]{remark*}
\newcounter{listCounter}
\newcommand{\IR}{{\mathbb{R}}}
\newcommand{\mcA}{{\mathcal{A}}}
\newcommand{\mcT}{{\mathcal{T}}}
\newcommand{\mcX}{{\mathcal{X}}}
\newcommand{\mcY}{{\mathcal{Y}}}
\newcommand{\mcV}{{\mathcal{V}}}
\newcommand{\mcW}{{\mathcal{W}}} 
\newcommand{\abs}[1]{{\left|{#1}\right|}}
\newcommand{\norm}[1]{{\|#1\|}}
\newcommand{\inprod}[2]{{\left\langle#1,#2\right\rangle}}
\newcommand{\pinprod}[2]{{\left(#1,#2\right)}}
\newcommand{\md}{\mathrm{d}}
\newcommand{\loc}{{\mathrm{loc}}}
\newcommand{\emptyarg}{{\,\cdot\,}}
\newcommand{\Tr}{{\mathrm{Tr}\ }}
\DeclareMathOperator*{\esssup}{ess\,sup}
\newcommand{\solA}{u}	\newcommand{\solAvar}{{\tilde{\solA}}}
\newcommand{\solB}{v}	\newcommand{\solBvar}{{\tilde{\solB}}}
\newcommand{\solC}{w}	\newcommand{\solCvar}{{\tilde{\solC}}}
\newcommand{\testA}{\eta}
\newcommand{\testB}{\psi}
\newcommand{\testC}{\varphi}
\newcommand{\nonlin}{\phi}
\newcommand{\Nonlin}{\Phi}
\newcommand{\source}{f}
\newcommand{\sourcevar}{{\tilde{\source}}}
\newcommand{\LipBoundSource}{L}
\title{Well-posedness of singular-degenerate porous medium type equations and application to biofilm models}
\author{
	Hissink Muller, Victor\footnote{corresponding author}\\
	Radboud Universiteit, IMAPP - Mathematics\\
	PO Box 9010, 6500 GL Nijmegen,
	The Netherlands\\
	\texttt{V.HissinkMuller@math.ru.nl}	
	\and 
	Sonner, Stefanie\\
	Radboud Universiteit, IMAPP - Mathematics\\
	PO Box 9010, 6500 GL Nijmegen,
	The Netherlands\\
 	\texttt{S.Sonner@math.ru.nl} 	
 }
\date{}
\begin{document}
	\maketitle
	
\begin{abstract}
We show the well-posedness for a large class of degenerate parabolic 
equations with an additional singularity and mixed Dirichlet-Neumann boundary conditions on bounded Lipschitz domains.
The proof is based on an $L^1$-contraction result. In addition, we analyze systems where  degenerate equations are coupled to semilinear reaction diffusion equations. 
This setting includes mathematical models for biofilm growth which are the motivation for our analysis.     
\end{abstract}

\noindent\textbf{Keywords:} quasilinear degenerate reaction diffusion system, slow/fast diffusion, 
mixed boundary condition,
well-posedness, $L^1$-contraction, biofilm\\

\noindent\textbf{MathSubjClass:} 35K57 (35K65, 35K67, 35K20, 92C17, 92D25)\\

\noindent Declarations of interest: none.

\section{Introduction}\label{sect:Introduction}
We establish well-posedness results for a large class of second-order quasilinear degenerate parabolic equations with mixed Dirichlet-Neumann boundary conditions. In particular, we prove the existence and an $L^1$-contraction result for solutions of initial-/boundary value problems of the form
\begin{subequations}\label{eq:SD-PME.Problem}
	\begin{empheq}[left=\empheqlbrace]{align}
		\solA_t	&= \Delta\nonlin(\solA)+\source(\emptyarg,\solA)	&\quad &\text{in }\Omega\times(0,T),\label{eq:SD-PME.Problem.DiffEqn}\\
		\solA &=\solA_0											&\quad &\text{in }\Omega\times\{0\},\\
		\nonlin(u)	&= \nonlin(\solA^D)											&\quad &\text{on }\Gamma\times(0,T),\\\label{eq:SD-PME.Problem.Condition.Dirichlet}
		\partial_\nu\nonlin(\solA)		&=0													&\quad &\text{on }\partial\Omega\backslash\Gamma\times(0,T),
	\end{empheq}
\end{subequations}
where the solution $\solA$ takes values in $[0,1)$ and $\nonlin:[0,1)\to\mathbb{R}$ is a strictly increasing function with a degeneracy $\nonlin'(0)=0$ and a singularity $\nonlin(1)=\infty$.
Moreover, $T>0$, $\Omega\subset \IR^N$ is a bounded Lipschitz domain with boundary $\partial \Omega$, and $\Gamma\subset\partial\Omega$ has strictly positive measure and is the part of the domain on which Dirichlet boundary conditions are prescribed.

Furthermore, we consider \Cref{eq:SD-PME.Problem} coupled to a semilinear reaction-diffusion equation and extend the well-posedness theory for coupled systems of the form
\begin{subequations}\label{eq:SD-PME.coupled.Problem}
	\begin{empheq}[left=\empheqlbrace]{align}
		\begin{aligned}	
			\solA_t&=\Delta\nonlin(\solA)+{f}(\emptyarg,\solA,\solB)\\	
			\solB_t&=\Delta\solB+{g}(\emptyarg,\solA,\solB)
		\end{aligned}	&\qquad\text{in }\Omega\times(0,T), \label{eq:coupled.SD-PME.problem.diff.eq}
	\end{empheq}
	with the initial data and mixed boundary conditions
	\begin{empheq}[left=\empheqlbrace]{align}
		\begin{aligned}
			\solA&=\solA_0&& \text{on }{\Omega}\times\{0\},
			&\solB&=\solB_0
			&&\text{on }{\Omega}\times\{0\},\\
			\nonlin(\solA)	&=\nonlin(\solA^D)	& &\text{on}\ \Gamma_1\times(0,T),&\quad \partial_\nu\nonlin(\solA)&=0&&\text{on}\ \partial\Omega\backslash\Gamma_1\times(0,T),\\
			\solB			&=\solB^D			& &\text{on}\ \Gamma_2\times(0,T),&\quad \partial_\nu \solB		&=0&&\text{on}\ \partial\Omega\backslash\Gamma_2\times(0,T),
		\end{aligned}
	\end{empheq}
\end{subequations}
where $\Gamma_1,\Gamma_2\subset\partial\Omega$ have positive measure.

The motivation for our analysis is the biofilm growth  model introduced and numerically studied in \cite{Eberl2000}. Biofilms are dense aggregations of bacterial cells encased in a slimy matrix of extracellular polymeric substances that grow in moist environments, often attached to a surface. On the mesoscale, mature biofilms can show complex heterogeneous spatial structures and mushroom shaped architectures that the model \cite{Eberl2000} is capable to predict.

The biofilm growth model consists of two reaction-diffusion equations for the biomass density $M$ and the growth limiting nutrient concentration $C$. The biofilm and surrounding region are assumed to be continua that are separated by a sharp interface. 
Both model variables are dimensionless, $C$ is scaled with respect to the bulk concentration and $M$ with respect to the maximum biomass density. 
The equations are coupled via the reaction terms, which are Monod functions describing biomass production and nutrient consumption.
While the nutrient is dissolved in the domain and $C$ satisfies a classical semilinear reaction-diffusion equation, the equation for the biomass density is quasilinear with a diffusion coefficient that vanishes as the biomass density approaches zero and blows up as the biomass density approaches its maximum value.
In particular, $C$ and $M$ satisfy the system	
\begin{equation}\label{eq:Biofilm}
	\left\{\begin{aligned}
		\partial_t M&=d_2\nabla\cdot\left(D(M)\nabla M\right)-K_2M+K_3\frac{CM}{K_4+C}&&\text{in}\ \Omega\times(0,T),\\
		\partial_t C&=d_1\Delta C-K_1\frac{CM}{K_4+C}&&\text{in}\ \Omega\times(0,T),
	\end{aligned}\right.
\end{equation}
where $D$ is given by 
\begin{align*}
	D(M)=\frac{M^b}{(1-M)^a}, \qquad a\geq 1, \ b> 0.
\end{align*}
The diffusion coefficient of the nutrient $d_1$ and the biomass motility coefficient $d_2$ are positive, the lysis rate
$K_1$, the maximum specific consumption rate $K_2$ and the 
maximum specific growth rate $K_3$ are non-negative and the half-saturation constant $K_4$ is positive.
Moreover, $\Omega\subset \mathbb{R}^n$, $n=1,2,3,$ is a bounded domain 
and the actual biofilm is the subregion where $M$ is positive, 
\[
\Omega_{M}(t)=\{x\in\Omega\mid M(x,t)>0\},
\]
see \Cref{fig:biofilm.scheme}. Here, the surface on which the 
biofilm grows is the bottom part of the boundary.

\begin{figure}
	\centering
	\begin{tikzpicture}
	\filldraw[pattern=north west lines, pattern color=black!30!white, draw=black] plot[smooth, tension=.7] coordinates {(0,0) (-0.25,2) (0.25,3) (-2,4) (-2,1) (-2.5,0)};
	\filldraw[pattern=north west lines, pattern color=black!30!white, draw=black] plot[smooth, tension=.7] coordinates {(1.25,0) (1,1.5) (2,2) (2.7,1.7) (2.8,1) (2.5,0)};
	\node at (-1.1, 1.8) {$\Omega_M(t)$};
	\node at (2, 0.75) {$\Omega_M(t)$};
	\node at (1.2, 2.3) {$M(t)=0$};
	\node at (-0.6, 2.85) {$M(t)>0$};
	\node at (1.9, 1.55) {$M(t)>0$};
	\node at (-2.7,4.7) {$\Omega$};
	\draw[line width=1pt] (-3,0) -- (3,0) -- (3,5) -- (-3,5) -- cycle;
	\end{tikzpicture}
	
	\caption{The biofilm $\Omega_M$ in the bulk liquid $\Omega$.}
	\label{fig:biofilm.scheme}
\end{figure}
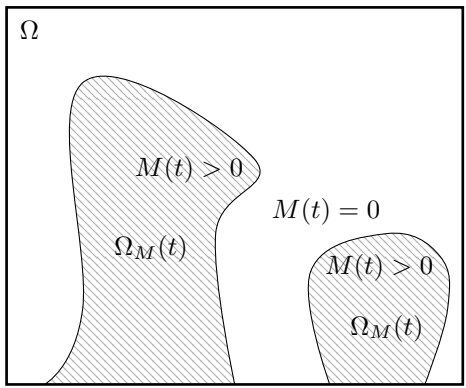

The biomass diffusion coefficient $D$  has a degeneracy as known from the porous medium equation, $\solA_t=\Delta\solA^m$, $m>1$. It ensures a finite speed of interface propagation and a sharp interface between the biofilm and the surrounding liquid.
On the other hand, the singularity in the diffusion coefficient for $M=1$ ensures that spatial spreading becomes very large whenever $M$ approaches $1$, which ensures that the biomass density remains bounded by its maximum value.
In particular, no boundedness assumption is needed for the reaction terms.
Setting 
\[
\nonlin(\solA)=\int_0^\solA\frac{z^b}{(1-z)^a}\md z,
\]
we see that the equation for the biofilm density in \eqref{eq:Biofilm} is a particular case of \eqref{eq:SD-PME.Problem}.

In simulation studies, the surface on which the biofilm grows is typically the 
bottom part of the boundary, see \Cref{fig:biofilm.scheme}. This
surface is impermeable to nutrients and biomass and hence, homogeneous Neumann boundary conditions are specified for $C$ and $M$. On the lateral boundaries, homogeneous Neumann boundary conditions are assumed as well. 
Through the top boundary, nutrients are added to the system which can be modeled by Dirichlet boundary conditions, setting the level of the nutrient concentration to the concentration in the bulk liquid. For the biomass density, homogeneous Dirichlet conditions are imposed on the top boundary. 
Initially, small pockets of biomass are placed on the 
bottom boundary with initial biomass density $M_0<1$. Everywhere else in the domain $M_0$ is zero, and the substrate concentration $C_0$ is set to the value of the bulk concentration everywhere in $\Omega$.  Hence, the biofilm model is a particular case of the coupled system with mixed Dirichlet-Neumann boundary conditions \eqref{eq:SD-PME.coupled.Problem}.

The biofilm growth model \cite{Eberl2000} has been studied in a series of papers, mainly in simulation studies, and has been further extended to take additional biofilm processes into account, see e.g. \cite{EfSen},\cite{GhSoEb},\cite{RaSuEb},\cite{SoEfEb}. This led to more involved, strongly coupled systems involving several dissolved substrates and multiple types of biomass. 
The well-posedness and long-time behavior of solutions of the biofilm model \eqref{eq:Biofilm} was studied in \cite{Efendiev2009}. 
It was shown that unique global, non-negative solutions exist 
and that the system generates a semigroup that 
possesses a compact global attractor. However, the analysis was based on the assumption of homogeneous Dirichlet boundary conditions for $M$ and that the domain $\Omega$ is piece-wise smooth.

Our aim is to extend the well-posedness theory for the significantly larger class of systems \eqref{eq:SD-PME.coupled.Problem} and to allow for mixed Dirichlet-Neumann boundary data and more general domains. Having a solution theory for mixed boundary conditions is essential for biofilm modeling applications.
In \cite{Efendiev2009}, the existence of solutions  was proven by regularizing the diffusion coefficient and considering smooth, non-degenerate approximations. The solutions of the degenerate system were obtained as limits of classical solutions of the approximate problems.
The existence theory for classical solutions of quasilinear non-degenerate parabolic equations is well-established, see for example \cite{lady1968} or \cite{lieberman2005}, and relies on Schauder's fixed point theorem and uniform bounds on the H\"older norms of the solutions.
The approach in \cite{Efendiev2009} has the advantage that all computations are rigorous by the smoothness of the approximate solutions. 
Moreover, interior continuity of the solutions can be deduced from \cite{Sacks83}.
Indeed, the modulus of continuity of the approximate solutions does not depend on the regularization parameter. Therefore, the interior continuity is transferred to the solution, since it is the point-wise limit of classical solutions.
The disadvantage of the method is that it does not apply to more general domains for non-vanishing Dirichlet boundary data and for mixed Dirichlet-Neumann boundary conditions, since the theory of classical solutions requires higher regularity of the boundary.

To overcome these restrictions, we base our approach on the existence result in \cite{Alt-Luck1983}.
This influential paper uses a time-discretization scheme with Galerkin approximations and energy estimates to prove the existence of solutions for a very general class of quasilinear problems.
In particular, the theory covers Lipschitz domains and mixed boundary conditions such as \eqref{eq:SD-PME.Problem.Condition.Dirichlet}.
However, we do not obtain classical solutions that approximate the solutions of the degenerate problem and cannot conclude the continuity of solutions based on \cite{Sacks83}. 
Nevertheless, we can show the continuity of solutions using intrinsic scaling methods which is subject of a subsequent work. In particular, in \cite{HiMu} we prove the interior H\"older continuity of solutions for a class of coupled systems including the biofilm growth model \eqref{eq:Biofilm}.

Crucial for our uniqueness proof and the continuous dependence on initial data is a $L^1$-contraction result.
In \cite{Efendiev2009}, such an estimate was shown by considering a parabolic problem
for the difference of two solutions of equation \eqref{eq:SD-PME.Problem}.
The arguments require additional regularity of the solutions and only hold for solutions bounded away from $1$.
We show a more general $L^1$-contraction result by adjusting the proof in \cite{Otto95} to our setting.
The approach in \cite{Otto95} is based on the doubling of the time-variable and holds for a large class of degenerate problems.
It uses weaker notions of solutions than we consider and does not cover time-dependent reaction terms, which is important in our case. 
Hence, we show that the $L^1$-contraction result can be extended for equations of the form \eqref{eq:SD-PME.Problem} assuming a Lipschitz condition for the reaction term.
Our result also covers a comparison principle which 
we need to study the coupled system \eqref{eq:SD-PME.coupled.Problem}.
To prove the $L^1$-contraction we derive a chain rule for the time derivative in \eqref{eq:SD-PME.Problem.DiffEqn}.
This chain rule is formulated in greater generality than we actually need, but 
it might be of independent interest.
Finally, we use $L^1$-contraction and Banach's Fixed Point Theorem to show the well-posedness of the coupled system \eqref{eq:SD-PME.coupled.Problem}.

We study the scalar equation \eqref{eq:SD-PME.Problem} in a general setting.
Although our main interest are equations where the diffusion coefficient is degenerate in $0$ and $\nonlin$ is singular in $1$, our analysis also covers non-singular and/or non-degenerate equations.
Therefore, our results apply to a wide class of reaction-diffusion models.
For instance, it includes models for collective cell spreading considered in \cite{SiBaMC} or 
the Porous-Fischer equation $\solA_t=\Delta\solA^m+\solA(1-\solA)$, $m>1$, studied e.g. in \cite{mccue2019hole}.

The outline of our paper is as follows. 
In \Cref{sect:Assumptions}, we state our assumptions, introduce the class of solutions we consider and 
formulate the main results. 
Note that we first consider the scalar equation \eqref{eq:SD-PME.Problem} in the general setting, covering both the degenerate and non-degenerate case in a unified approach.
The theory for the single equation is then used to study the coupled system \eqref{eq:SD-PME.coupled.Problem}.
In \Cref{sect:Preliminaries}, we prove a chain rule for the time derivative in \eqref{eq:SD-PME.Problem.DiffEqn} and show the equivalence of two solution concepts. 
The proof of the chain rule exploits properties of Bochner spaces and Steklov averaging that 
we provide in \Cref{sect:Appendix}.
In \Cref{sect:L1-contraction}, the chain rule is used to prove the $L^1$-contraction and an energy estimate. Moreover, we show the well-posedness for the scalar equation \eqref{eq:SD-PME.Problem}.
Finally, in \Cref{sect:Well-posedness}, we use these results and Banach's Fixed Point Theorem to prove the well-posedness of the coupled system \eqref{eq:SD-PME.coupled.Problem}.

\section{Hypotheses and main results}\label{sect:Assumptions}

We consider the boundary-/initial value problem \eqref{eq:SD-PME.Problem}.
We assume that $\Omega\subset \IR^N$ is a bounded Lipschitz domain and that $\Gamma\subseteq\partial\Omega$ is measurable with Hausdorff measure $H^{N-1}(\Gamma)>0$.
Let $\Omega_T=\Omega\times (0,T]$, $T>0$ and ${I}\subseteq \IR$ be an open interval.

\begin{remark}
	The interval ${I}$ is introduced to provide a unified approach for both, degenerate and non-degenerate equations. 
	For instance, we include semilinear equations, where $\nonlin(z)=z$, by setting ${I}=\IR$ and degenerate quasilinear equations with additional singularity, where $\nonlin'(0)=0$ and $\nonlin(1)=\infty$, by setting ${I}=(-1,1)$.
\end{remark}

Our well-posedness result is based on the following assumptions.
The function $\nonlin:{I}\to\IR$ satisfies the structural assumptions:
\newcounter{counterHypotheses}
\begin{enumerate}[label=\upshape(H\arabic*),ref=\upshape H\arabic*]
	\item $\nonlin$ is continuous and strictly increasing;  \label{itm:nonlin.basic.assumption}
	\item $\nonlin$ is surjective; \label{itm:nonlin.blow-up.assumption}
	\item $\nonlin$ is piece-wise continuously differentiable and either $\nonlin'\geq \alpha>0$ for a constant $\alpha$ or there exists a $z_0\in {I}$ such that $\nonlin'(z_0)=0$ and $\nonlin$ is convex on ${I}\cap [z_0,\infty)$ and concave on ${I}\cap (-\infty,z_0]$. \label{itm:nonlin.convex.differentiable.assumption}
	\setcounter{counterHypotheses}{\value{enumi}}
\end{enumerate}
Without loss of generality, we assume that 
$0\in {I}$ and $\nonlin(0)=0$, otherwise we can replace $\nonlin$ by $\nonlin(\emptyarg+c_1)+c_2$ for some constants $c_1$ and $c_2$ and transform $\source$ accordingly.
The possible blow-up behaviour is encoded in \eqref{itm:nonlin.blow-up.assumption}, if the interval ${I}$ is finite.
For instance, if $I=(-1,1)$, then \eqref{itm:nonlin.blow-up.assumption} implies that $\nonlin(\pm1)=\pm\infty$.
Finally, we point out that the assumptions \eqref{itm:nonlin.basic.assumption} and 
\eqref{itm:nonlin.blow-up.assumption} are sufficient to prove our well-posedness results. 
The last property  \eqref{itm:nonlin.convex.differentiable.assumption} is a technical assumption that allows us to show additional regularity of the solutions.
Here, we may also assume without loss of generality that $z_0=0$, by picking $c_1=-z_0$ when replacing $\nonlin$ above.

\newcounter{counterHypothesesReactionTerms}
The reaction function $\source:\Omega_T\times I\to\IR$ is measurable and satisfies the assumptions:
\begin{enumerate}[label=\upshape(R\arabic*),ref=\upshape R\arabic*]
	\item\label{itm:source.Lipschitz.assumption} $\source$ is uniformly Lipschitz continuous with respect to the last argument, that is, there exists a $\LipBoundSource\geq 0$ such that
	\begin{equation*}
		\norm{\source(\emptyarg,z_1)-\source(\emptyarg,z_2)}_{L^\infty(\Omega_T)}\leq \LipBoundSource\abs{z_1-z_2}\quad\text{for all $z_1,z_2\in {I}$;}
	\end{equation*}
	\item\label{itm:source.growth.condition} $\source$ satisfies the growth condition
	\begin{equation*}
		\abs{\source(\emptyarg,z)}\leq C(1+\Nonlin(z)^{\frac{1}{2}})\quad \text{for all $z\in{I}$.}
	\end{equation*}
	\setcounter{counterHypothesesReactionTerms}{\value{enumi}}
\end{enumerate}
It is important to point out that \eqref{itm:source.growth.condition} follows from \eqref{itm:source.Lipschitz.assumption} if the interval ${I}$ is bounded.
Moreover, in this case, extending $\source$ to $\Omega_T\times\IR$, the assumption  \eqref{itm:source.Lipschitz.assumption} holds if $\source$ is locally Lipschitz continuous with respect to $z\in\IR$. 
However, if ${I}$ is unbounded and $\nonlin$ does not have a singularity, \eqref{itm:source.Lipschitz.assumption} is a rather restrictive assumption.

For what follows we introduce $\Nonlin(z):=\int_0^z\nonlin(\tilde{z})\md z$.
The quantity $\int_{\Omega}\Nonlin(\solA)$ is interpreted as the energy of the solution.
\begin{enumerate}[label=\upshape(BC),ref=\upshape BC]
	\item\label{itm:Dirichlet.boundary.condtion} 
	The Dirichlet data $\solA^D:\Omega\to I$ is measurable and $\nonlin(\solA^D)\in L^\infty(\Omega)\cap H^1(\Omega)$.
\end{enumerate}
\begin{enumerate}[label=\upshape(IC),ref=\upshape IC]
	\item\label{itm:initial.condtion} 
	The initial data $\solA_0:\Omega\to {I}$ is measurable and $\Nonlin(\solA_0)\in L^1(\Omega)$.
\end{enumerate}

The spatial regularity of $\solA^D$ allows us to formulate \eqref{eq:SD-PME.Problem.Condition.Dirichlet} in trace sense.
To this end we define the closed subspace 
\[
{V}=\left\{\solC\in H^1(\Omega)\mid \Tr\solC =0\ \text{a.e.\ on}\ \Gamma\right\}
\]
of $H^1(\Omega)$ and will require that solutions satisfy $\nonlin(\solA)\in \nonlin(\solA^D)+L^2(0,T;{V})$. 
The dual space of $V$ is denoted by $V^*$.
Observe that elements of ${V}$ satisfy the Poincar\'e inequality, since $\Gamma$ has strictly positive measure, see Theorem 7.91 
in \cite{Salsa2016}.

\begin{remark}
	From \eqref{itm:nonlin.basic.assumption}, \eqref{itm:Dirichlet.boundary.condtion} and \eqref{itm:initial.condtion} it is inferred that $\solA_0\in L^1(\Omega)$.
	Indeed, \eqref{itm:nonlin.basic.assumption} and the assumption $\nonlin(0)=0$ imply that $\Nonlin(z_1)-\Nonlin(z_2)\geq \nonlin(z_2)(z_1-z_2)$ for all $z_1,z_2\in{I}$.
	Let $z\in{I}$, $\delta>0$ and set $z_1={z}$, $z_2=\mathrm{sign}(z)\beta(\frac{1}{\delta})$, where $\beta:=\nonlin^{-1}$.
	We conclude that $\Nonlin(z)\geq\Nonlin(z)-\Nonlin(z_2)\geq\frac{1}{\delta}\mathrm{sign}(z)(z-z_2) = \frac{1}{\delta}\left(\abs{z}-\beta(\frac{1}{\delta})\right)$.
	Therefore,
	\begin{equation}\label{eq:estimate.by.energy.functional}
		\abs{z}\leq \delta\left(\Nonlin(z)+\beta\left(\frac{1}{\delta} \right)\right),\quad z\in {I},\ \delta>0.
	\end{equation}
	In particular, $\abs{\solA_0}\leq c_1\Nonlin(\solA_0) +c_2$ for certain $c_1,c_2>0$, so $\solA_0\in L^1(\Omega)$.
	
	In fact, most of the analysis only requires that $\solA_0\in L^1(\Omega)$ and $\Nonlin(\solA_0;\solA^D)\in L^1(\Omega)$, where
	\[
	\Nonlin(z;\bar{z}):=\int_{\bar{z}}^z\nonlin(\tilde{z})-\nonlin(\bar{z})\ \md \tilde{z},
	\]
	and $\int_\Omega \Nonlin(\solA;\bar{\solA})$ is the relative energy functional of $\solA$ relative to $\bar{\solA}$.
	This is important to remark in the case of more general boundary data $u^D$. 
	The assumption $\Nonlin(\solA_0;\solA^D)\in L^1(\Omega)$ implies that $\solA_0$ has finite energy relative to $\solA^D$.
	However, since $\nonlin(\solA^D)$ is bounded by \eqref{itm:Dirichlet.boundary.condtion}, in our setting, it is sufficient to impose $\Nonlin(\solA_0)\in L^1(\Omega)$, i.e.\ $\solA_0$ has finite (absolute) energy.
\end{remark}

We consider the following class of solutions. 
Here, we denote by $\pinprod{\emptyarg}{\emptyarg}$ the $\pinprod{{V}^*}{V}$ dual pairing and by $\inprod{\emptyarg}{\emptyarg}$ the inner product in $L^2(\Omega)$. 
Further, we define the function spaces
\begin{align*}
	\mcV= L^2(0,T;{V})\quad\text{and}\quad \mcW=\{\solA\in L^\infty(0,T;L^1(\Omega))\mid \solA_t\in L^2(0,T;{V}^*)\},
\end{align*}
where $\solA_t$ is understood in the sense of distributions. Therefore, $\solA\in \mcW$ if and only if $\solA\in L^\infty(0,T;L^1(\Omega))$ and there exists a (necessarily unique) $\solB\in L^2(0,T;{V}^*)$ such that
\begin{equation}\label{eq:solution.space.time.derivative}
	\int_0^T\pinprod{\solB}{\eta}=-\int_0^T\solA\eta_t
\end{equation}
for all $\eta\in W^{1,1}(0,T;L^\infty(\Omega))\cap L^2(0,T;{V})$ such that $\eta(0)=\eta(T)=0$.
We write $\solA_t=\solB$.

\begin{definition}\label{def:SD-PME.solution.including.time-derivative}
	A measurable function $\solA:\Omega_T\rightarrow I$ is a \emph{solution} of \eqref{eq:SD-PME.Problem} if
	\begin{enumerate}[label=(\roman*),font=\itshape]
		\item $\solA\in \mcW$, $\nonlin(\solA)\in \nonlin(\solA^D)+\mcV$,
		\item $\solA$ satisfies the initial condition, i.e.\ the identity
		\begin{equation}\label{eq:SD-PME.solution.including.time-derivative.init.cond.id}
			\displayindent0pt
			\displaywidth\textwidth
			\int_0^T\left(\pinprod{\solA_t}{\testA}+\int_{\Omega}(\solA-\solA_0)\testA_t\right)=0
		\end{equation}
		holds for all $\eta\in \mcV \cap W^{1,1}(0,T;L^\infty(\Omega))$ with $\eta(T)=0$, and
		\item $\solA$ satisfies \eqref{eq:SD-PME.Problem.DiffEqn} in the distributional sense, i.e.\ the identity
		\begin{equation}\label{eq:SD-PME.solution.including.time-derivative.id}
			\displayindent0pt
			\displaywidth\textwidth
			\begin{aligned}
				\int_0^T\left[\pinprod{\solA_t}{\testA}+\inprod{\nabla\nonlin(\solA)}{\nabla\testA}\right]=\int_0^T\inprod{\source(\emptyarg,\solA)}{\eta}
			\end{aligned}
		\end{equation}
		holds for all $\eta\in \mcV$.
	\end{enumerate}
\end{definition}

We remark that this definition can be simplified if ${I}$ is bounded or if $\beta:=\nonlin^{-1}$ is Lipschitz continuous and $\solA_0\in {V}^*$.
In both cases, $\solA$ can be interpreted as an element in $L^2(0,T;{V}^*)$.
Then, \eqref{eq:SD-PME.solution.including.time-derivative.init.cond.id} implies that $\solA\in H^1(0,T;{V}^*)$ with weak derivative $\solA_t$ and $\solA(0)=\solA_0\in {V}^*$.
\begin{remark}
	We will show that \Cref{def:SD-PME.sub-super.solution.including.time-derivative} \textit{(i)} implies that $\Phi(\solA)\in L^\infty(0,T;L^1(\Omega))$, see \Cref{cor:time-regularity.energy.functional}.
	Then, \eqref{itm:source.growth.condition} implies that $\source(\emptyarg\solA)$ is in $L^\infty(0,T;L^2(\Omega))$ so that the right-hand side of \eqref{eq:SD-PME.solution.including.time-derivative.id} is well-defined.
\end{remark}

\begin{definition}\label{def:SD-PME.sub-super.solution.including.time-derivative}
	A measurable function $\solA:\Omega_T\rightarrow {I}$ is a \emph{subsolution} (\emph{supersolution}) of \eqref{eq:SD-PME.Problem} if
	\begin{enumerate}[label=(\roman*),font=\itshape]
		\item $\solA\in \mcW$ and $\nonlin(\solA)\in \mcV$ with $\Tr\nonlin(\solA)-\nonlin(\solA^D)\leq0\ (\geq 0)$ a.e.\ in $\Gamma\times(0,T)$,  
		\item  identity \eqref{eq:SD-PME.solution.including.time-derivative.init.cond.id} holds with $=$ replaced by $\geq(\leq)$ for all test functions with $\testA(0)\geq 0$, and
		\item  identity \eqref{eq:SD-PME.solution.including.time-derivative.id} holds with $=$ replaced by $\leq(\geq)$ for all non-negative test functions $\testA$.
	\end{enumerate}
\end{definition}

The following two theorems imply the well-posedness of \eqref{eq:SD-PME.Problem}.
Observe that \Cref{thm:L1-contraction} also covers a comparison principle.
We use the notation $a_+=\max\{a,0\}$ and $a_-=(-a)_+$.

\begin{theorem}[$L^1$-contraction]\label{thm:L1-contraction}
	Suppose that $\nonlin$ satisfies \eqref{itm:nonlin.basic.assumption}.
	Let $\solA$ and $\solAvar$ be solutions of \eqref{eq:SD-PME.Problem} with reaction functions $\source$ and $\sourcevar$ and initial data $\solA_0$ and $\solAvar_0$, respectively, where $\source$ and $\sourcevar$ satisfy \eqref{itm:source.Lipschitz.assumption}
	and $\solA_0$ and $\solAvar_0$ satisfy \eqref{itm:initial.condtion}.
	Furthermore, we assume that both solutions satisfy the same Dirichlet data $\solA^D$ and \eqref{itm:Dirichlet.boundary.condtion} holds.
	Then,
	\begin{equation}\label{eq:L1-contraction}
		\norm{\solA(t)-\solAvar(t)}_{L^1(\Omega)}\leq e^{\LipBoundSource t}\left(\norm{\solA_0-\solAvar_0}_{L^1(\Omega)}+\int_0^t\norm{\source(\emptyarg,\solA)-\sourcevar(\emptyarg,\solA)}_{L^1(\Omega)}\right)
	\end{equation}
	for all $t\in [0,T]$, where $\LipBoundSource$ is the Lipschitz constant in \eqref{itm:source.Lipschitz.assumption}.
	
	Moreover, if $\solA$ is a subsolution and $\solAvar$  a supersolution, then 
	\begin{equation}\label{eq:L1-contraction.sub-supersolution}
		\norm{(\solA(t)-\solAvar(t))_+}_{L^1(\Omega)}\leq e^{\LipBoundSource t}\left(\norm{(\solA_0-\solAvar_0)_+}_{L^1(\Omega)}+\int_0^t\norm{\source(\emptyarg,\solB)-\sourcevar(\emptyarg,\solB)}_{L^1(\Omega)}\right)
	\end{equation}
	for almost every $t\in(0,T)$, where $\solB$ can be either $\solA$ or $\solAvar$.
\end{theorem}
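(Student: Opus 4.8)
The plan is to adapt Otto's doubling-of-the-time-variable technique \cite{Otto95} to the present setting, with the chain rule for the time derivative of \Cref{sect:Preliminaries} as the essential tool. The two assertions both reduce to a single differential inequality for $\int_\Omega(\solA-\solAvar)_+$, so I would focus on the sub-/supersolution estimate \eqref{eq:L1-contraction.sub-supersolution}; the full contraction \eqref{eq:L1-contraction} is then obtained by running the identical argument with a regularisation of the full signum $\mathrm{sign}(\solA-\solAvar)$ in place of $\mathrm{sign}_+$, which treats $\solA$ and $\solAvar$ symmetrically and keeps the reaction term at the single factor $\norm{\source(\emptyarg,\solA)-\sourcevar(\emptyarg,\solA)}_{L^1(\Omega)}$ appearing in \eqref{eq:L1-contraction}. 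Throughout, the decisive structural fact is that $\nonlin$ is strictly increasing by \eqref{itm:nonlin.basic.assumption}, so $\mathrm{sign}_+(\nonlin(\solA)-\nonlin(\solAvar))=\mathrm{sign}_+(\solA-\solAvar)$ pointwise.

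Concretely, I would fix a smooth nondecreasing regularisation $p_\delta$ of the Heaviside function with $p_\delta\equiv 0$ on $(-\infty,0]$, $p_\delta\equiv 1$ on $[\delta,\infty)$ and $p_\delta'\geq 0$. Introducing a second time variable $s$, for frozen $s$ the function $\eta(x,t)=p_\delta(\nonlin(\solA(x,t))-\nonlin(\solAvar(x,s)))$ serves as test function in the equation for $\solA$ (in $t$), and symmetrically in the equation for $\solAvar$ (in $s$). It is admissible: it is nonnegative, and since $\mathrm{Tr}\,\nonlin(\solA)\leq\nonlin(\solA^D)\leq\mathrm{Tr}\,\nonlin(\solAvar)$ a.e.\ on $\Gamma$ for a sub-/supersolution pair, its trace on $\Gamma$ vanishes, so it lies in ${V}$. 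Subtracting the supersolution inequality from the subsolution inequality, the combined diffusion contribution becomes $\int_\Omega p_\delta'(\nonlin(\solA)-\nonlin(\solAvar))\,\abs{\nabla(\nonlin(\solA)-\nonlin(\solAvar))}^2\geq 0$ and may be discarded, since I am after an upper bound.

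The heart of the proof, and the step I expect to be the main obstacle, is the time-derivative term. One cannot apply a chain rule directly to $\pinprod{(\solA-\solAvar)_t}{p_\delta(\nonlin(\solA)-\nonlin(\solAvar))}$, because no primitive $B(\solA,\solAvar)$ with $\partial_\solA B=-\partial_{\solAvar}B=p_\delta(\nonlin(\solA)-\nonlin(\solAvar))$ exists: equality of mixed partials would force $\nonlin'(\solA)=\nonlin'(\solAvar)$. This is precisely what necessitates the doubling: I keep $s$ frozen while differentiating in $t$ and vice versa, and apply the single-solution chain rule separately to each term, with primitive $B_\delta(\xi;\zeta):=\int_0^\xi p_\delta(\nonlin(\tau)-\nonlin(\zeta))\,\md\tau$. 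Coupling the two time variables by a mollifier $\rho_n(t-s)$ and a temporal cutoff, the two time terms combine; letting first $n\to\infty$ to collapse onto the diagonal $s=t$ and then $\delta\to 0$, the chain-rule contributions converge to the time-integrated derivative of $\int_\Omega(\solA(t)-\solAvar(t))_+$, while the $t=0$ boundary term produces $\int_\Omega(\solA_0-\solAvar_0)_+$ via the initial-condition identity \eqref{eq:SD-PME.solution.including.time-derivative.init.cond.id}. Justifying these two limit passages—in particular the equi-integrability required to pass $\delta\to 0$ in the nonlinear term and the identification of the diagonal limit—is the technically delicate part and will lean on the Bochner-space and Steklov-averaging results of \Cref{sect:Appendix}.

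In the same limit the reaction term converges to $\int_{\{\solA>\solAvar\}}(\source(\emptyarg,\solA)-\sourcevar(\emptyarg,\solAvar))$. Writing $\source(\emptyarg,\solA)-\sourcevar(\emptyarg,\solAvar)=(\source(\emptyarg,\solA)-\sourcevar(\emptyarg,\solA))+(\sourcevar(\emptyarg,\solA)-\sourcevar(\emptyarg,\solAvar))$ and using \eqref{itm:source.Lipschitz.assumption} on the second difference, this is bounded on $\{\solA>\solAvar\}$ by $\LipBoundSource(\solA-\solAvar)_++\abs{\source(\emptyarg,\solA)-\sourcevar(\emptyarg,\solA)}$, giving the variant $\solB=\solA$ (subtracting and adding $\source(\emptyarg,\solAvar)$ instead yields $\solB=\solAvar$). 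Collecting the estimates produces, for a.e.\ $t$, the integral inequality
\[
\int_\Omega(\solA(t)-\solAvar(t))_+\leq\int_\Omega(\solA_0-\solAvar_0)_++\int_0^t\Big(\LipBoundSource\int_\Omega(\solA-\solAvar)_++\norm{\source(\emptyarg,\solB)-\sourcevar(\emptyarg,\solB)}_{L^1(\Omega)}\Big),
\]
and Gronwall's inequality yields \eqref{eq:L1-contraction.sub-supersolution} with the factor $e^{\LipBoundSource t}$. As noted above, \eqref{eq:L1-contraction} follows from the symmetric variant of the argument, using that every solution is simultaneously a sub- and a supersolution.
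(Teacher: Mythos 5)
Your overall architecture --- doubling the time variable, regularising $\mathrm{sign}_+$ by $p_\delta$, applying the single-solution chain rule in each time variable with the other solution frozen, discarding the nonnegative diffusion term, splitting the reaction term two ways to get both choices of $\solB$, and closing with Gronwall --- is exactly the strategy of the paper (\Cref{lem_L1-contraction.double.time.estimate} together with the proof of \Cref{thm:L1-contraction}). However, one step fails as stated: the order of your two limit passages. You claim that ``the two time terms combine'' under the mollifier $\rho_n(t-s)$ and that one may let first $n\to\infty$ and then $\delta\to 0$. For fixed $\delta>0$ the two chain-rule primitives are \emph{different} functions: the term coming from the equation for $\solA$ carries $\int_\Omega B_\delta(\solA(t);\solAvar(s))$ with $B_\delta(z;\bar z)=\int_{\bar z}^{z} p_\delta(\nonlin(\tau)-\nonlin(\bar z))\,\md\tau$, while the term coming from the equation for $\solAvar$ carries $\int_\Omega \tilde B_\delta(\solAvar(s);\solA(t))$ with $\tilde B_\delta(z;\bar z)=\int_{z}^{\bar z} p_\delta(\nonlin(\bar z)-\nonlin(\tau))\,\md\tau$. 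After integration by parts against $\varphi_n(t,s)=\rho_n(t-s)\,\testC\bigl(\tfrac{t+s}{2}\bigr)$, the coefficient multiplying the antisymmetric part $\rho_n'(t-s)\testC$ is $\int_\Omega\bigl[B_\delta(\solA(t);\solAvar(s))-\tilde B_\delta(\solAvar(s);\solA(t))\bigr]$, which is \emph{not} zero for $\delta>0$. Since $\norm{\rho_n'}_{L^1(\IR)}\sim n$ and this coefficient has no time regularity beyond measurability (solutions lie only in $L^\infty(0,T;L^1(\Omega))$ with $V^*$-valued distributional time derivatives), this cross term is uncontrolled as $n\to\infty$ with $\delta$ fixed. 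The two primitives merge into the common limit $(\solA-\solAvar)_+$ only as $\delta\to 0$; this is precisely why the paper first sends $\delta\to 0$ for a fixed test function $\testC\in C^\infty_c((0,T)^2)$, obtaining the intermediate inequality \eqref{eq:L1-contraction.double.time.estimate}, and only afterwards inserts the diagonal mollifier and collapses $s=t$. Swapping your two limits repairs the argument and reduces it to the paper's proof.

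A secondary gloss concerns the initial data: in this weak framework it cannot simply be produced as a ``boundary term at $t=0$,'' because the admissible test functions in the doubled inequality vanish near $t=0$, and after the diagonal collapse the chain-rule reference function would be the time-dependent $\solAvar(s)$, for which the $\theta(0)$-statement of the chain rule does not directly apply. The paper handles the initial layer by a separate argument, \Cref{prop:sub.super.solution.initialdata} and \Cref{cor:sub.and.super.solution.initial.time.estimate}, which give $\lim_{t\to 0}\esssup_{(0,t)}\int_\Omega(\solA-\solAvar)_+\leq\int_\Omega(\solA_0-\solAvar_0)_+$ via approximation of $\solA_0$ by functions $\bar\solA_k$ with $\nonlin(\bar\solA_k)\in L^\infty(\Omega)\cap[\nonlin(\solA^D)+V]$; some argument of this kind is genuinely needed and should be made explicit in your plan.
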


\begin{theorem}[Well-posedness]\label{thm:Well-posedness}
	Suppose that \eqref{itm:nonlin.basic.assumption}, \eqref{itm:nonlin.blow-up.assumption},
	\eqref{itm:source.growth.condition},
	\eqref{itm:Dirichlet.boundary.condtion} and \eqref{itm:initial.condtion} hold. 
	Then, there exists a solution $\solA$ of \eqref{eq:SD-PME.Problem} 
	and $\solA$ satisfies the energy estimate
	\begin{equation}\label{eq:energy_estimate}
		\begin{aligned}
			&\ \norm{\Nonlin(\solA;\solA^D)}_{L^\infty(0,T;L^1(\Omega))}+\norm{\nabla\nonlin(\solA)}_{L^2(\Omega_T)}^2\\
			\leq&\ C \Bigl(\norm{\Nonlin(\solA_0;\solA^D)}_{L^1(\Omega)}+\norm{\nabla\nonlin(\solA^D)}_{L^2(\Omega_T)}+\norm{\source(\emptyarg,\solA)}_{L^2(\Omega_T)}\norm{\nonlin(\solA)-\nonlin(\solA^D)}_{L^2(\Omega_T)}\Bigr),
		\end{aligned}
	\end{equation}
	for some constant $C\geq 0$.
	In addition, suppose that \eqref{itm:source.Lipschitz.assumption} holds.
	Then, $\solA$ is the unique solution of \eqref{eq:SD-PME.Problem} and it depends continuously on the initial data $\solA_0$ via the $L^1$-contraction estimate \eqref{eq:L1-contraction}.
	
	Furthermore, suppose that $\source$ is also bounded.
	If $\nonlin(\solA_0)\in L^\infty(\Omega)$, then $\nonlin(\solA)$ is bounded by a constant depending on $\Omega$ and on the bounds of $\source$ and $\nonlin(\solA_0)$.
	If \eqref{itm:nonlin.convex.differentiable.assumption} holds, then $\solA\in C([0,T];L^1(\Omega))$.
	Finally, if both assumptions hold and $\nonlin(\solA_0)\in \nonlin(\solA^D)+{V}$, then we have $\nonlin(\solA)\in H^1(0,T;L^2(\Omega))$.
\end{theorem}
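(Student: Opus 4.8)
My plan is to establish the assertions in the order they are stated, leaning on the $L^1$-contraction of \Cref{thm:L1-contraction} and the chain rule proven in \Cref{sect:Preliminaries} as the two workhorses. For existence I would first substitute $w=\nonlin(\solA)$, $\beta:=\nonlin^{-1}$, so that \eqref{eq:SD-PME.Problem.DiffEqn} becomes the elliptic--parabolic equation $\partial_t\beta(w)=\Delta w+\source(\emptyarg,\beta(w))$ covered by \cite{Alt-Luck1983}: $\beta$ is monotone by \eqref{itm:nonlin.basic.assumption}, it maps $\IR$ onto $I$ by \eqref{itm:nonlin.blow-up.assumption}, and the linear diffusion is coercive on $V$ thanks to the Poincar\'e inequality on $V$. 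Since the reaction depends on the unknown, I would either verify that it enters as an admissible lower-order term or freeze its last argument and run a Schauder fixed point, using the growth bound \eqref{itm:source.growth.condition} to keep $\source(\emptyarg,\solA)$ in $L^2(0,T;L^2(\Omega))$ controlled by the energy; this produces a solution in the sense of \Cref{def:SD-PME.solution.including.time-derivative}. For the estimate \eqref{eq:energy_estimate} I would insert $\testA=\nonlin(\solA)-\nonlin(\solA^D)\in\mcV$ into \eqref{eq:SD-PME.solution.including.time-derivative.id}: the chain rule turns $\int_0^t\pinprod{\solA_t}{\nonlin(\solA)-\nonlin(\solA^D)}$ into the increment of the relative energy $\int_\Omega\Nonlin(\solA;\solA^D)$, the diffusion term produces $\norm{\nabla\nonlin(\solA)}_{L^2}^2$ after peeling off $\nabla\nonlin(\solA^D)$, and Cauchy--Schwarz on the reaction term accounts for the last product on the right.

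Uniqueness and the continuous dependence on $\solA_0$ are then immediate from \Cref{thm:L1-contraction} applied with $\source=\sourcevar$ to two solutions sharing $\solA_0,\solA^D$. For the $L^\infty$ bound on $\nonlin(\solA)$ when $\source$ is bounded and $\nonlin(\solA_0)\in L^\infty(\Omega)$, I would run a Stampacchia/De Giorgi truncation on $w=\nonlin(\solA)$: for $k\ge\norm{\nonlin(\solA^D)}_{L^\infty(\Omega)}$ the function $(w-k)_+$ has zero trace on $\Gamma$ and so is admissible in \eqref{eq:SD-PME.solution.including.time-derivative.id}; the chain rule converts the time term into the increment of a nonnegative functional supported on the super-level set, the diffusion term gives $\norm{\nabla(w-k)_+}_{L^2}^2$, and the bound on $\source$ controls the remainder. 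Iterating the resulting level-set inequality through the Sobolev embedding on $\Omega$ (whence the dependence on $\Omega$) forces $(w-k)_+\equiv0$ for $k$ large, and the symmetric argument bounds $w$ from below.

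For $\nonlin(\solA)\in H^1(0,T;L^2(\Omega))$ I would test with a time-difference quotient of $w$ built from the Steklov averages of \Cref{sect:Appendix}: the $L^\infty$ bound just obtained keeps $\solA$ in a compact subinterval of $I$ where, by \eqref{itm:nonlin.convex.differentiable.assumption}, $\nonlin'$ is bounded, so $\beta'(w)=1/\nonlin'(\beta(w))\ge c>0$ and the time term dominates $c\norm{\partial_t w}_{L^2}^2$; the compatibility $\nonlin(\solA_0)\in\nonlin(\solA^D)+V$ makes the initial Dirichlet energy finite and bounded $\source$ closes the estimate. The continuity $\solA\in C([0,T];L^1(\Omega))$ I would obtain by combining weak $L^1$-continuity in time---from $\solA_t\in L^2(0,T;V^*)$ together with the equi-integrability provided by the superlinear energy bound and de la Vall\'ee-Poussin---with continuity of $t\mapsto\int_\Omega\Nonlin(\solA(t);\solA^D)$ coming from the energy identity; the strict convexity of $\Nonlin$ available under \eqref{itm:nonlin.convex.differentiable.assumption} then promotes weak convergence with convergence of energies to strong $L^1$-convergence, in the spirit of Visintin's lemma. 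Alternatively, one may transfer a uniform-in-$\varepsilon$ $L^1$-modulus of continuity from the non-degenerate approximations.

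I expect the existence step to be the crux: fitting the singular equation into the Alt--Luck framework and, above all, passing to the limit in the nonlinear diffusion term, where weak convergence of $\nonlin(\solA_\varepsilon)$ must be upgraded to almost-everywhere convergence of $\solA_\varepsilon$ in order to identify $\nonlin$ of the limit. The companion subtlety, recurring in every testing step, is the justification of the integration by parts in time when $\solA_t$ lives only in $V^*$---precisely the role of the chain rule of \Cref{sect:Preliminaries}---while the strict-convexity argument underlying $C([0,T];L^1(\Omega))$ is the other delicate point.
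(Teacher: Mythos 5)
Your overall skeleton matches the paper's proof: existence by substituting $\solC=\nonlin(\solA)$ and invoking Theorem 1.7 of \cite{Alt-Luck1983} (the paper cites it directly, with \eqref{itm:source.growth.condition} serving precisely as the admissibility condition for the reaction term, so no separate Schauder iteration or limit passage in the diffusion term is needed on your side of the argument); the energy estimate by testing with $\nonlin(\solA)-\nonlin(\solA^D)$ and converting the time term via the chain rule; uniqueness and continuous dependence from \Cref{thm:L1-contraction}; and the $H^1(0,T;L^2(\Omega))$ regularity by testing the Steklov-averaged equation with $\partial_t\solC^h$ and using that $\nonlin'$ is bounded on the compact subinterval of $I$ where the bounded solution lives (the paper additionally passes through initial data in $H^2(\Omega)$ so that the extension to negative times with $\source=\Delta\solC_0$ makes sense, and then approximates; this intermediate step is missing from your sketch but is a technicality).

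The genuine problem is your $L^\infty$ bound. The paper does not use truncation at all: it solves the auxiliary elliptic problem \eqref{eq:aux.elliptic.problem.for.barrier.function} of \Cref{lem:aux.elliptic.problem} with $c_1=\norm{\source}_{L^\infty}$, $c_2\geq\norm{\nonlin(\solA_0)}_{L^\infty},\norm{\nonlin(\solA^D)}_{L^\infty}$, and observes that $\beta(\pm\solB)$ are time-independent super-/subsolutions, so the comparison principle contained in \Cref{thm:L1-contraction} gives $\beta(-\solB)\leq\solA\leq\beta(\solB)$. Your De Giorgi/Stampacchia iteration on $(\solC-k)_+$, by contrast, does not straightforwardly close in the singular case. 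The chain rule turns the time term into $\sup_t\int_\Omega\Psi^\star_k(\solA(t))$ with $\Psi^\star_k(z)=\int_{\beta(k)}^z(\nonlin(\tilde z)-k)_+\,\md\tilde z$, and when $\nonlin$ blows up at $\sup I$ this quantity is \emph{not} comparable to $\norm{(\solC-k)_+(t)}_{L^2(\Omega)}^2$ (one has $\Psi^\star_k(\beta(\omega))=\int_k^\omega(\zeta-k)\beta'(\zeta)\,\md\zeta$ with $\beta'\to 0$ near the singularity), so you never obtain the $L^\infty$-in-time control of truncations that the parabolic embedding $L^\infty(L^2)\cap L^2(H^1)\hookrightarrow L^{2(N+2)/N}$ requires. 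With only the spatial gradient term and Sobolev embedding, the resulting level-set inequality decays linearly in $\abs{A_k}$, not superlinearly, and the iteration does not terminate. This step needs to be replaced, e.g.\ by the paper's barrier argument.

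Your route to $\solA\in C([0,T];L^1(\Omega))$ is genuinely different from the paper's and is plausible, but has a gap you should close: the chain rule only provides an absolutely continuous \emph{representative} $\theta$ of $t\mapsto\int_\Omega\Nonlin(\solA(t);\solA^D)$, equal to the pointwise energy a.e.; Visintin's strict-convexity lemma needs weak convergence $\solA(t_n)\rightharpoonup\solA(t)$ \emph{together with} convergence of energies to the energy of the limit, and at exceptional times $t$ weak lower semicontinuity only gives $\int_\Omega\Nonlin(\solA(t);\solA^D)\leq\theta(t)$, possibly strictly, so strong convergence at such $t$ does not follow without an extra argument. The paper avoids this entirely: under \eqref{itm:nonlin.convex.differentiable.assumption} it either uses $\nonlin'\geq\alpha>0$ (then $\solA-\solA^D\in\mcV$, $\solA_t\in L^2(0,T;V^*)$ give $\solA\in C([0,T];L^2(\Omega))$ by standard theory), or, in the degenerate case, transfers the continuity of $\solC\in C([0,T];L^2(\Omega))$ (available for bounded, compatible initial data from the $H^1(0,T;L^2)$ step) to $\solA$ via Jensen's inequality and the super-additivity estimate \eqref{eq:superadditivity.in.reverse.triangele.ineq.}, and then handles general $\solA_0\in L^1(\Omega)$ by approximation, using that \eqref{eq:L1-contraction} makes the approximate solutions converge in $L^\infty(0,T;L^1(\Omega))$, of which $C([0,T];L^1(\Omega))$ is a closed subspace. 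Also note your fallback suggestion (a modulus of continuity inherited from non-degenerate approximations) does not fit this paper's framework, since existence here is not obtained through such approximations.
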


\begin{remark}[Energy estimate]\label{rem:energy_uniform_bound}
	We formulate estimate \eqref{eq:energy_estimate} in terms of the relative energy.
	In \cite{Alt-Luck1983}, an energy estimate for the absolute energy of the form
	\begin{equation}\label{eq:energy_uniform_bound}
		\norm{\Nonlin(\solA)}_{L^\infty(0,T;L^1(\Omega))}+\norm{\nabla\nonlin(\solA)}_{L^2(\Omega_T)}^2 \leq C,
	\end{equation}
	for some $C\geq 0,$ is established and it is a key ingredient in their analysis. 
	The estimate \eqref{eq:energy_uniform_bound} can be obtained from \eqref{eq:energy_estimate}.
	Indeed, by observing that
	\[
	\Nonlin(\solA;\solA^D)=\Nonlin(\solA)-\Nonlin(\solA^D)-\nonlin(\solA^D)(\solA-\solA^D)
	\]
	and using \eqref{eq:estimate.by.energy.functional} with $\delta=\frac{1}{2}\norm{\nonlin(\solA^D)}_{L^\infty(\Omega)}^{-1}$ to estimate $\abs{\nonlin(\solA^D)}\abs{\solA}\leq \frac{1}{2}\Nonlin(\solA)+C$ we find the inequality
	\[
	\Nonlin(\solA)\leq C\left(\Nonlin(\solA;\solA^D)+1\right).
	\]
	The reaction term in \eqref{eq:energy_estimate} can be treated in a standard way, i.e.\ we use Young's inequality and Poincar\'e's inequality to estimate the term by
	\[
	C\left(\norm{\source(\emptyarg,\solA)}_{L^2(\Omega_T)}^2+1\right)+\frac{1}{2}\norm{\nabla\nonlin(\solA)}^2_{L^2(\Omega)}.
	\]
	The second term is absorbed in the left-hand side of \eqref{eq:energy_estimate} and \eqref{itm:source.growth.condition} is applied to the first term.
	In this way we infer from \eqref{eq:energy_estimate} an estimate of the form
	\[
	\norm{\Nonlin(\solA(T))}_{L^1(\Omega)}+\norm{\nabla\nonlin(\solA)}_{L^2(\Omega_T)}^2\leq C\left(\int_0^T\norm{\Nonlin(\solA(t))}_{L^1(\Omega)}\md t+1\right).
	\]
	An application of Gronwall's Lemma shows that \eqref{eq:energy_uniform_bound} holds.
\end{remark}

Theorems \ref{thm:L1-contraction} and \ref{thm:Well-posedness} provide the basis for proving the well-posedness of the coupled system \eqref{eq:SD-PME.coupled.Problem}.
Let $\Gamma_1,\Gamma_2\subseteq\partial\Omega$ be measurable with 
Hausdorff measure $H^{N-1}(\Gamma_i)>0$, $i=1,2.$
In this case, for simplicity and motivated by applications, where solutions describe densities or concentrations,
we assume that $I=[0,1)$. 
For the reaction functions we assume the following. 

\begin{enumerate}[label=\upshape(R\arabic*),ref=\upshape R\arabic*]
	\setcounter{enumi}{\value{counterHypothesesReactionTerms}}
	\item\label{itm:source.Lipschitz.assumption.coupled} 
	The functions ${f},{g}:\Omega_T\times [0,1)\times[0,1]\to\IR$ are measurable and uniformly Lipschitz continuous with respect to the last argument, that is, there exists a $\LipBoundSource\geq 0$ such that
	\begin{align*}
		&\norm{{f}(\emptyarg,u_1,v_1)-{f}(\emptyarg,u_2,v_2)}_{L^\infty(\Omega_T)}+\norm{{g}(\emptyarg,u_1,v_1)-{g}(\emptyarg,u_2,v_2)}_{L^\infty(\Omega_T)}\\
		\qquad& \leq \LipBoundSource(\abs{u_1-u_2}+\abs{v_1-v_2})
	\end{align*}
	for all $u_1,u_2\in [0,1)$ and $v_1,v_2\in[0,1]$.
	\item\label{itm:source.positivity.coupled} Moreover, we assume that ${f}(\emptyarg,0,\solB)\geq 0$, ${g}(\emptyarg,\solA,0)\geq 0$ and ${g}(\emptyarg,\solA,1)\leq 1$ a.e.\ in $\Omega_T$, for all $\solA\in[0,1)$ and $\solB\in[0,1]$.
\end{enumerate}
Condition \eqref{itm:source.positivity.coupled} ensures that solutions remain non-negative and that $v$ is bounded by $1$.
The non-negativity of the solutions also validates the choice of the interval ${I}=[0,1)$, which is not an open interval of $\IR$ as assumed previously.
However, we can simply extent the structural functions to the open interval $(-1,1)$ in an appropriate manner and apply the well-posedness theory for \eqref{eq:SD-PME.Problem}, which then yields solutions taking values in $[0,1)$.

As above, we introduce the spaces
\[
{V}_i:=\left\{\solC\in H^1(\Omega)\mid \Tr\solC =0\ \text{a.e.\ on}\ \Gamma_i\right\},\quad i=1,2,
\]
and define $\mcV_i$ and $\mcW_i$, $i=1,2$, in an obvious manner. Solutions of system \eqref{eq:SD-PME.coupled.Problem} are analogously defined as in \Cref{def:SD-PME.solution.including.time-derivative} for problem \eqref{eq:SD-PME.Problem}.

\begin{theorem}[Well-posedness of the coupled system]\label{thm:Well-posedness.biofilm}
	Suppose that $\nonlin:[0,1)\to[0,\infty)$ satisfies \eqref{itm:nonlin.basic.assumption} and \eqref{itm:nonlin.blow-up.assumption} and ${f},{g}:\Omega_T\times[0,1)\times[0,1]\to \IR$ satisfy \eqref{itm:source.Lipschitz.assumption.coupled} and \eqref{itm:source.positivity.coupled}. Moreover, the boundary data $\solA^D:\Omega\to[0,1)$ satisfies \eqref{itm:Dirichlet.boundary.condtion}, $\solB^D:\Omega\to[0,1]$ satisfies $\solB^D\in H^1(\Omega)$ and the initial data  $\solA_0$ satisfies \eqref{itm:initial.condtion} and $\solB_0:\Omega\to[0,1]$ is measurable.
	Then, there exists a unique solution $(\solA,\solB):\Omega\to [0,1)\times[0,1]$ of \eqref{eq:SD-PME.coupled.Problem} in $L^\infty(0,T;L^1(\Omega))\times C([0,T];L^1(\Omega))$, the solution satisfies 
	a $L^1$-contraction estimate and an energy inequality analogous to \eqref{eq:L1-contraction} and \eqref{eq:energy_estimate}, respectively.
	Furthermore, if $\nonlin$ satisfies \eqref{itm:nonlin.convex.differentiable.assumption}, then $(\solA,\solB)\in C([0,T];L^1(\Omega)\times L^1(\Omega))$.
	
	Finally, if $\solA_0\leq 1-\theta$ for some $\theta\in(0,1)$, then $\solA\leq 1-\mu$ for some $\mu\in(0,1)$ depending on $\Omega$, $\LipBoundSource$ and $\nonlin(1-\theta)$.
	In this case, if $\nonlin(\solA_0)\in \nonlin(\solA^D)+{V}$ and $\nonlin$ satisfies \eqref{itm:nonlin.convex.differentiable.assumption}, then $\nonlin(\solA)\in H^1(0,T;L^2(\Omega))$.
\end{theorem}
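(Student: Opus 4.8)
The plan is to decouple \eqref{eq:SD-PME.coupled.Problem} and run a Banach fixed point argument, using the scalar results \Cref{thm:L1-contraction,thm:Well-posedness} as black boxes. Since the scalar theory is stated for an open interval, I would first extend $\nonlin$ monotonically from $[0,1)$ to $(-1,1)$ and extend $f,g$ to functions on $\Omega_T\times(-1,1)\times\IR$ that are globally bounded and Lipschitz in their last two arguments; the sign conditions \eqref{itm:source.positivity.coupled} will force the constructed solution back into $[0,1)\times[0,1]$, so the extension is immaterial. Fix $\tau\in(0,T]$ and work on the complete metric space $Y=\{(\bar u,\bar v)\in L^\infty(0,\tau;L^1(\Omega))^2 : 0\le\bar u<1,\ 0\le\bar v\le1\ \text{a.e.}\}$ equipped with $\|(\bar u,\bar v)\|_Y=\sup_{t}(\|\bar u(t)\|_{L^1(\Omega)}+\|\bar v(t)\|_{L^1(\Omega)})$. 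I define $\mcT(\bar u,\bar v)=(u,v)$, where $u$ solves the scalar problem $u_t=\Delta\nonlin(u)+f(\emptyarg,u,\bar v)$ with data $u_0,u^D$, and $v$ solves the semilinear problem $v_t=\Delta v+g(\emptyarg,\bar u,v)$ (the case $\nonlin=\mathrm{id}$, $I=\IR$) with data $v_0,v^D$. The frozen reactions $f(\emptyarg,\cdot,\bar v)$ and $g(\emptyarg,\bar u,\cdot)$ inherit \eqref{itm:source.Lipschitz.assumption} from \eqref{itm:source.Lipschitz.assumption.coupled}, and being bounded they satisfy \eqref{itm:source.growth.condition}, so \Cref{thm:Well-posedness} makes $\mcT$ well-defined.

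I would then check that $\mcT$ maps $Y$ into itself and contracts. The comparison principle contained in \Cref{thm:L1-contraction} shows that the constant $0$ is a subsolution of the $u$-equation because $f(\emptyarg,0,\bar v)\ge0$, whence $u\ge0$, while $u<1$ holds automatically since solutions take values in $I$; applied to the constants $0$ and $1$ for the $v$-equation through $g(\emptyarg,\bar u,0)\ge0$ and the upper sign condition, the same comparison yields $0\le v\le1$. For the contraction, take inputs $(\bar u_i,\bar v_i)$ with images $(u_i,v_i)$. As $u_1,u_2$ share the data $u_0,u^D$, estimate \eqref{eq:L1-contraction} combined with \eqref{itm:source.Lipschitz.assumption.coupled} gives $\|u_1(t)-u_2(t)\|_{L^1(\Omega)}\le e^{\LipBoundSource\tau}\LipBoundSource\int_0^t\|\bar v_1-\bar v_2\|_{L^1(\Omega)}$, and symmetrically for $v_1-v_2$ in terms of $\bar u_1-\bar u_2$. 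Summing and taking the supremum over $[0,\tau]$ bounds the Lipschitz constant of $\mcT$ by $e^{\LipBoundSource\tau}\LipBoundSource\tau$, which is $<1$ once $\tau$ is small — a threshold independent of the data. Banach's theorem then gives a unique fixed point $(u,v)\in Y$, the solution on $[0,\tau]$; because $\tau$ is uniform, I restart from $(u(\tau),v(\tau))$ and splice finitely many steps to reach $[0,T]$.

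The remaining assertions follow by feeding the fixed point back into the scalar theory. The coupled $L^1$-contraction and energy inequality are obtained by writing the estimates of \Cref{thm:L1-contraction,thm:Well-posedness} for each equation, bounding the cross terms $\|f(\emptyarg,u,v_1)-f(\emptyarg,u,v_2)\|_{L^1(\Omega)}$ and $\|g(\emptyarg,u_1,v)-g(\emptyarg,u_2,v)\|_{L^1(\Omega)}$ through \eqref{itm:source.Lipschitz.assumption.coupled}, and closing with Gronwall's lemma. If \eqref{itm:nonlin.convex.differentiable.assumption} holds, the continuity clause of \Cref{thm:Well-posedness} gives $u\in C([0,T];L^1(\Omega))$, and since the semilinear component always lies in $C([0,T];L^1(\Omega))$ we obtain $(u,v)\in C([0,T];L^1(\Omega)\times L^1(\Omega))$. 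If $u_0\le1-\theta$, then $\nonlin(u_0)\le\nonlin(1-\theta)\in L^\infty(\Omega)$, and as the Lipschitz bound makes $f$ uniformly bounded on $[0,1)\times[0,1]$, the boundedness part of \Cref{thm:Well-posedness} yields $\nonlin(u)\le C$, i.e.\ $u\le\nonlin^{-1}(C)=1-\mu<1$; under the extra hypotheses $\nonlin(u_0)\in\nonlin(u^D)+V$ and \eqref{itm:nonlin.convex.differentiable.assumption}, the last clause of \Cref{thm:Well-posedness} finally gives $\nonlin(u)\in H^1(0,T;L^2(\Omega))$.

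The main obstacle I anticipate is the self-mapping step rather than the contraction: one must verify that the constants $0$ and $1$ are genuine sub- and supersolutions in the weak sense of \Cref{def:SD-PME.sub-super.solution.including.time-derivative} for the frozen problems, so that \Cref{thm:L1-contraction} applies and pins the ranges to $[0,1)$ and $[0,1]$, and one must ensure that the splicing times carry finite relative energy $\Nonlin(u(\tau);u^D)\in L^1(\Omega)$ so that the hypotheses of \Cref{thm:Well-posedness} are available at each restart.
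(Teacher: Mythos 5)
Your strategy coincides with the paper's (extend $\nonlin$ and $f,g$, decouple, contract in the $L^\infty(0,T;L^1)$-norm with constant $\LipBoundSource \tau e^{\LipBoundSource \tau}$ via \Cref{thm:L1-contraction}, pin the ranges with the comparison principle, splice in time), and your contraction estimate, self-map verification and treatment of the final assertions are all sound; whether one uses your simultaneous map $\mcT(\bar u,\bar v)=(u,v)$ or the paper's composition $\mcT_2\circ\mcT_1$ is immaterial. The genuine gap is your claim that $Y=\{(\bar u,\bar v):0\le \bar u<1,\ 0\le\bar v\le 1\ \text{a.e.}\}$ is a \emph{complete} metric space. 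A subset of a Banach space is complete precisely when it is closed, and the constraint $\bar u<1$ a.e.\ is not closed under $L^\infty(0,\tau;L^1(\Omega))$-convergence: the constants $\bar u_n\equiv 1-\tfrac1n$ lie in $Y$ and converge to the constant function $1$, which does not. A contraction on an incomplete space need not have a fixed point, so Banach's theorem cannot be invoked as stated. This is not a pedantic point; it is exactly the difficulty that shapes the paper's proof. The paper's fixed-point set $\mcX$ (which encodes solution regularity instead of range constraints) is likewise not complete in this norm, so the paper passes to the completion $\overline{\mcX}$, obtains the fixed point there, and then must spend the second half of its proof — the uniform energy bounds \eqref{eq:energy_estimate_coupled_uniform_bound.1}--\eqref{eq:energy_estimate_coupled_uniform_bound.2} for the iterates, weak compactness in $L^2(0,T;H^1(\Omega))$, uniform $L^2(0,T;V_i^*)$ bounds on the time derivatives, and Mazur's lemma — showing that the limit actually lies in $\mcX\times\mcY$, i.e.\ is a solution.

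Your gap, however, admits a much cheaper repair than the paper's, and the repaired argument is leaner. Replace $Y$ by its closure (constraints $0\le\bar u\le1$, $0\le\bar v\le1$), and extend $g$ continuously to the frozen value $\bar u=1$ (the limit $\lim_{z\nearrow1}g(\emptyarg,z,\emptyarg)$ exists by \eqref{itm:source.Lipschitz.assumption.coupled}, and the sign conditions \eqref{itm:source.positivity.coupled} persist by continuity); the slot $\bar v\in[0,1]$ is closed already. Then $\mcT$ is a contraction on a genuinely complete space, and it maps the closure back into $Y$ itself: the first component of any output is a solution of the scalar problem, hence takes values in $I=(-1,1)$, so $u<1$ a.e., and $u\ge0$, $0\le v\le1$ follow from the comparison contained in \Cref{thm:L1-contraction} exactly as you argue. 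The payoff of your choice of space then materializes: the fixed point satisfies $(u,v)=\mcT(u,v)$, and since $\mcT$ outputs solutions in the sense of \Cref{def:SD-PME.solution.including.time-derivative}, the pair automatically carries the full solution regularity — no compactness, no Mazur's lemma, unlike the paper. Your two flagged worries are real but resolvable: the sub/supersolution property of the constants $0$ and $1$ for the frozen problems is immediate from \eqref{itm:source.positivity.coupled} (note the constants trivially satisfy item \textit{(i)} of the sub/supersolution definition because $\nonlin(\solA^D)\ge0$ and $\solB^D\le1$), and the finite relative energy at the splicing times is supplied by \Cref{cor:time-regularity.energy.functional}, which gives $\Nonlin(\solA;\solA^D)\in L^\infty(0,T;L^1(\Omega))$.
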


The last property in Theorem \ref{thm:Well-posedness.biofilm} is crucial in modeling applications. 
In fact, the singularity in the primitive of the diffusion coefficient ensures that the biomass density remains bounded by a constant strictly less than one.
Furthermore, we provided the additional regularity $\solA\in C([0,T];L^1(\Omega))$ so that our well-posedness theory agrees with the results in \cite{Efendiev2009}.
In this reference the continuity is needed to apply the theory of infinite dimensional dynamical systems.

\section{Preliminaries}\label{sect:Preliminaries}

The aim of this section is to derive a chain rule for the time derivative of the scalar equation \eqref{eq:SD-PME.Problem} and to prove some related properties.
To this end we introduce Steklov averages and an appropriate functional setting.
The functional spaces are necessary to cover the equation in full generality.
For instance, suppose we restrict the problem to the biofilm model \eqref{eq:Biofilm} and consider 
$\nonlin:{I}\to\IR$ with a bounded interval ${I}$.
Then, $\solA_0$ and $\solA$ are bounded functions and the space $\mcW$ can be replaced by $H^1(0,T;{V}^*)$, since $\solA_0\in L^2(\Omega)\subseteq {V}^*$.
This simplifies the theory significantly.
A similar argument can be made in the semi-linear case.
However, in the general case, $\solA_0\in L^1(\Omega)$, $\solA\in L^\infty(0,T;L^1(\Omega))$ and the space $L^1(\Omega)$ is not related to ${V}^*$ in a natural manner and hence, 
introducing the space $\mcW$ is necessary.

\subsection*{Analysis on Bochner spaces and Steklov averaging}

\begin{definition}[Steklov averaging]\label{def:Steklov.average}
	Let $X$ be a Banach space, $\solA\in L^1_\loc(\IR;X)$ and $h>0$.
	We define the \emph{(backward) Steklov average} $\solA^h:\IR\to X$ by
	\[
	\solA^h(t):=\frac{1}{h}\int_{t-h}^t \solA(s)\md s
	\]
	for all $t\in\IR$.
\end{definition}

\begin{remark}\label{rem:Steklov.average.extent.functions}
	We interpret $L^1(0,T;X)\subset L^1_\loc(\IR;X)$ by setting $\solA\equiv0$ on 
	$\mathbb{R}\setminus[0,T]$.
\end{remark}

Several properties of Bochner spaces and Steklov averages that we will use in the sequel are shown in Appendix \ref{sect:Appendix}.

\begin{remark}\label{rem:embedded.spaces.and.Bochner.and.Steklov}
	Suppose $X$ and $Y$ are Banach spaces such that $X$ is continuously embedded in $Y$.
	Applying Hille's Theorem, see Theorem 1.2.4 
	in \cite{Hyt-Neerv-Ver-Weis2016}, to the inclusion map shows that the Bochner integrals with respect to $X$ and with respect to $Y$ coincide for $X$-valued functions.
	In particular, if $\solA \in L^1_\loc(\IR;X)\cap L^1_\loc(\IR;Y)$, then $\solA^h$ is unambiguously defined.
\end{remark}

\begin{remark}[Point-wise value of the Steklov average]\label{rem:Steklov.point-wise.value}
	If the embedding $X\hookrightarrow L^1(\Omega)$ is continuous, then \Cref{rem:embedded.spaces.and.Bochner.and.Steklov} implies that, for any $\solA\in L^1(\Omega_T)$,
	\[
	\solA^h(x,t)=\frac{1}{h}\int_{t-h}^t\solA(x,s)\md s
	\]
	for all $t\in[0,T]$ and almost every $x\in\Omega$.
	Indeed, using Fubini's Theorem one can check that $L^1(0,T;L^1(\Omega))\cong L^1(\Omega_T)$, where the isometric isomorphism is given by $\solA(x,t)=[\solA(t)](x)$.
	Then, the function $\solB\in L^1(\Omega)$ defined by $\solB=\int_0^T\solA(t)\md t$, for some $\solA\in L^1(\Omega_T)$, satisfies $\int_\Omega\solB=\iint_{\Omega_T}\solA$ and $\solB(x)=\int_0^T\solA(x,t)\md t$ for almost every $x\in\Omega$.
	The identity then follows as a special case.
\end{remark}

In view of \Cref{def:SD-PME.solution.including.time-derivative}, we review some properties of the relevant functional spaces.
Write $Y:=L^\infty(\Omega)\cap {V}$ and note that $L^1(\Omega)\hookrightarrow L^\infty(\Omega)^*\subset Y^*$ and $V^*\subset Y^*$.
This structure allows us to define $L^1(\Omega)\cap {V}^*$ and we have the following commutative diagram 
\[
\begin{tikzcd}
L^1(\Omega)\arrow[r, hook] 	& L^\infty(\Omega)^* \arrow[r, hook] &Y^*\\
L^1(\Omega)\cap {V}^* \arrow[u, hook] \arrow[rr, hook]& 		&{V}^*.\arrow[u, hook]
\end{tikzcd}
\]
The continuous embedding into ${Y}^*$ implies that a Cauchy sequence in $L^1(\Omega)\cap {V}^*$ has the same limit with respect to the topologies of both, $L^1(\Omega)$ and ${V}^*$.
We conclude that
the normed vector space $L^1(\Omega)\cap{V}^*$ is complete and for any $\solA\in L^1(\Omega)\cap{V}^*$ we have 
\begin{equation}\label{eq:interplay.L1-V*}
\pinprod{\solA}{\testA}=\int_{\Omega}\solA\testA\quad\text{for all}\ \testA\in L^\infty(\Omega)\cap {V}.
\end{equation}

Either expression in \eqref{eq:interplay.L1-V*} describes the $(Y^*,Y)$ pairing and therefore, we denote the $(Y^*,Y)$ pairing by $\pinprod{\emptyarg}{\emptyarg}$ as well.
Let $\solA\in\mcW$ and observe that $\solA,\solA_t\in L^2(0,T;Y^*)$.
From \eqref{eq:solution.space.time.derivative} it follows that 
\[
\int_0^T\testB\solA_t=-\int_0^T\testB_t\solA\quad\text{in}\ Y^*\ \text{for all $\testB\in C^\infty_c((0,T))$.}
\]
Indeed, let $\testC\in Y$ and consider $\testA(x,t)=\testB(t)\testC(x)$, where $x\in\Omega$, $t\in[0,T]$, to conclude the identity.
Therefore, $\solA_t$ is the weak derivative of $\solA$ as $Y^*$-valued functions.
It follows that $\mcW\subseteq W^{1,2}(0,T;Y^*)$.
In particular, $\solA\in C([0,T];Y^*)$ and therefore $\solA(0)\in {Y}^*$ is well-defined.

In the well-posedness proof we will use the following 
equivalent formulation for the initial data.  
\begin{lemma}
	\label{lem:initial.cond.alternative}
	Suppose $\solA\in \mcW$ and $\solA_0\in L^1(\Omega)$, then \eqref{eq:SD-PME.solution.including.time-derivative.init.cond.id} holds if and only if
	\begin{equation}\label{eq:initial.cond.alternative}
	\pinprod{\solA(0)}{\testA}=\int_{\Omega}\solA_0\testA\quad\text{for all}\ \testA\in L^\infty(\Omega)\cap {V}.
	\end{equation}
	The statement still holds if we replace $=$ by $\geq$ ($\leq$) in \eqref{eq:SD-PME.solution.including.time-derivative.init.cond.id} for $\testA(0)\geq 0$ and $=$ by $\leq$ ($\geq$) in \eqref{eq:initial.cond.alternative} with $\testA\geq 0$.
\end{lemma}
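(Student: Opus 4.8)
The plan is to collapse both the weak initial condition \eqref{eq:SD-PME.solution.including.time-derivative.init.cond.id} and the pointwise identity \eqref{eq:initial.cond.alternative} into a single statement about the ``boundary term at $t=0$'', and then play the two against each other. The starting point is an integration-by-parts identity read off directly from the definition of $\solA_t$. Given an admissible test function $\testA\in\mcV\cap W^{1,1}(0,T;L^\infty(\Omega))$ with $\testA(T)=0$ and $\varepsilon\in(0,T)$, I would set $\lambda_\varepsilon(t):=\min(t/\varepsilon,1)$ and apply \eqref{eq:solution.space.time.derivative} to $\lambda_\varepsilon\testA$, which vanishes at both endpoints. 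Since $\lambda_\varepsilon'=\tfrac1\varepsilon\mathbf 1_{(0,\varepsilon)}$, this gives
\[
\int_0^T\lambda_\varepsilon\,\pinprod{\solA_t}{\testA}+\int_0^T\lambda_\varepsilon\int_\Omega\solA\,\testA_t=-\frac1\varepsilon\int_0^\varepsilon\int_\Omega\solA\,\testA.
\]
Dominated convergence handles the left-hand side as $\varepsilon\to0$ (the integrands are bounded using $\solA_t\in L^2(0,T;V^*)$, $\testA\in\mcV$ and $\solA\in L^\infty(0,T;L^1(\Omega))$, $\testA_t\in L^1(0,T;L^\infty(\Omega))$), so the limit $L(\testA):=\lim_{\varepsilon\to0}\tfrac1\varepsilon\int_0^\varepsilon\int_\Omega\solA\,\testA$ exists and equals $-\bigl(\int_0^T\pinprod{\solA_t}{\testA}+\int_0^T\int_\Omega\solA\,\testA_t\bigr)$. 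Combining this with the elementary identity $\int_0^T\int_\Omega\solA_0\testA_t=-\int_\Omega\solA_0\testA(0)$ (Fubini and the fundamental theorem of calculus in $W^{1,1}(0,T;L^\infty(\Omega))$), the initial condition \eqref{eq:SD-PME.solution.including.time-derivative.init.cond.id} for this $\testA$ is seen to be \emph{equivalent} to $L(\testA)=\int_\Omega\solA_0\testA(0)$.

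Next I would identify $L(\testA)$. Writing $S_\varepsilon:=\tfrac1\varepsilon\int_0^\varepsilon\solA(t)\,\md t\in L^1(\Omega)$, one has $\norm{S_\varepsilon}_{L^1(\Omega)}\le M:=\norm{\solA}_{L^\infty(0,T;L^1(\Omega))}$, and $S_\varepsilon\to\solA(0)$ in $Y^*$ because $\solA\in C([0,T];Y^*)$ and the Bochner integrals over $L^1(\Omega)$ and $Y^*$ agree (cf.\ \Cref{rem:embedded.spaces.and.Bochner.and.Steklov}). Since $\testA\in C([0,T];L^\infty(\Omega))$, replacing $\testA(t)$ by $\testA(0)$ inside the average costs at most $M\sup_{[0,\varepsilon]}\norm{\testA(t)-\testA(0)}_{L^\infty(\Omega)}\to0$, so $L(\testA)=\lim_{\varepsilon\to0}\int_\Omega S_\varepsilon\testA(0)$. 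When $\testA(0)\in Y$ this is $\pinprod{S_\varepsilon}{\testA(0)}\to\pinprod{\solA(0)}{\testA(0)}$ by \eqref{eq:interplay.L1-V*}, hence $L(\testA)=\pinprod{\solA(0)}{\testA(0)}$. This already settles the direction \eqref{eq:SD-PME.solution.including.time-derivative.init.cond.id}$\Rightarrow$\eqref{eq:initial.cond.alternative}: for any $\psi\in Y$ the tensor function $\testA(t)=(1-t/T)\psi$ is admissible with $\testA(0)=\psi\in Y$, so the equivalence above yields $\pinprod{\solA(0)}{\psi}=\int_\Omega\solA_0\psi$.

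The converse \eqref{eq:initial.cond.alternative}$\Rightarrow$\eqref{eq:SD-PME.solution.including.time-derivative.init.cond.id} is where I expect the main obstacle. Under \eqref{eq:initial.cond.alternative} we have $\solA(0)=\solA_0$ in $Y^*$, and I must verify $L(\testA)=\int_\Omega\solA_0\testA(0)$ for \emph{every} admissible $\testA$, including those whose trace $\testA(0)$ lies in $L^\infty(\Omega)$ but not in $V$ — so that $\pinprod{\solA(0)}{\testA(0)}$ is a priori undefined and the clean identification of $L(\testA)$ above is unavailable. The difficulty is exactly that $S_\varepsilon\to\solA_0$ only in $Y^*$ while $\testA(0)$ is merely an $L^\infty$-function. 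The resolution I would use exploits the uniform bound $\norm{S_\varepsilon-\solA_0}_{L^1(\Omega)}\le 2M$ together with the fact that $\testA(0)$ is an $L^\infty$-limit of \emph{admissible} values: for a.e.\ small $t$ one has $\testA(t)\in Y$, so given $\delta>0$ I pick $t_\delta$ with $\testA(t_\delta)\in Y$ and $\norm{\testA(t_\delta)-\testA(0)}_{L^\infty(\Omega)}<\delta$. Splitting
\[
\int_\Omega(S_\varepsilon-\solA_0)\testA(0)=\pinprod{S_\varepsilon-\solA_0}{\testA(t_\delta)}+\int_\Omega(S_\varepsilon-\solA_0)\bigl(\testA(0)-\testA(t_\delta)\bigr),
\]
the first term tends to $0$ as $\varepsilon\to0$ (by $Y^*$-convergence, since $\testA(t_\delta)\in Y$) while the second is bounded by $2M\delta$; letting $\delta\to0$ gives $L(\testA)=\int_\Omega\solA_0\testA(0)$, which is what is needed.

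Everything else is routine: the cutoff identity, the Fubini/fundamental-theorem computation, and the surjectivity of $\testA\mapsto\testA(0)$ onto $Y$ realized by tensor functions. The sub-/supersolution version follows along the same lines, the only adjustment being in the converse: to keep the approximating traces both in $V$ and nonnegative, I replace $\testA(t_\delta)$ by its positive part $\testA(t_\delta)_+$, which still lies in $Y$ (as $\Tr$ commutes with $(\emptyarg)_+$ on $V$) and is $L^\infty$-close to $\testA(0)\ge0$, so that the one-sided inequalities $\pinprod{\solA(0)}{\psi}\le\int_\Omega\solA_0\psi$ (respectively $\ge$) for nonnegative $\psi\in Y$ propagate to the corresponding inequality in \eqref{eq:SD-PME.solution.including.time-derivative.init.cond.id}. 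I expect this $L^\infty$-approximation step — reconciling the weak $Y^*$-convergence of the Steklov averages with test-function traces that need not lie in $V$ — to be the crux of the whole argument.
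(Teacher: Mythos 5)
Your proof is correct, and it agrees with the paper's in the forward direction (both evaluate the weak identity on tensor test functions $(T-t)\psi$ with $\psi\in L^\infty(\Omega)\cap V$), but your converse takes a genuinely different route. Writing $Y=L^\infty(\Omega)\cap V$, the paper first proves the converse for test functions in $C^\infty([0,T];Y)$ by differentiating $t\mapsto\pinprod{\solA(t)}{\testA(t)}$ (product rule plus the Fundamental Theorem of Calculus in $W^{1,2}(0,T;Y^*)$), and then invokes mollification in time to pass to all admissible test functions. You instead keep the test function fixed, localize at $t=0$ with the cutoff $\lambda_\varepsilon$, and confront directly the point that the paper's mollification step quietly absorbs: the trace $\testA(0)$ of an admissible test function lies in $L^\infty(\Omega)$ but need not lie in $V$, so $\pinprod{\solA(0)}{\testA(0)}$ is a priori undefined. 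Your two-parameter limit --- $S_\varepsilon\to\solA(0)$ in $Y^*$ paired against a fixed $\testA(t_\delta)\in Y$, with the uniform $L^1(\Omega)$ bound on $S_\varepsilon$ controlling the error $\norm{\testA(0)-\testA(t_\delta)}_{L^\infty(\Omega)}<\delta$ --- is a correct, self-contained substitute for that density argument. Your route buys two things. First, the mollification in the paper needs care exactly where you work: extending $\testA$ past $t=0$ by $\testA(0)$ exits $L^2_\loc(\IR;V)$, while extending by zero creates a jump that destroys $W^{1,1}$ regularity in time, so a one-sided, forward-in-time mollifier is required; your argument avoids this entirely. Second, mollification with a nonnegative kernel preserves $\testA\geq 0$ on all of $[0,T]$ but not the weaker condition $\testA(0)\geq 0$ alone, and it is the latter class over which \Cref{def:SD-PME.sub-super.solution.including.time-derivative} and the lemma actually quantify; your substitution of $\testA(t_\delta)$ by its positive part $\testA(t_\delta)_+$ (legitimate in $V$ by \Cref{lem:trace.interchanged.function}) yields the one-sided statement for precisely that class, whereas the paper's proof as written covers only globally nonnegative test functions. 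Two immaterial nits: the uniform bound is $\norm{S_\varepsilon-\solA_0}_{L^1(\Omega)}\leq M+\norm{\solA_0}_{L^1(\Omega)}$ rather than $2M$, and in the dominated-convergence step you should note that $t\mapsto\norm{\solA_t(t)}_{V^*}\norm{\testA(t)}_{V}$ is integrable as a product of two $L^2(0,T)$ functions.
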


\begin{proof}
	Observe that $\solA(t)-\solA(0)=\int_0^t\solA_t(s)\md s$ in $Y^*$ for all $t\in[0,T]$ by \Cref{prop:Fund.Thm.Cal.Bochner_spaces}. 
	Moreover, for any $\testA\in W^{1,2}([0,T];Y)$ we have that $\pinprod{\solA}{\testA}\in W^{1,2}(0,T)$ and its derivative is given by $\pinprod{\solA}{\testA_t}+\pinprod{\solA_t}{\testA}$.
	Indeed, the mapping $t\mapsto \pinprod{\solA(t)}{\testA(t)}$ is absolutely continuous and its a.e.-derivative is given by the product rule.
	
	Suppose \eqref{eq:SD-PME.solution.including.time-derivative.init.cond.id} holds with $\geq$ instead of $=$.
	Fix $\testC\in Y$, $\testC\geq 0$ and set $\testA(x,t)=(T-t)\testC(x)$, then
	\begin{align*}
	\pinprod{\solA(0)}{\testC}&=-\frac{1}{T}\int_0^T\frac{\md }{\md t}\pinprod{\solA}{\testA}=\frac{1}{T}\left(\iint_{\Omega_T}\solA\testC-\int_{0}^{T}\pinprod{\solA_t}{\testA}\right)\\
	&\!\stackrel{\eqref{eq:SD-PME.solution.including.time-derivative.init.cond.id}}{\leq}\frac{1}{T}\left(\iint_{\Omega_T}\solA\testC-\iint_{\Omega_T}(\solA-\solA_0)\testC\right)=\int_{\Omega}\solA_0\testC.
	\end{align*}
	Conversely, suppose \eqref{eq:initial.cond.alternative} holds with $\leq$ instead of $=$.
	Then,
	\begin{align*}
	\int_0^T\left(\pinprod{\solA_t}{\testA}+\int_{\Omega}(\solA-\solA_0)\testA_t\right)&=\int_0^T\left(\pinprod{\solA_t}{\testA}+\int_{\Omega}\solA\testA_t\right)-\int_{0}^T\pinprod{\solA_0}{\testA_t}\\
	&\geq\int_0^T\frac{\md}{\md t}\pinprod{\solA}{\testA}+\int_{\Omega}\solA(0)\eta(0)=0
	\end{align*}
	for any $\testA\in C^\infty([0,T];Y)$, $\testA\geq 0$ with $\testA(T)=0$.
	We use mollifiers for Bochner spaces to generalize the identity to any $\testA\in W^{1,1}(0,T;L^\infty(\Omega))\cap L^2(0,T;{V})$, $\testA\geq 0$ with $\testA(T)=0$.
\end{proof}

\begin{lemma}[Steklov average of the solution]\label{lem:Steklov.averaged.solution.convergence}
	Let $\solA\in \mcW$, then $\solA^h\in W^{1,\infty}(0,T;L^1(\Omega))$ and its weak derivative is given by	
	\begin{equation}\label{eq:Steklov.averaged.time-derivative.solution}
	\partial_t\solA^h(x,t)=\frac{1}{h}\left(\solA(x,t)-\solA(x,t-h)\right)
	\end{equation}
	for almost every $(x,t)\in \Omega_T$ and any $h>0$.
	Moreover, $\partial_t\solA^h$ can be extended to an element in $L^2(0,T;{V}^*)$ such that $\partial_t\solA^h=(\solA_t)^h$ and $\solA_t^h\to\solA_t$ in $L^2(0,T;{V}^*)$ as $h\to 0$.
	The result still holds if we extend $\solA(t)=\solA_0$ for $t<0$ for some $\solA_0\in L^1(\Omega)$ satisfying \eqref{eq:SD-PME.solution.including.time-derivative.init.cond.id}.
	In this case, $\solA_t(t)$ vanishes for $t<0$.
\end{lemma}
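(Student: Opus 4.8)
The plan is to prove the $L^1(\Omega)$-valued assertion first, which holds unconditionally, and then to identify $\partial_t\solA^h$ with $(\solA_t)^h$ inside the common space $Y^*$, where $Y:=L^\infty(\Omega)\cap{V}$. Since $\solA\in L^\infty(0,T;L^1(\Omega))\subseteq L^1(0,T;L^1(\Omega))\cong L^1(\Omega_T)$, the indefinite Bochner integral $G(t):=\int_0^t\solA(s)\,\md s$ is absolutely continuous as an $L^1(\Omega)$-valued map with a.e.\ derivative $\solA(t)$, by the Bochner--Lebesgue differentiation theorem. Writing $\solA^h(t)=\frac1h\bigl(G(t)-G(t-h)\bigr)$ and differentiating shows $\solA^h\in W^{1,1}(0,T;L^1(\Omega))$ with weak derivative $\frac1h(\solA(t)-\solA(t-h))$, the value for $t-h<0$ being governed by the zero extension of \Cref{rem:Steklov.average.extent.functions}; the pointwise-in-$(x,t)$ form \eqref{eq:Steklov.averaged.time-derivative.solution} then follows from \Cref{rem:Steklov.point-wise.value}. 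Since $\norm{\solA^h(t)}_{L^1(\Omega)}$ and $\norm{\partial_t\solA^h(t)}_{L^1(\Omega)}$ are bounded uniformly in $t$ by $\norm{\solA}_{L^\infty(0,T;L^1(\Omega))}$ and $\tfrac2h\norm{\solA}_{L^\infty(0,T;L^1(\Omega))}$ respectively, we even obtain $\solA^h\in W^{1,\infty}(0,T;L^1(\Omega))$.

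For the second assertion, recall that $\mcW\subseteq W^{1,2}(0,T;Y^*)$ and $\solA\in C([0,T];Y^*)$, so \Cref{prop:Fund.Thm.Cal.Bochner_spaces} gives $\solA(t)-\solA(t-h)=\int_{t-h}^t\solA_t(\tau)\,\md\tau$ in $Y^*$ whenever the window $(t-h,t)$ stays inside $(0,T)$. Pairing $\partial_t\solA^h(t)\in L^1(\Omega)\subset Y^*$ with an arbitrary $\testC\in Y$, using that the $(Y^*,Y)$-pairing of an $L^1(\Omega)$ element with $\testC$ is $\int_\Omega(\emptyarg)\testC$ (cf.\ \eqref{eq:interplay.L1-V*}) and that this pairing commutes with the Bochner integral, yields
\[
\pinprod{\partial_t\solA^h(t)}{\testC}=\frac1h\int_\Omega\bigl(\solA(t)-\solA(t-h)\bigr)\testC=\frac1h\int_{t-h}^t\pinprod{\solA_t(\tau)}{\testC}\,\md\tau=\pinprod{(\solA_t)^h(t)}{\testC}.
\]
Because $(\solA_t)^h(t)\in{V}^*$ (it is a Bochner integral of the ${V}^*$-valued map $\solA_t$) and $Y$ is dense in ${V}$ (truncation preserves the trace-zero condition on $\Gamma$, so ${V}^*\hookrightarrow Y^*$ is injective), this shows $\partial_t\solA^h(t)=(\solA_t)^h(t)$ in ${V}^*$ for a.e.\ such $t$. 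Hence $\partial_t\solA^h$, a priori only $L^1(\Omega)$-valued, in fact lands in $L^1(\Omega)\cap{V}^*$ and defines an element of $L^2(0,T;{V}^*)$; the convergence $\solA_t^h=(\solA_t)^h\to\solA_t$ in $L^2(0,T;{V}^*)$ is then the standard approximation property of Steklov averages of $L^2(0,T;{V}^*)$-functions recorded in \Cref{sect:Appendix}.

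The delicate point, and the reason the last sentence of the lemma is needed, is the initial layer $t\in(0,h)$, where the window $(t-h,t)$ crosses $0$ and the value $\solA(t-h)$ entering $\partial_t\solA^h$ is dictated by the extension of $\solA$ to $t<0$. Under the continuous extension $\solA(t)=\solA_0$ for $t<0$, the initial condition \eqref{eq:SD-PME.solution.including.time-derivative.init.cond.id} is, by \Cref{lem:initial.cond.alternative}, equivalent to $\solA(0)=\solA_0$ in $Y^*$, so the extended map has no jump at $t=0$. Consequently its $Y^*$-weak derivative is $\solA_t$ on $(0,T)$ and $0$ on $(-\infty,0)$, i.e.\ $\solA_t$ vanishes for $t<0$, and the identity $\solA(t)-\solA(t-h)=\int_{t-h}^t\solA_t(\tau)\,\md\tau$ now holds for every $t\in(0,T)$. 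The computation of the preceding paragraph then applies verbatim and gives $\partial_t\solA^h=(\solA_t)^h\in L^2(0,T;{V}^*)$ on all of $(0,T)$, together with the stated convergence.

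The step I expect to be the main obstacle is exactly this control of the initial boundary layer: with the default zero extension, $\partial_t\solA^h(t)=\tfrac1h\solA(t)$ for $t<h$ need not lie in ${V}^*$, so the identification with $(\solA_t)^h$ and the extension to $L^2(0,T;{V}^*)$ can break down near $t=0$. The continuous extension by $\solA_0$, justified through the initial condition via \Cref{lem:initial.cond.alternative}, is precisely what removes the spurious jump (equivalently, the Dirac mass at $t=0$ in the distributional derivative of the extended solution) and makes the identity hold on the whole interval. The remaining ingredients---Bochner--Lebesgue differentiation, the $(Y^*,Y)$-pairing \eqref{eq:interplay.L1-V*}, and $L^2(0,T;{V}^*)$-approximation by Steklov averages---are standard and collected in \Cref{sect:Appendix}.
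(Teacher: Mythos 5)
Your proof is correct and, in substance, follows the same route as the paper's: the $L^1(\Omega)$-valued statement via the fundamental theorem of calculus in Bochner spaces (\Cref{prop:Fund.Thm.Cal.Bochner_spaces}, which is also what underlies \Cref{lem:Steklov.properties}), the identification $\partial_t\solA^h=(\solA_t)^h$ through the $(Y^*,Y)$ framework with $Y=L^\infty(\Omega)\cap V$, the standard Steklov approximation property in $L^2(0,T;{V}^*)$, and, for the $\solA_0$-extension, \Cref{lem:initial.cond.alternative} to see that the extended function has no jump at $t=0$, so that its $Y^*$-valued weak derivative is $\solA_t\chi_{(0,T)}$. The one genuine difference is your explicit treatment of the initial layer $t\in(0,h)$, and your caution there is warranted: the paper's own proof simply invokes \Cref{lem:Steklov.properties} \textit{(iii)}, whose hypothesis $\solA\in W^{1,1}_\loc(\IR;Y^*)$ is \emph{not} satisfied by the zero-extended function (it jumps at $t=0$), and indeed for $t\in(0,h)$ one has $\partial_t\solA^h(t)=\frac{1}{h}\solA(t)$ while $(\solA_t)^h(t)=\frac{1}{h}\left(\solA(t)-\solA(0)\right)$ in $Y^*$, a discrepancy of $\frac{1}{h}\solA(0)$ which need not lie in ${V}^*$ nor vanish. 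So under the default zero extension of \Cref{rem:Steklov.average.extent.functions} the second assertion of the lemma is only valid on $(h,T)$, exactly as you say, and the full statement on $(0,T)$ requires the $\solA_0$-extension together with \eqref{eq:SD-PME.solution.including.time-derivative.init.cond.id}. This imprecision is harmless downstream (the chain rule, \Cref{prop:chain.rule.in.time}, uses the identification either on intervals $(\tau_1,\tau_2)$ with $\tau_1>0$ fixed and $h\to 0$, or under the $\solA_0$-extension), but your version localizes the issue correctly where the paper's proof glosses over it.
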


\begin{proof}
	By assumption $\solA\in L^\infty(0,T;L^1(\Omega))\cap W^{1,2}(0,T;{Y}^*)$ and therefore, $\solA^h$ is defined unambiguously, see \Cref{rem:embedded.spaces.and.Bochner.and.Steklov}.
	From \Cref{lem:Steklov.properties} it follows that $\solA^h\in W^{1,\infty}(0,T;L^1(\Omega))\cap W^{1,2}(0,T;Y^*)$ with weak derivative $(\solA^h)_t=\solA_t^h\in L^2(0,T;{V}^*)$ as desired.
	Also, \eqref{eq:Steklov.averaged.time-derivative.solution} is implied by \Cref{lem:Steklov.properties}.
	For the final statement we extend $\solA(t)=\solA_0$ for $t<0$ instead of using \Cref{rem:Steklov.average.extent.functions}.
	By \Cref{lem:initial.cond.alternative} we see that $\solA\in L^\infty(-\infty,T;L^1(\Omega))\cap W^{1,2}_\loc(-\infty,T;{Y}^*)$ and therefore, the previous arguments are valid.
	In particular, the weak derivative of $Y^*$-valued function $\solA$ vanishes for $t<0$ and hence, $\solA_t\in L^2(-\infty,T;{V}^*)$.
\end{proof}

\subsection*{Chain rule for the time derivative}

We derive a chain rule for the time derivative in \eqref{eq:SD-PME.Problem.DiffEqn}.
The idea is to multiply the equation by $\psi(\nonlin(\solA))$ with a 
suitable function $\psi$. 
To formalize the approach, we introduce a transform that  was used in \cite{Otto95} to prove a chain rule for a weaker class of solutions and adopt the technique to our setting. 
Here, we always assume that $\phi:I\to\mathbb{R}$ satisfies \eqref{itm:nonlin.basic.assumption} and that $\solA^D$ satisfies \eqref{itm:Dirichlet.boundary.condtion}.

\begin{definition}
	Let $\Psi:\IR\to \IR$ be locally Lipschitz continuous and $\bar{z}\in{I}$. 
	We write $\psi:=\Psi'$ and $\bar{\zeta}:=\nonlin(\bar{z})$ and
	define the transformed function $\Psi^\star(\emptyarg;\bar{z}):{I}\to\IR$ by
	\begin{equation}\label{eq:transformed.function.definition}
	\Psi^\star(z;\bar{z})=\int_{\bar{z}}^z\psi(\nonlin(\tilde{z})-\bar{\zeta})\md\tilde{z}, \qquad z\in{I}.
	\end{equation}
\end{definition}

\begin{lemma}[Properties of the transform]
	Let $\bar{z}\in{I}$, then $\Psi^\star(\emptyarg;\bar{z}):{I}\to \IR$ is 
	locally Lipschitz continuous with
	\begin{equation}\label{eq:transformed.function.derivative}
	\left[\Psi^\star(\emptyarg;\bar{z})\right]'=\psi(\nonlin-\bar{\zeta}).
	\end{equation}
	Furthermore, if $\Psi$ is convex, then $\Psi^\star(\emptyarg;\bar{z})$ is convex and the inequalities
	\begin{equation}\label{eq:transformed.function.estimates}
	\psi(\nonlin(z_2)-\bar{\zeta})(z_1-z_2)\leq\Psi^\star(z_1;\bar{z})-\Psi^\star(z_2;\bar{z})\leq \psi(\nonlin(z_1)-\bar{\zeta})(z_1-z_2)
	\end{equation} 
	hold for all $z_1,z_2\in {I}$.
	In particular, if $\psi(0)=0$, then, 
	\begin{equation}\label{eq:transformed.function.positive.estimates}
	0\leq\Psi^\star(z;\bar{z})\leq \psi(\nonlin(z)-\bar{\zeta})(z-\bar{z})
	\end{equation}
	for all $z\in{I}$.
\end{lemma}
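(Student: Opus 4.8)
The plan is to treat the three assertions in order, the whole argument resting on the observation that, by its very definition \eqref{eq:transformed.function.definition}, $\Psi^\star(\emptyarg;\bar{z})$ is the indefinite integral of the composed function $h(\tilde{z}):=\psi(\nonlin(\tilde{z})-\bar{\zeta})$. First I would establish local Lipschitz continuity together with \eqref{eq:transformed.function.derivative}. Since $\Psi$ is locally Lipschitz, $\psi=\Psi'$ exists a.e.\ and is essentially bounded on every compact subset of $\IR$ by the corresponding local Lipschitz constant; by \eqref{itm:nonlin.basic.assumption} the map $\nonlin$ is continuous, hence sends each compact $[a,b]\subset I$ to a compact set, so $h$ is essentially bounded on $[a,b]$. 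Measurability of $h$ follows by composing a Borel representative of $\Psi'$ (a derivative of a continuous function is Borel, being a pointwise limit of continuous difference quotients) with the continuous map $\nonlin(\emptyarg)-\bar{\zeta}$. Thus $h\in L^\infty_{\loc}$, and an indefinite integral of an essentially locally bounded function is locally Lipschitz; Lebesgue's differentiation theorem then yields $[\Psi^\star(\emptyarg;\bar{z})]'=h$ a.e., which is \eqref{eq:transformed.function.derivative}.

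Next I would assume $\Psi$ convex. Then $\psi$ is non-decreasing, and since $\nonlin$ is strictly increasing by \eqref{itm:nonlin.basic.assumption}, the integrand $h$ is non-decreasing as well; here I would fix the monotone, everywhere-defined representative of $\psi$ (e.g.\ the right derivative $\Psi'_+$), so that the values $h(z_1),h(z_2)$ are unambiguous. An indefinite integral of a non-decreasing function is convex, giving convexity of $\Psi^\star(\emptyarg;\bar{z})$. For \eqref{eq:transformed.function.estimates} I would take $z_1\geq z_2$ without loss of generality and combine $\Psi^\star(z_1;\bar{z})-\Psi^\star(z_2;\bar{z})=\int_{z_2}^{z_1}h(\tilde{z})\,\md\tilde{z}$ with the sandwich $h(z_2)\leq h(\tilde{z})\leq h(z_1)$ valid on $[z_2,z_1]$ by monotonicity; integrating over $[z_2,z_1]$ produces both inequalities, and the case $z_1<z_2$ follows by symmetry, since the integral and the factor $z_1-z_2$ change sign together. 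Equivalently, \eqref{eq:transformed.function.estimates} is just the subgradient inequality for the convex function $\Psi^\star(\emptyarg;\bar{z})$ with subgradient $h(z_i)=\psi(\nonlin(z_i)-\bar{\zeta})$.

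Finally, \eqref{eq:transformed.function.positive.estimates} is a direct specialisation: taking $z_1=z$ and $z_2=\bar{z}$ in \eqref{eq:transformed.function.estimates} and noting that $\Psi^\star(\bar{z};\bar{z})=0$ and $\psi(\nonlin(\bar{z})-\bar{\zeta})=\psi(0)=0$ under the additional hypothesis $\psi(0)=0$, the left inequality reduces to $0\leq\Psi^\star(z;\bar{z})$ and the right one to $\Psi^\star(z;\bar{z})\leq\psi(\nonlin(z)-\bar{\zeta})(z-\bar{z})$.

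The step I expect to be the main obstacle is the bookkeeping around the a.e.-defined derivative $\psi=\Psi'$: the identity \eqref{eq:transformed.function.derivative} holds only a.e., whereas \eqref{eq:transformed.function.estimates} is asserted for \emph{all} $z_1,z_2\in I$. In the convex case this is resolved by passing to the monotone, everywhere-defined representative of $\psi$, so that the endpoint values $\psi(\nonlin(z_i)-\bar{\zeta})$ are meaningful and the monotone sandwich holds pointwise; the secondary technical point is the measurability of the composition $h$, which I handle by choosing a Borel representative of $\Psi'$ and composing it with the continuous function $\nonlin$.
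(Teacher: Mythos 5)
Your proof is correct and follows essentially the same route as the paper's: write $\Psi^\star(z_1;\bar z)-\Psi^\star(z_2;\bar z)=\int_{z_2}^{z_1}\psi(\nonlin(\tilde z)-\bar\zeta)\,\md\tilde z$, use that the integrand is non-decreasing (monotonicity of $\psi$ composed with the increasing $\nonlin$) to obtain the two-sided sandwich \eqref{eq:transformed.function.estimates}, and specialise $z_1=z$, $z_2=\bar z$ with $\psi(0)=0$ for \eqref{eq:transformed.function.positive.estimates}. Your additional bookkeeping (Borel representative of $\Psi'$, monotone everywhere-defined representative in the convex case) is a refinement of, not a departure from, the paper's more informal ``differentiate the definition'' step, and is indeed the right way to make \eqref{eq:transformed.function.derivative} and the pointwise validity of \eqref{eq:transformed.function.estimates} precise under the mere local Lipschitz hypothesis on $\Psi$.
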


\begin{proof}
	Differentiating \eqref{eq:transformed.function.definition} with respect to $z$ yields \eqref{eq:transformed.function.derivative}.
	Next, observe that
	\begin{align*}
	\Psi^\star(z_1;\bar{z})-\Psi^\star(z_2;\bar{z})&=\int_{z_2}^{z_1}\psi(\nonlin(\tilde{z})-\bar{\zeta})\md\tilde{z}.
	\end{align*}
	If $\Psi$ is convex, then $\psi$ is non-decreasing and, if $z_1\geq z_2$, then $\psi(\phi(z_1)-\bar{\zeta})\geq \psi(\phi(z)-\bar{\zeta})\geq \psi(\phi(z_2)-\bar{\zeta})$ for all $z\in [z_2,z_1],$ where we used that $\phi$ is increasing. Hence, it follows that  
	\begin{equation*}
	\psi(\nonlin(z_2)-\bar{\zeta})(z_1-z_2)\leq\int_{z_2}^{z_1}\psi(\nonlin(\tilde{z})-\bar{\zeta})\md\tilde{z}\leq\psi(\nonlin(z_1)-\bar{\zeta})(z_1-z_2).
	\end{equation*}
	Similarly, if $z_1\leq z_2$, then $\psi(\phi(z_1)-\bar{\zeta})\leq \psi(\phi(z)-\bar{\zeta})\leq \psi(\phi(z_2)-\bar{\zeta})$ for all $z\in [z_1,z_2]$, which implies that  
	\begin{equation*}
	\psi(\nonlin(z_1)-\bar{\zeta})(z_2-z_1)\leq\int_{z_1}^{z_2}\psi(\nonlin(\tilde{z})-\bar{\zeta})\md\tilde{z}\leq\psi(\nonlin(z_2)-\bar{\zeta})(z_2-z_1).
	\end{equation*}
	Multiplying  the inequality by $-1$ we obtain  \eqref{eq:transformed.function.estimates}.
	Finally, \eqref{eq:transformed.function.positive.estimates} follows from \eqref{eq:transformed.function.estimates}
	by taking $z_1=z$ and $z_2=\bar{z}$.
\end{proof}

For instance, suppose that $\Psi(\zeta)=\frac{1}{2}\zeta^2$, then $\psi(\zeta)=\zeta$.
Setting $\bar{z}=0$, assuming that $\Phi(0)=0$ and writing $\Psi^\star=\Psi^\star(\emptyarg;0)$ 
we obtain
\[
\Psi^\star(z)=\Phi(z):=\int_0^z\nonlin(\tilde{z})\md \tilde{z}.
\]
Moreover, if \eqref{itm:nonlin.blow-up.assumption} holds, $\nonlin\in C^1(I)$ and $\beta:=\nonlin^{-1}$, then $\beta'(\zeta)=\frac{1}{\nonlin'(\beta(\zeta))}$ and we can write
\begin{equation}\label{eq:transformed.function.definition.alternative}
\Psi^\star(z)=\int_{0}^{\phi(z)} \psi(\zeta)\beta'(\zeta)\md \zeta
=\int_{0}^{\phi(z)}\zeta\beta'(\zeta)\md \zeta. 
\end{equation}
Integration by parts then implies that
\[
\Psi^\star(z)=\nonlin(z)z-B(\nonlin(z)),
\]
where $B(y):=\int_0^y\beta(s)ds$.
Actually, $B$ is a convex function, so the function $\zeta\mapsto z\zeta - B(\zeta)$ is concave.
Suppose $\bar{\zeta}$ is a critical point of this function, i.e.\ $z-B'(\bar{\zeta})=0$, then $\bar \zeta$ is a maximum and
$\beta(\bar{\zeta})=z$, so $\bar{\zeta}=\nonlin(z)$.
Therefore, we have
\[
\Phi(z)=\Psi^\star(z)=\sup_{\zeta\in\IR} \left\{z\zeta-B(\zeta)\right\}=:B^*(z),
\]
where $B^*$ denotes the usual Legendre transform of $B$, where we allow $\Nonlin(z)$ to attain $\{\infty\}$ for $z\geq\sup{I}$.
This formulation of the transformed function is the one introduced in \cite{Alt-Luck1983} to define an energy functional.

Before we state the chain rule, we verify that the composition with $\psi$ behaves well with respect to the trace operator.

\begin{lemma}\label{lem:trace.interchanged.function}
	Suppose $\psi:\IR\to\IR$ is a continuous, piece-wise continuously differentiable function with bounded derivative.
	Then, $\Tr \psi(\solC)=\psi(\Tr\solC)$ in $L^2(\partial\Omega)$ for any $\solC\in H^1(\Omega)$.
\end{lemma}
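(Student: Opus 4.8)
The plan is to reduce the identity to smooth functions by density and then pass to the limit, exploiting that a sequence in $L^2(\partial\Omega)$ cannot have different weak and strong limits. First I would record that $\testB$, being piece-wise continuously differentiable with bounded derivative, is globally Lipschitz with constant $\Lambda:=\sup\abs{\testB'}<\infty$. Consequently the superposition (Nemytskii) map $\solC\mapsto\testB(\solC)$ sends $H^1(\Omega)$ into itself, with $\nabla\testB(\solC)=\testB'(\solC)\nabla\solC$ almost everywhere; this is the chain rule for Sobolev functions, where at the finitely many kinks of $\testB$ one uses that $\nabla\solC=0$ a.e.\ on the corresponding level sets. In particular $\norm{\nabla\testB(\solC)}_{L^2(\Omega)}\leq\Lambda\norm{\nabla\solC}_{L^2(\Omega)}$, so $\Tr\testB(\solC)$ is a well-defined element of $L^2(\partial\Omega)$, and so is $\testB(\Tr\solC)$, since $\Tr\solC\in L^2(\partial\Omega)$ and $\testB$ is Lipschitz.

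Next, since $\Omega$ is a bounded Lipschitz domain, $C^\infty(\overline{\Omega})$ is dense in $H^1(\Omega)$, so I would choose $\solC_n\in C^\infty(\overline{\Omega})$ with $\solC_n\to\solC$ in $H^1(\Omega)$. For each smooth $\solC_n$ the composite $\testB(\solC_n)$ is Lipschitz on $\overline{\Omega}$, hence lies in $C(\overline{\Omega})\cap H^1(\Omega)$, and for such functions the trace is simply the boundary restriction. Therefore the identity holds trivially at the approximate level:
\[
\Tr\testB(\solC_n)=\testB(\solC_n)\big|_{\partial\Omega}=\testB\big(\solC_n|_{\partial\Omega}\big)=\testB(\Tr\solC_n).
\]

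The passage to the limit would be carried out on the two sides separately. On the right-hand side, continuity of the trace operator gives $\Tr\solC_n\to\Tr\solC$ strongly in $L^2(\partial\Omega)$, and Lipschitz continuity of $\testB$ as a superposition operator on $L^2(\partial\Omega)$ yields $\testB(\Tr\solC_n)\to\testB(\Tr\solC)$ strongly in $L^2(\partial\Omega)$. On the left-hand side, the bound $\norm{\nabla\testB(\solC_n)}_{L^2(\Omega)}\leq\Lambda\norm{\nabla\solC_n}_{L^2(\Omega)}$ shows that $\{\testB(\solC_n)\}$ is bounded in $H^1(\Omega)$; since $\testB(\solC_n)\to\testB(\solC)$ in $L^2(\Omega)$ by Lipschitz continuity, every weakly convergent subsequence must have weak limit $\testB(\solC)$, whence $\testB(\solC_n)\rightharpoonup\testB(\solC)$ in $H^1(\Omega)$. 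Weak continuity of the bounded linear trace operator then gives $\Tr\testB(\solC_n)\rightharpoonup\Tr\testB(\solC)$ in $L^2(\partial\Omega)$. Combining both limits through the approximate identity, the sequence $\Tr\testB(\solC_n)=\testB(\Tr\solC_n)$ converges strongly to $\testB(\Tr\solC)$ and weakly to $\Tr\testB(\solC)$; by uniqueness of the weak limit these coincide, which is the asserted identity.

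I expect the only genuinely delicate point to be the Sobolev chain rule used to obtain the uniform gradient bound on $\testB(\solC_n)$, i.e.\ controlling the finitely many points where $\testB$ fails to be differentiable; this is standard but must be invoked together with the fact that $\nabla\solC_n$ vanishes a.e.\ on the level sets $\{\solC_n=a\}$ at the kink values $a$. The rest is a soft weak/strong-limit argument that deliberately avoids claiming strong $H^1$-convergence of $\testB(\solC_n)$, which in general fails for merely Lipschitz $\testB$.
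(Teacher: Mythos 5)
Your proof is correct, but it takes a genuinely different route from the paper's. The paper works with strong convergence throughout: for $C^1$ nonlinearities it invokes the construction from the proof of Lemma 7.5 in \cite{Gil-Tru98} to produce smooth $\solC_n$ with \emph{both} $\solC_n\to\solC$ and $\psi(\solC_n)\to\psi(\solC)$ strongly in $H^1(\Omega)$, so the traces on both sides of the approximate identity converge strongly in $L^2(\partial\Omega)$; the piecewise-$C^1$ case is then reduced to this one by induction on the corners, writing $\psi=\psi_1\chi_{[0,\infty)}+\psi_2\chi_{(-\infty,0)}$ and using the intermediate facts $\Tr\solC_\pm=(\Tr\solC)_\pm$, $\Tr\abs{\solC}=\abs{\Tr\solC}$ together with linearity of the trace. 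You instead take an arbitrary smooth $H^1$-approximation, establish the identity for smooth functions from the fact that traces of Lipschitz (or $C(\overline{\Omega})\cap H^1(\Omega)$) functions are boundary restrictions, and pass to the limit asymmetrically: strongly in $L^2(\partial\Omega)$ on the right, and only weakly on the left, using the uniform gradient bound, the strong $L^2(\Omega)$ convergence of $\psi(\solC_n)$, and weak-weak continuity of the bounded trace operator, concluding by uniqueness of limits. This buys you two things: no case distinction or decomposition over the corners of $\psi$ is needed, and since your limit passage uses nothing beyond the global Lipschitz continuity of $\psi$ (the chain rule in your first paragraph is only needed for smooth $\solC_n$, where it is elementary, and $\psi(\solC)\in H^1(\Omega)$ already follows from your weak-limit argument), the same proof covers arbitrary Lipschitz $\psi$. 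The price is that you must invoke the standard fact that the trace of a continuous (or Lipschitz) $H^1$ function equals its boundary restriction --- this is where the density argument is hidden, but it is part of the trace theorem as usually stated, e.g.\ in \cite{evans2010}, so invoking it is legitimate. One peripheral remark of yours is overly pessimistic: under the hypotheses of the lemma, strong $H^1$ convergence of $\psi(\solC_n)$ does in fact hold along a suitable approximating sequence, by splitting $\nabla\psi(\solC_n)-\nabla\psi(\solC)=\psi'(\solC_n)(\nabla\solC_n-\nabla\solC)+(\psi'(\solC_n)-\psi'(\solC))\nabla\solC$ and using that $\nabla\solC$ vanishes a.e.\ on the kink level sets of $\solC$; this is essentially what the paper exploits. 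Since your argument never relies on the failure of such strong convergence, this imprecision does not affect its validity.
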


\begin{proof}
	A chain rule for the composition of $\psi$ with elements in $H^1(\Omega)$ can be found in \cite{Gil-Tru98}. 
	We extend this proof to show the statement of the lemma.
	
	Since $\Omega$ is a bounded Lipschitz domain, $C^\infty(\overline{\Omega})$ is dense in $H^1(\Omega)$.
	First, suppose that $\psi$ is a $C^1$-function with bounded derivative.
	Then, following the arguments in the proof of Lemma 7.5 in \cite{Gil-Tru98}, we obtain a sequence $\{\solC_n\}_{n=1}^\infty$ in $C^\infty(\overline{\Omega})$ such that
	\[
	\solC_n\to\solC\quad \text{and} \quad \psi(\solC_n)\to \psi(\solC)\quad\text{in $H^1(\Omega)$.}
	\]
	By the continuity of the trace operator, $\solC_{n}|_{\partial\Omega}\to\Tr\solC$ and $\psi(\solC_n)|_{\partial\Omega}\to \Tr\psi(\solC)$ in $L^2(\partial\Omega)$.
	On the other hand, the Lipschitz continuity of $\psi$ implies that $\psi\left(\solC_{n}|_{\partial\Omega}\right)\to \psi(\Tr\solC)$ in $L^2(\partial\Omega)$ and hence, $\Tr \psi(\solC)=\psi(\Tr\solC)$ in $L^2(\partial\Omega)$.
	
	Next, observe that $(\solC_n)_\pm\to\solC_\pm$ and $\abs{\solC_n}\to\abs{\solC}$ in $H^1(\Omega)$ and hence, $\Tr\solC_\pm=(\Tr\solC)_\pm$ and $\Tr\abs{\solC}=\abs{\Tr\solC}$.	
	Finally, suppose that $\psi\in C(\IR)$ is only piece-wise continuously differentiable with bounded derivative.
	By an induction argument, we may assume that $\psi$ has only one corner and without loss of generality we suppose that it is at the origin. 
	We write 
	\[
	\psi=\psi_1\chi_{[0,\infty)}+\psi_2\chi_{(-\infty,0)},
	\]
	for certain $\psi_1,\psi_2\in C^1(\IR)$ with bounded derivatives.
	Then, $\psi(\solC)=\psi_1(\solC_+)+\psi_2(\solC_-)$ and the statement follows by the linearity of the trace operator.
\end{proof}

Next, we prove a chain rule that
generalizes Lemma 1.5 in \cite{Alt-Luck1983} for our setting.
In \cite{Alt-Luck1983}, the particular case $\Psi(\zeta)=\frac{1}{2}\zeta^2$ was considered which corresponds to the energy functional.
Although we do not need the chain rule in this generality for the well-posedness proof, the result might be of independent interest.
For instance, it can be applied to other Lyapunov functionals that are used in entropy methods.
Our result also includes a stronger version of Lemma 1 in \cite{Otto95} which can be shown by exploiting the structure of our equation.
The techniques we use are similar to the methods in \cite{Alt-Luck1983} and \cite{Otto95}.
In particular, property \textit{(i)} is based on Lemma 1.5 in \cite{Alt-Luck1983} and property \textit{(ii)} on Lemma 1 in \cite{Otto95}.

\begin{proposition}[Chain rule]\label{prop:chain.rule.in.time}
	Let $\solA:\Omega_T\to{I}$ be such that $\solA\in \mcW$ and $\phi(\solA)\in L^2(0,T;H^1(\Omega))$ and let $\bar{\solA}:\Omega\to{I}$ be such that $\bar{\solA}\in L^1(\Omega)$ and $\phi(\bar{\solA})\in H^1(\Omega)$.
	Let $\Psi\in C^1(\IR)$ be convex or concave and suppose that $\psi:=\Psi'$ is piece-wise continuously differentiable, $\psi'$ is bounded and define $\Psi^\star$ by \eqref{eq:transformed.function.definition}.
	Assume that
	\begin{equation}\label{eq:chain.rule.in.time.main.condition}
	\psi(\phi(\solA)-\phi(\bar{\solA}))\in \mcV.
	\end{equation}
	Finally, suppose that either
	\begin{enumerate}[(i)]
		\item $\nonlin(\bar{\solA})\in L^\infty(\Omega)\cap H^1(\Omega)$, $\psi(0)=0$ and there exists a function $\solA_0:\Omega\to{I}$ such that $\solA_0,\Psi^\star(\solA_0;\bar{\solA})\in L^1(\Omega)$ and \eqref{eq:SD-PME.solution.including.time-derivative.init.cond.id} holds, or
		\item $\psi$ is bounded. 
	\end{enumerate}
	Then, $\Psi^\star(\solA;\bar{\solA})\in L^\infty(0,T;L^1(\Omega))$ and $t\mapsto \int_{\Omega}\Psi^\star(\solA(t);\bar{\solA})$ can be represented by an absolutely continuous function $\theta$ such that
	\begin{equation}\label{eq:chain.rule.in.time}
	\theta'
	=\pinprod{\solA_t}{\psi\left(\nonlin(\solA)-\nonlin(\bar{\solA})\right)}
	\end{equation}
	a.e.\ in $(0,T)$.
	If \textit{(i)} holds, then $\theta(0)=\int_{\Omega}\Psi^\star(\solA_0;\bar{\solA})$.
\end{proposition}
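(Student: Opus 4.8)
The plan is to reduce everything to the pointwise convexity inequality for $\Psi^\star(\,\cdot\,;\bar{\solA})$ and then to upgrade it to the exact identity \eqref{eq:chain.rule.in.time} by Steklov averaging. Write $\zeta:=\psi(\nonlin(\solA)-\nonlin(\bar{\solA}))$, which lies in $\mcV$ by \eqref{eq:chain.rule.in.time.main.condition}, and note that it suffices to treat the convex case, the concave one following by replacing $\Psi$ with $-\Psi$, since the asserted identity is linear in $\Psi$ and the hypotheses $\psi(0)=0$ and $\Psi^\star(\solA_0;\bar{\solA})\in L^1(\Omega)$ are preserved. For convex $\Psi$ the transform $\Psi^\star(\,\cdot\,;\bar{\solA})$ is convex, so the subgradient inequalities applied to $a=\solA(\tau,x)$, $b=\solA(\tau-h,x)$ and vice versa yield, pointwise in $(x,\tau)$ and for every $h>0$,
\[
\zeta(\tau-h)\,\partial_t\solA^h(\tau)\ \le\ \frac{\Psi^\star(\solA(\tau);\bar{\solA})-\Psi^\star(\solA(\tau-h);\bar{\solA})}{h}\ \le\ \zeta(\tau)\,\partial_t\solA^h(\tau),
\]
where $\partial_t\solA^h=\tfrac1h(\solA(\tau)-\solA(\tau-h))$ is the weak derivative of the Steklov average from \Cref{lem:Steklov.averaged.solution.convergence}, and the outer products are exactly the integrands appearing in \eqref{eq:transformed.function.estimates}.

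First I would dispose of the boundedness needed to pair these objects. If $\psi$ is bounded (case \textit{(ii)}), then $\zeta(\tau)\in L^\infty(\Omega)\cap{V}=Y$, so \eqref{eq:interplay.L1-V*} identifies $\int_\Omega\zeta(\tau)\,\partial_t\solA^h(\tau)$ with the pairing $\pinprod{(\solA_t)^h(\tau)}{\zeta(\tau)}$, and $\abs{\Psi^\star(\solA;\bar{\solA})}\leq\|\psi\|_\infty\abs{\solA-\bar{\solA}}$ gives $\Psi^\star(\solA;\bar{\solA})\in L^\infty(0,T;L^1(\Omega))$ at once. Integrating the sandwich over $\Omega\times(s,t)$ then produces $\tfrac1h\int_{t-h}^t G-\tfrac1h\int_{s-h}^s G$ in the middle, with $G(\tau):=\int_\Omega\Psi^\star(\solA(\tau);\bar{\solA})\in L^1(0,T)$, flanked by $\int_s^t\pinprod{(\solA_t)^h(\tau)}{\zeta(\tau-h)}\md\tau$ and $\int_s^t\pinprod{(\solA_t)^h(\tau)}{\zeta(\tau)}\md\tau$. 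Letting $h\to0$ and using $(\solA_t)^h\to\solA_t$ in $L^2(0,T;{V}^*)$ (\Cref{lem:Steklov.averaged.solution.convergence}), continuity of translation in $L^2(0,T;{V})$, and the Lebesgue-point property of $G$, both outer terms converge to $\int_s^t\pinprod{\solA_t}{\zeta}$ and the middle to $G(t)-G(s)$, so the squeeze forces $G(t)-G(s)=\int_s^t\pinprod{\solA_t}{\zeta}$ for a.e.\ $0<s<t<T$. Since the right-hand integrand is in $L^1(0,T)$, the absolutely continuous representative $\theta$ and the a.e.\ identity \eqref{eq:chain.rule.in.time} follow.

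In case \textit{(i)} the function $\psi$ need not be bounded, only globally Lipschitz with $\psi(0)=0$, so $\zeta(\tau)$ need not lie in $L^\infty(\Omega)$ and the decisive step \eqref{eq:interplay.L1-V*} is unavailable. I would resolve this by truncation: replace $\psi$ by the bounded, nondecreasing, piecewise $C^1$ functions $\psi_k$ agreeing with $\psi$ on $[-k,k]$ and constant beyond, so that $\psi_k(0)=0$, $\abs{\psi_k'}\leq\|\psi'\|_\infty$, and $\psi_k\to\psi$, $\psi_k'\to\psi'$ pointwise. By \Cref{lem:trace.interchanged.function}, and since $\psi_k$ vanishes wherever $\psi$ does while $\Tr\zeta=0$ on $\Gamma$, the truncated $\zeta_k:=\psi_k(\nonlin(\solA)-\nonlin(\bar{\solA}))$ again lies in $\mcV$, and dominated convergence gives $\zeta_k\to\zeta$ in $\mcV$. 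Applying the case-\textit{(ii)} identity to each $\psi_k$ and passing to the limit, the right-hand sides converge to $\int_s^t\pinprod{\solA_t}{\zeta}$, while by \eqref{eq:transformed.function.positive.estimates} we have $0\leq\Psi_k^\star(\solA;\bar{\solA})\nearrow\Psi^\star(\solA;\bar{\solA})$, so monotone convergence identifies the limit of the left-hand sides with $G(t)-G(s)$. To fix the value at the initial time and obtain $\theta(0)=\int_\Omega\Psi^\star(\solA_0;\bar{\solA})$, I would extend $\solA(\tau)=\solA_0$ for $\tau<0$ as in \Cref{lem:Steklov.averaged.solution.convergence}; the initial condition \eqref{eq:SD-PME.solution.including.time-derivative.init.cond.id}, equivalently \Cref{lem:initial.cond.alternative}, then makes $s=0$ an admissible endpoint with $G(0)=\int_\Omega\Psi^\star(\solA_0;\bar{\solA})<\infty$, whence $G(t)=G(0)+\int_0^t\pinprod{\solA_t}{\zeta}\in L^\infty(0,T)$, i.e.\ $\Psi^\star(\solA;\bar{\solA})\in L^\infty(0,T;L^1(\Omega))$.

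The main obstacle is exactly this interplay in case \textit{(i)}: because $\zeta$ fails to be essentially bounded, one cannot pair it directly against the $L^1(\Omega)$-valued derivative $\partial_t\solA^h$ through \eqref{eq:interplay.L1-V*}, and simultaneously the a priori integrability of $\Psi^\star(\solA;\bar{\solA})$ is not yet known. The truncation reconciles both: it legitimises the pairing at each finite level $k$, while the monotonicity $\Psi_k^\star\nearrow\Psi^\star$ together with the finite initial relative energy bootstraps the missing $L^\infty(0,T;L^1(\Omega))$ bound. The remaining care is routine, namely verifying that the shifted Steklov quantities converge in the correct $L^2$-in-time spaces and that $s,t$ may be taken to be Lebesgue points of $G$.
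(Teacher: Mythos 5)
Your treatment of case \textit{(ii)} is correct and is essentially the paper's own argument: the same pointwise sandwich from \eqref{eq:transformed.function.estimates}, Steklov averaging of the time derivative, pairing via \eqref{eq:interplay.L1-V*} (legitimate there because $\psi$ bounded puts $\zeta(\tau)\in L^\infty(\Omega)\cap V$, so the paper's cutoff $\lambda_\varepsilon$ is indeed superfluous), and the squeeze at Lebesgue points. Your reduction of case \textit{(i)} to bounded $\psi$ by truncation is also sound as far as it goes: the verification that $\zeta_k=\psi_k(\nonlin(\solA)-\nonlin(\bar{\solA}))\in\mcV$ (using \Cref{lem:trace.interchanged.function} and $\psi^{-1}(0)\subseteq\psi_k^{-1}(0)$, which follows from monotonicity and $\psi(0)=0$), the convergence $\zeta_k\to\zeta$ in $\mcV$, and the monotonicity $\Psi_k^\star\nearrow\Psi^\star$ all check out, and this truncation plays the same role as the paper's cutoffs $\lambda_\varepsilon$ and $\psi_R$.

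The genuine gap is the anchoring at $t=0$ in case \textit{(i)}. You claim that extending $\solA(\tau)=\solA_0$ for $\tau<0$ and invoking \Cref{lem:initial.cond.alternative} ``makes $s=0$ an admissible endpoint with $G(0)=\int_\Omega\Psi^\star(\solA_0;\bar{\solA})$.'' This does not follow. Under hypothesis \textit{(i)}, $\solA_0$ is only an $L^1$ function with $\Psi^\star(\solA_0;\bar{\solA})\in L^1(\Omega)$; the function $\nonlin(\solA_0)$ has no Sobolev regularity whatsoever, so $\psi_k(\nonlin(\solA_0)-\nonlin(\bar{\solA}))$ is bounded but in general \emph{not} in $V$. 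Consequently, on the extended time interval the condition \eqref{eq:chain.rule.in.time.main.condition} fails for $\tau<0$, the shifted term $\zeta_k(\cdot-h)$ is not $V$-valued near $\tau=0$, and — decisively — the only continuity available at the initial time, namely $\solA\in C([0,T];(L^\infty(\Omega)\cap V)^*)$ with $\pinprod{\solA(0)}{\testA}=\int_\Omega\solA_0\testA$ for $\testA\in L^\infty(\Omega)\cap V$, cannot be tested against $\psi_k(\nonlin(\solA_0)-\nonlin(\bar{\solA}))$. So you cannot identify $\lim_{s\to 0}G_k(s)$ with $\int_\Omega\Psi_k^\star(\solA_0;\bar{\solA})$, and without that anchor the monotone-convergence step only yields differences $G_k(t)-G_k(s)$ that may tend to a finite limit while $G(t)=G(s)=+\infty$; the conclusion $\Psi^\star(\solA;\bar{\solA})\in L^\infty(0,T;L^1(\Omega))$ and the value $\theta(0)$ both collapse. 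This is exactly the point the paper spends the bulk of its proof on, via the liminf inequality \eqref{eq:chain_rule.notconverging.term}: besides truncating $\psi$ to $\psi_R$, one must approximate $\solA_0$ in $L^1(\Omega)$ by functions $\solB_R\in L^\infty(\Omega)$ with $\nonlin(\solB_R)\in\nonlin(\bar{\solA})+V$, and use the convexity inequality \eqref{eq:transformed.function.estimates} to pass from $\Psi_R^\star(\solA(t);\bar{\solA})-\Psi_R^\star(\solB_R;\bar{\solA})$ down to $\psi_R(\nonlin(\solB_R)-\nonlin(\bar{\solA}))(\solA(t)-\solA_0)$, whose test function \emph{does} lie in $L^\infty(\Omega)\cap V$ and is therefore admissible for \Cref{lem:initial.cond.alternative}. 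That second approximation, of the initial datum rather than of $\psi$, is the missing idea in your proposal; the obstruction at $t=0$ is the lack of regularity of $\nonlin(\solA_0)$, which truncation of $\psi$ alone cannot repair.
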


\begin{proof}
	We assume without loss of generality that $\Psi$ is convex.
	If $\Psi$ is concave, then consider $-\Psi$ instead.
	To simplify notations we write 
	\[
	\solB(x,t)=\nonlin(\solA(x,t))-\nonlin(\bar{\solA}(x))\quad\text{and}\quad\Psi^\star(x,t)=\Psi^\star(\solA(x,t);\bar{\solA}(x)).
	\]
	In view of \Cref{lem:initial.cond.alternative,lem:Steklov.averaged.solution.convergence}, we set $\solA(t)=\solA_0$ for $t<0$ whenever we assume that \textit{(i)} holds.
	Moreover, we take the representative of $\solA$ in $C([0,T];(L^\infty(\Omega)\cap {V})^*)$.
	
	Fix $t\in(0,T)$.
	By \eqref{eq:transformed.function.estimates} we have 
	\begin{equation}\label{eq:transformed.function.estimates.applied.to.solution}
	[\solA(t)-\solA(t-h)]\psi(\solB(t-h))\leq \Psi^\star(t)-\Psi^\star(t-h) \leq [\solA(t)-\solA(t-h)]\psi(\solB(t))
	\end{equation}
	a.e.\ in $\Omega$.
	We write
	\[
	\lambda_\varepsilon:=\min\left\{1,\frac{1}{\varepsilon\abs{\psi(\solB)}}\right\}
	\]
	for $\varepsilon>0$, such that $\lambda_\varepsilon\abs{\psi(\solB)}=\min\{\varepsilon^{-1},\abs{\psi(\solB)}\}$.
	Note that $\lambda_\varepsilon(t)\psi(\solB(t))\in L^\infty(\Omega)\cap {V}$ and that $\lambda_\varepsilon(t)\psi(\solB(t))\to\psi(\solB(t))$ in $H^1(\Omega)$ as $\varepsilon\to 0$ by dominated convergence.
	Next, for fixed $h>0$ we have the estimates
	\begin{subequations}
		\begin{equation}\label{eq:chain.rule.first.estimate}
		\begin{aligned}
		\pinprod{\solA^h_t(t)}{\lambda_\varepsilon(t)\psi(\solB(t))}&=\int_{\Omega}\solA^h_t(t)\lambda_\varepsilon(t)\psi(\solB(t))\stackrel{\eqref{eq:Steklov.averaged.time-derivative.solution}}{=}\int_{\Omega}{\tfrac{1}{h}[\solA(t)-\solA(t-h)]}{\psi(\solB(t))\lambda_\varepsilon(t)}\\
		&\stackrel{\eqref{eq:transformed.function.estimates.applied.to.solution}}{\geq}
		\int_{\Omega}\tfrac{1}{h}\left[\Psi^\star(t)-\Psi^\star(t-h)\right]\lambda_\varepsilon(t)
		\end{aligned}
		\end{equation}
		for $t>0$ and
		\begin{equation}\label{eq:chain.rule.second.estimate}
		\begin{aligned}
		\pinprod{\solA^h_t(t)}{\lambda_\varepsilon(t-h)\psi(\solB(t-h))}&=\int_{\Omega}{\tfrac{1}{h}[\solA(t)-\solA(t-h)]}{\psi(\solB(t-h))\lambda_\varepsilon(t-h)}\\
		&\!\stackrel{\eqref{eq:transformed.function.estimates.applied.to.solution}}{\leq}
		\int_{\Omega}\tfrac{1}{h}\left[\Psi^\star(t)-\Psi^\star(t-h)\right]\lambda_\varepsilon(t-h)
		\end{aligned}
		\end{equation}
	\end{subequations}
	for $t>h$.
	
	Now, suppose \textit{(i)} holds.
	Our aim is to show that
	\begin{align}\label{eq:chain,rule.main.identity}
	\int_{\Omega}\Psi^\star(\solA(\tau),\bar{\solA})=\int_{\Omega}\Psi^\star(\solA_0;\bar{\solA})+\int_0^\tau(\solA_t(t),\psi(\solB(t)))\md t
	\end{align}
	holds for almost every $\tau\in(0,T)$ by passing to the limits $\varepsilon\to 0$ and $h\to 0$ in the estimates above.
	
	First, we show that $\Psi^\star(\solA;\bar{\solA})\in L^1(\Omega_T)$ so that we can pass to the limit $\varepsilon\to 0$ in \eqref{eq:chain.rule.first.estimate} and \eqref{eq:chain.rule.second.estimate} by dominated convergence.
	By \eqref{eq:transformed.function.positive.estimates} we have that $\Psi^\star$ is non-negative and therefore, $\int_{\Omega}\lambda_\varepsilon\Psi^\star\to \int_{\Omega}\Psi^\star$ a.e.\ in $(0,T)$ and $\int_{{\tau_1}}^{\tau_2}\int_{\Omega}\lambda_\varepsilon\Psi^\star\to\int_{{\tau_1}}^{\tau_2}\int_{\Omega}\Psi^\star$ as $\varepsilon\to 0$ for any $0\leq\tau_1\leq\tau_2\leq T$ by monotone convergence.
	Moreover, $\Psi^\star(\solA_0;\bar{\solA})\in L^1(\Omega_T)$ and hence, $\Psi^\star(\solA_0;\bar{\solA})\lambda_\varepsilon(t)\to \Psi^\star(\solA_0;\bar{\solA})$ in $L^1(\Omega)$ for almost every $t$ by dominated convergence.
	These convergences hold if we replace $\lambda_\varepsilon$ by $\lambda_\varepsilon(\emptyarg-h)$ as well.
	Rearranging the terms in \eqref{eq:chain.rule.first.estimate} and passing to the limit $\varepsilon\to 0$ we conclude that for almost every $0<t<h$ the estimate
	\[
	\frac{1}{h}\int_{\Omega}\Psi^\star(\solA(t);\bar{\solA})\leq\frac{1}{h}\int_{\Omega}\Psi^\star(\solA_0;\bar{\solA})+\pinprod{\solA^h_t(t)}{\psi(\solB(t))}
	\]
	holds.
	It follows that $\Psi^\star(\solA;\bar{\solA})\in L^1(0,h;L^1(\Omega))$.
	Repeating the argument for $h<t<2h$ we conclude that $\Psi^\star(\solA)\in L^1(0,2h;L^1(\Omega))$.
	By iteration, it follows that $\Psi^\star(\solA;\bar{\solA})\in L^1(\Omega_T)$.
	
	Now consider \eqref{eq:chain.rule.first.estimate}, let $\varepsilon\to 0$ and integrate over $t\in(0,\tau)$ for some $\tau\in [0,T]$ to obtain
	\begin{subequations}
		\begin{equation}\label{eq:chain_rule_estimate.starter.side1}
		\frac{1}{h}\int_{\tau-h}^{\tau}\int_{\Omega}\Psi^\star
		\leq \int_{\Omega}\Psi^\star(\solA_0;\bar{\solA})+\int_{0}^{\tau}\pinprod{\solA^h_t}{\psi(\solB)}.
		\end{equation}
		Also, consider \eqref{eq:chain.rule.second.estimate}, let $\varepsilon\to 0$ and integrate over $t\in(h,\tau)$ to obtain
		\begin{equation}\label{eq:chain_rule_estimate.starter.side2}
		\int_h^\tau\pinprod{\solA^h_t(t)}{\psi(\solB(t-h))}\md t\leq
		\frac{1}{h}\int_{\tau-h}^{\tau}\int_{\Omega}\Psi^\star
		-\frac{1}{h}\int_0^h\int_{\Omega}\Psi^\star(\solA;\bar{\solA}).
		\end{equation}
	\end{subequations}
	Combining \eqref{eq:chain_rule_estimate.starter.side1} and \eqref{eq:chain_rule_estimate.starter.side2} then yields
	\begin{equation}\label{eq:chain_rule_estimate}
	\begin{aligned}
	&\int_h^\tau\pinprod{\solA^h_t(t)}{\psi(\solB(t-h))}\md t+\frac{1}{h}\int_0^h\int_{\Omega}\left(\Psi^\star(\solA(t);\bar{\solA})-\Psi^\star(\solA_0;\bar{\solA})\right)\md t\\
	&\qquad\leq \frac{1}{h}\int_{\tau-h}^{\tau}\int_{\Omega}\Psi^\star
	-\int_{\Omega}\Psi^\star(\solA_0;\bar{\solA})
	\leq\int_0^{{\tau}}\pinprod{\solA_t^h(t)}{\psi(\solB(t))}\md t
	\end{aligned}
	\end{equation}
	for all $\tau\in[h,T]$.
	From \Cref{lem:Steklov.properties,lem:Steklov.averaged.solution.convergence,lem:time-shift.convergence} it follows that
	\begin{equation}\label{chain.rule.convergences.as.h->0}
	\left\{
	\begin{gathered}
	\int_{\Omega}\left[\Psi^\star\right]^h(\tau)\to\int_{\Omega}\Psi^\star(\tau)\quad\text{for almost every}\ \tau\in (0,T),\\
	\solA^h_t\to \solA_t\quad\text{in}\ L^2(0,T;{V}^*)\ \text{and}\\
	\int_{h}^T\|\psi(\solB(t-h))-\psi(\solB(t))\|_{H^1(\Omega)}^2\md t\to 0
	\end{gathered}
	\right.
	\end{equation}
	as $h\to 0$, respectively.
	Consequently, every term in \eqref{eq:chain_rule_estimate}, except for the second term, converges as $h\to 0$ for almost every $\tau\in(0,T)$, and the first and last term have the same limit.
	Moreover, by passing to the limit $h\to 0$ in \eqref{eq:chain_rule_estimate} we infer from the second estimate that $\Psi^\star(\solA;\bar{\solA})\in L^\infty(0,T;L^1(\Omega))$.
	
	The missing ingredient to conclude \eqref{eq:chain,rule.main.identity} is the nonnegativity of the second term of
	\eqref{eq:chain_rule_estimate} in the limit $h\to 0$.
	In other words, it is left to show that
	\begin{equation}\label{eq:chain_rule.notconverging.term}
	\liminf_{h\to 0}\frac{1}{h}\int_0^h\int_{\Omega}\left(\Psi^\star(\solA(t);\bar{\solA})-\Psi^\star(\solA_0;\bar{\solA})\right)\md t\geq 0.
	\end{equation}
	First, we approximate $\Psi^\star$ by
	\[
	\Psi^\star_R(z;\bar{z}):=\int_{\bar{z}}^{z}\psi_R(\nonlin(\tilde{z})-\nonlin(\bar{z}))\md \tilde{z},\quad \psi_R:=\psi\cdot\max\left\{1,\frac{R}{\abs{\psi}}\right\},\quad R>0.
	\]
	We note that $0\leq \Psi^\star_R(z;\bar{z})\leq\Psi^\star(z;\bar{z})$, since $\psi_R(0)=\psi(0)=0$ and $\abs{\psi_R}\leq\abs{\psi}$.
	It follows that $\Psi^\star_R(\solA_0;\bar{\solA})\nearrow\Psi^\star(\solA_0;\bar{\solA})$ a.e.\ in $\Omega$ as $R\to \infty$ and the convergence holds in $L^1(\Omega)$ as well by dominated convergence and since $\Psi^\star(\solA_0;\bar{\solA})\in L^1(\Omega)$.
	Next, pick $\solB_R:\Omega\to{I}$, $\solB_R\in L^\infty(\Omega)$, $\nonlin(\solB_R)\in \nonlin(\bar{\solA})+{V}$ such that $\norm{\solA_0-\solB_R}_{L^1(\Omega)}\leq R^{-2}$.
	Then, $\norm{\Psi^\star_R(\solA_0;\bar{\solA})-\Psi^\star_R(\solB_R;\bar{\solA})}_{L^1(\Omega)}\leq\norm{\psi_R}_{L^\infty(\IR)}\norm{\solA_0-\solB_R}_{L^1(\Omega)}\leq R^{-1}$ and $\psi_R(\nonlin(\solB_R)-\nonlin(\bar{\solA}))\in L^\infty(\Omega)\cap V$.
	Let $\delta>0$ be arbitrary and let $R>\frac{1}{\delta}$ be large enough such that $\norm{\Psi^\star(\solA_0;\bar{\solA})-\Psi^\star_R(\solA_0;\bar{\solA})}_{L^1(\Omega)}< \delta$. Then,
	\begin{align*}
	&\int_{\Omega}\left(\Psi^\star(\solA(t);\bar{\solA})-\Psi^\star(\solA_0;\bar{\solA})\right)\geq  \int_{\Omega}\left(\Psi^\star(\solA(t);\bar{\solA})-\Psi^\star_R(\solA_0;\bar{\solA})\right)-\delta\\
	&\geq \int_{\Omega}\left(\Psi^\star_R(\solA(t);\bar{\solA})-\Psi^\star_R(\solA_0;\bar{\solA})\right)-\delta
	\geq \int_{\Omega}\left(\Psi^\star_R(\solA(t);\bar{\solA})-\Psi^\star_R(\solB_R;\bar{\solA})\right)-2\delta\\
	&\!\!\stackrel{\eqref{eq:transformed.function.estimates}}{\geq} \int_{\Omega}\psi_R(\nonlin(\solB_R)-\nonlin(\bar{\solA}))(\solA(t)-\solB_R)-2\delta
	\geq \int_{\Omega}\psi_R(\nonlin(\solB_R)-\nonlin(\bar{\solA}))(\solA(t)-\solA_0)-3\delta
	\end{align*}
	for almost every $t$, where the choice of $R$ does not depend on $t$.
	Finally, \Cref{lem:initial.cond.alternative} and the fact that $\solA\in C([0,T];(L^\infty(\Omega)\cap{V})^*)$ implies that for fixed $R$ the term $\int_{\Omega}\psi_R(\nonlin(\solB_R)-\nonlin(\bar{\solA}))(\solA(t)-\solA_0)$ vanishes as $t\to 0$.
	Therefore, given $\delta>0$ we pick $R>\frac{1}{\delta}$ as before and set $h_0>0$ small enough such that $\abs{\int_{\Omega}\psi_R(\nonlin(\solB_R)-\nonlin(\bar{\solA}))(\solA(t)-\solA_0)}<\delta$ for all $0<t<h_0$. We conclude that
	\begin{align*}
	\frac{1}{h}\int_0^h\int_{\Omega}\left(\Psi^\star(\solA;\bar{\solA})-\Psi^\star(\solA_0;\bar{\solA})\right)&\geq -4\delta
	\end{align*}
	for all $0<h<h_0$ which implies \eqref{eq:chain_rule.notconverging.term}.
	
	We conclude that \eqref{eq:chain,rule.main.identity} holds.
	The right-hand side is continuous with respect to $\tau$ and hence, the map $\tau\mapsto\int_{\Omega}\Psi^\star(\solA(\tau),\bar{\solA})$ can be represented by an absolutely continuous function and its derivative is given by \eqref{eq:chain.rule.in.time}, as desired.
	
	Now, suppose that \textit{(ii)} holds.
	Then, we apply \eqref{eq:transformed.function.estimates} for $z_1=\solA$ and $z_2=\bar{\solA}$ and observe that $\Psi^\star(\bar{\solA};\bar{\solA})=0$ implies that $\abs{\Psi^\star(\solA;\bar{\solA})}\leq \norm{\psi}_{L^\infty(\IR)}\abs{\solA-\bar{\solA}}$. Hence, $\Psi^\star\in L^\infty(0,T;L^1(\Omega))$.
	We pass to the limit $\varepsilon\to 0$ in \eqref{eq:chain.rule.first.estimate} and \eqref{eq:chain.rule.second.estimate} for almost every $t$ using dominated convergence. Then, integrating over $t\in(\tau_1,\tau_2)$ for some $\tau_1,\tau_2\in[0,T]$, $\tau_2\geq \tau_1$ and combining the estimates yields
	\begin{equation}\label{eq:chain_rule_estimate_bounded_version}
	\int_{\tau_1}^{\tau_2}\pinprod{\solA^h_t(t)}{\psi(\solB(t-h))}\md t\leq\int_{\Omega}(\Psi^\star)^h(\tau_1)-\int_{\Omega}(\Psi^\star)^h(\tau_2)\leq\int_{\tau_1}^{\tau_2}\pinprod{\solA^h_t(t)}{\psi(\solB(t))}\md t.
	\end{equation}
	We can now pass to the limit $h\to 0$ for almost every $\tau_1,\tau_2\in(0,T)$ which is justified by the limits \eqref{chain.rule.convergences.as.h->0}.
	We conclude that
	\begin{align*}
	\int_{\Omega}\Psi^\star(\solA(\tau_2),\bar{\solA})=\int_{\Omega}\Psi^\star(\solA(\tau_1);\bar{\solA})+\int_{\tau_1}^{\tau_2}(\solA_t(t),\psi(\solB(t)))\md t
	\end{align*}
	holds for almost every $\tau_1,\tau_2\in(0,T)$ and hence, the map $\tau\mapsto\int_{\Omega}\Psi^\star(\solA(\tau),\bar{\solA})$ can be represented by an absolutely continuous function and its derivative is given by \eqref{eq:chain.rule.in.time}.
\end{proof}

We remark that the proof of \Cref{prop:chain.rule.in.time} can be simplified in the case that ${I}$ is bounded.
Indeed, then $\solA$ and $\solA_t^h$ are elements of $L^2(\Omega_T)$ and  we can 
take the $L^2$-inner product of $\solA_t^h$ with
$\psi(\nonlin(\solA)-\nonlin(\bar{\solA}))$ replacing \eqref{eq:chain.rule.first.estimate} and \eqref{eq:chain.rule.second.estimate}.
In particular, we do not need the auxiliary parameter $\varepsilon$.
Moreover, \eqref{eq:transformed.function.estimates} implies that $\Psi^\star(\solA;\bar{\solA})\in L^2(\Omega_T)$ and therefore, we can proceed with the proof as in \Cref{prop:chain.rule.in.time} \textit{(ii)}.

\begin{remark}[Linearity with respect to $\Psi$]\label{rem:chain.rule.linearity}
	The hypotheses and statements of \Cref{prop:chain.rule.in.time} are linear with respect to $\Psi$, with the exception of the assumption $\Psi^\star(\solA_0;\bar{\solA})\in L^1(\Omega)$.
	Therefore, \Cref{prop:chain.rule.in.time} holds for any linear combination of $\Psi$'s provided that $\Psi^\star(\solA_0;\bar{\solA})\in L^1(\Omega)$ holds for each $\Psi$.
	This extra assumption is trivially satisfied if $\psi$ is bounded.
	Therefore, we can apply the proposition for the sum of a convex and concave function.
\end{remark}

Observe that in the proof of \Cref{prop:chain.rule.in.time} \textit{(i)} the assumption on the initial data $\solA_0$ is essential to conclude that 
$\Psi^\star(\solA;\bar{\solA})\in L^\infty(0,T;L^1(\Omega))$.
In case \textit{(ii)} this is not necessary by the boundedness of $\psi$.
The latter case allows us to consider sub- and supersolutions.

\begin{corollary}[Chain rule for sub(super)solutions]\label{cor:chain_rule.sub(super)solution}
	Suppose the hypotheses of \Cref{prop:chain.rule.in.time} \textit{(ii)} are satisfied.
	Let $\solA_0:\Omega\to{I}$ be such that $\solA_0\in L^1(\Omega)$ and \eqref{eq:SD-PME.solution.including.time-derivative.init.cond.id} holds with $=$ replaced by $\geq$ ($\leq$) and $\testA(0)\geq 0$ instead and let $\Psi$ be convex (concave).
	Then, the conclusion of \Cref{prop:chain.rule.in.time} is valid and the absolutely continuous representative $\theta$ of $t\mapsto\int_{\Omega}\Psi^\star(\solA(t);\bar{\solA})$ satisfies
	\begin{equation}\label{eq:chain.rule.in.time.estimate.sub-supersol}
	\theta(0)\ \leq\ (\geq)\ \ \int_{\Omega}\Psi^\star(\solA_0,\bar{\solA}).
	\end{equation}
\end{corollary}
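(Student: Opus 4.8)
The chain-rule part of the conclusion is essentially automatic: the proof of \Cref{prop:chain.rule.in.time} \textit{(ii)} never uses the initial condition. It uses only that $\psi$ is bounded to deduce $\Psi^\star(\solA;\bar{\solA})\in L^\infty(0,T;L^1(\Omega))$ from \eqref{eq:transformed.function.estimates}, and then passes to the limit in \eqref{eq:chain_rule_estimate_bounded_version} on interior intervals $(\tau_1,\tau_2)\subset(0,T)$. Hence, under the present hypotheses one already obtains the absolutely continuous representative $\theta$ with $\theta'=\pinprod{\solA_t}{\psi(\nonlin(\solA)-\nonlin(\bar{\solA}))}$ a.e., together with $\theta(0)=\lim_{\tau\to0^+}\int_\Omega\Psi^\star(\solA(\tau);\bar{\solA})$ by continuity of $\theta$. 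It therefore remains only to prove the one-sided bound on $\theta(0)$, and here the subsolution initial condition replaces the exact initial condition used in part \textit{(i)}.

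\textbf{Reducing to a boundary term.} My plan is to re-run the \emph{upper} estimate from the proof of \Cref{prop:chain.rule.in.time} \textit{(i)}, i.e.\ the one producing \eqref{eq:chain_rule_estimate.starter.side1}, extending $\solA(t):=\solA_0$ for $t<0$. The right-hand inequality in \eqref{eq:transformed.function.estimates.applied.to.solution} is purely algebraic and holds irrespective of the type of initial condition; integrating \eqref{eq:chain.rule.first.estimate} over $t\in(0,\tau)$ and using $\solA(t-h)=\solA_0$ for $t<h$ gives, for a.e.\ $\tau$,
\[
\frac1h\int_{\tau-h}^{\tau}\int_\Omega\Psi^\star(\solA;\bar{\solA})-\int_\Omega\Psi^\star(\solA_0;\bar{\solA})\ \le\ \int_0^\tau\pinprod{\solA_t^h}{\psi(\nonlin(\solA)-\nonlin(\bar{\solA}))},
\]
with $\solA_t^h$ formed from the $\solA_0$-extension. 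Letting $h\to0$, the left-hand side tends to $\theta(\tau)-\int_\Omega\Psi^\star(\solA_0;\bar{\solA})$ for a.e.\ $\tau$. Splitting the right-hand side into its interior part over $(h,\tau)$ — which converges to $\int_0^\tau\pinprod{\solA_t}{\psi(\cdot)}=\theta(\tau)-\theta(0)$ by the convergences \eqref{chain.rule.convergences.as.h->0} — and the boundary contribution
\[
I_h:=\frac1h\int_0^h\int_\Omega(\solA(t)-\solA_0)\,\psi(\nonlin(\solA(t))-\nonlin(\bar{\solA})),
\]
reduces the whole claim to $\liminf_{h\to0}I_h\le0$, after which $\theta(\tau)-\int_\Omega\Psi^\star(\solA_0;\bar{\solA})\le\theta(\tau)-\theta(0)$ yields $\theta(0)\le\int_\Omega\Psi^\star(\solA_0;\bar{\solA})$.

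\textbf{Controlling $I_h$ with the correct sign.} In the exact-data case of \Cref{prop:chain.rule.in.time} \textit{(i)} the term $I_h$ vanishes; here it must instead be shown to have nonpositive $\liminf$, and this is where the subsolution hypothesis enters. By \Cref{lem:initial.cond.alternative} the assumption on $\solA_0$ is equivalent to $\pinprod{\solA(0)}{\testC}\le\int_\Omega\solA_0\testC$ for all $0\le\testC\in L^\infty(\Omega)\cap V$, and $\solA\in C([0,T];(L^\infty(\Omega)\cap V)^*)$ gives $\solA(t)\to\solA(0)$ in that space as $t\to0$. I would approximate the weight $\psi(\nonlin(\solA(t))-\nonlin(\bar{\solA}))$ by a fixed, appropriately signed test function $\testC\in L^\infty(\Omega)\cap V$, exactly as in part \textit{(i)}: truncating $\psi$ and replacing $\solA_0$ by a better function $\solB_R$ with $\nonlin(\solB_R)\in\nonlin(\bar{\solA})+V$, and then invoking $\pinprod{\solA(0)-\solA_0}{\testC}\le0$ to conclude $\liminf_h I_h\le0$. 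The supersolution/concave statement then follows by applying the convex/subsolution argument to $-\Psi$, upon which all inequalities reverse and the $(\geq)$ conclusion is obtained.

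\textbf{Main obstacle.} The delicate point is precisely the last step. The convexity estimate forces the slope appearing in $I_h$ to be evaluated along $\solA(t)$, which converges only weakly (in $(L^\infty(\Omega)\cap V)^*$), so one cannot pass to the limit in the product $(\solA(t)-\solA_0)\,\psi(\cdots)$ directly. The remedy is the $L^1$--$L^\infty$ approximation of \Cref{prop:chain.rule.in.time} \textit{(i)}, which replaces that moving slope by fixed elements of $L^\infty(\Omega)\cap V$ against which the functional inequality $\solA(0)\le\solA_0$ can be tested; ensuring that these fixed weights carry the sign needed for $\pinprod{\solA(0)-\solA_0}{\testC}\le0$ to push $I_h$ in the right direction is the heart of the argument, and is what makes the one-sided chain rule genuinely more subtle than its exact-data counterpart.
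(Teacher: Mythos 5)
Your reading of \Cref{prop:chain.rule.in.time} \textit{(ii)} is correct (it nowhere uses the initial condition, so the absolutely continuous representative $\theta$ and the identity \eqref{eq:chain.rule.in.time} come for free), and your reduction of \eqref{eq:chain.rule.in.time.estimate.sub-supersol} to $\liminf_{h\to 0}I_h\leq 0$ with
\[
I_h=\frac{1}{h}\int_0^h\int_{\Omega}(\solA(t)-\solA_0)\,\psi\bigl(\nonlin(\solA(t))-\nonlin(\bar{\solA})\bigr)\md t
\]
is algebraically sound; the same term (weighted by a cut-off $\testC$ with $\testC(0)=1$) appears in the paper's own proof. The genuine gap is exactly the step you flag as the ``main obstacle'': the part-\textit{(i)}-style approximation cannot deliver this sign. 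In part \textit{(i)}, convexity enters through the \emph{left} inequality of \eqref{eq:transformed.function.estimates}, which bounds increments \emph{from below} by a linear term whose slope sits at a \emph{fixed} function $\solB_R$; that is the only direction that convexity (equivalently, monotonicity of $\psi(\nonlin(\emptyarg)-\nonlin(\bar{\solA}))$) provides. For $I_h$ you need an \emph{upper} bound on the moving-slope expression $(\solA(t)-\solA_0)\psi(\nonlin(\solA(t))-\nonlin(\bar{\solA}))$, and no fixed-slope upper bound exists: replacing the moving weight by a fixed $\testC\in L^\infty(\Omega)\cap V$ leaves the error $\frac{1}{h}\int_0^h\pinprod{\solA(t)-\solA_0}{\psi(\nonlin(\solA(t))-\nonlin(\bar{\solA}))-\testC}\md t$, a pairing of two quantities that converge only weakly ($\solA(t)-\solA_0$ in $(L^\infty(\Omega)\cap V)^*$; the weight in no pointwise-in-$t$ sense, being merely an element of $\mcV$), with no compensating compactness. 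Testing \eqref{eq:initial.cond.alternative} against fixed non-negative weights therefore cannot close the argument; every convexity or monotonicity estimate you can write down pushes $I_h$ from below, never from above.

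The paper obtains the sign from the weak inequality itself rather than from \Cref{lem:initial.cond.alternative}: it inserts into \eqref{eq:SD-PME.solution.including.time-derivative.init.cond.id} (with $\geq$) the \emph{forward} Steklov average $\testA_h$ of the time-dependent test function $\testC\,\psi(\nonlin(\solA)-\nonlin(\bar{\solA}))$, $\testC\in C^\infty_c((-\infty,T))$, $\testC\geq 0$, $\testC(0)=1$, which is admissible because $\psi$ is bounded and $\psi(\nonlin(\solA)-\nonlin(\bar{\solA}))\in\mcV$. Partial summation produces precisely your boundary term together with an interior term; since both carry a minus sign, the \emph{right} inequality of \eqref{eq:transformed.function.estimates} is now usable and converts them into increments of $\Psi^\star$; the chain rule already established in part \textit{(ii)} then identifies the limits, the interior contribution cancels against $\lim_{h\to 0}\int_0^T\pinprod{\solA_t}{\testA_h}$, and the boundary contribution converges to $\theta(0)-\int_{\Omega}\Psi^\star(\solA_0;\bar{\solA})$, which is the assertion. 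A further caution: your extension $\solA(t)=\solA_0$ for $t<0$ is not covered by \Cref{lem:Steklov.averaged.solution.convergence}, which requires \eqref{eq:SD-PME.solution.including.time-derivative.init.cond.id} with equality; for a subsolution the extended function has a jump at $t=0$, so $\partial_t\solA^h\neq(\solA_t)^h$ near $0$ and the convergence of $\solA_t^h$ in \eqref{chain.rule.convergences.as.h->0} fails there (the paper explicitly refrains from extending). Your interior/boundary splitting sidesteps this if you work with the difference quotient \eqref{eq:Steklov.averaged.time-derivative.solution} directly, but the missing upper bound on $I_h$ remains an unfilled hole in the proposal.
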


\begin{proof}
	We assume that \eqref{eq:SD-PME.solution.including.time-derivative.init.cond.id} holds with $=$ replaced by $\geq$ and $\Psi$ is convex. The other case is proven similarly.
	We adopt the notation in the proof of \Cref{prop:chain.rule.in.time}, however, 
	we do not set $\solA(t)=\solA_0$ for $t<0$.
	Since all hypotheses are satisfied, the conclusion of \Cref{prop:chain.rule.in.time} holds.
	We denote by $\int_{\Omega}\Psi^\star(\solA(0);\bar{\solA})$ the absolutely continuous representative of $t\mapsto\int_{\Omega}\Psi^\star(\solA(t);\bar{\solA})$ evaluated at $t=0$.
	
	Let $\testC\in C^\infty_c((-\infty,T))$, $\testC\geq 0$, write $\testA=\testC\psi(\solB)$ and, for simplicity, assume $\testC(0)=1$.
	Define the \textit{forward} Steklov averaged function
	\[
	\testA_h(t):=\testA^{-h}(t)=\frac{1}{h}\int_{t}^{t+h}\testA(s)\md s,
	\]
	then $\testA_h\in W^{1,1}(0,T;L^\infty(\Omega))\cap L^2(0,T;{V})$.
	We employ partial summation, that is,
	\begin{align*}
	\iint_{\Omega_T}(\solA-\solA_0)\partial_t\testA_h&=\int_h^T\int_{\Omega}\frac{\solA(t-h)-\solA_0}{h}\testA(t)\md t-\int_0^T\int_{\Omega}\frac{\solA(t)-\solA_0}{h}\testA(t)\md t\\
	&=-\int_h^T\int_{\Omega}(\partial_t\solA^h)\psi(\solB)\testC-\frac{1}{h}\int_0^h\int_{\Omega}(\solA-\solA_0)\psi(\solB)\testC\\
	&\!\stackrel{\eqref{eq:transformed.function.estimates}}{\leq}-\int_{h}^{T}\int_{\Omega}\partial_t\Psi^\star(\solA;\bar{\solA})^h\testC-\frac{1}{h}\int_0^h\int_{\Omega}\left(\Psi^\star(\solA;\bar{\solA})-\Psi^\star(\solA_0;\bar{\solA})\right)\testC.
	\end{align*}
	Since $\theta\in W^{1,2}(0,T;\mathbb{R})$, \Cref{lem:Steklov.properties} implies that $\left(\frac{\md}{\md t}\theta\right)^h=\frac{\md}{\md t}\left(\theta^h\right)$ and therefore,
	\begin{align*}
	\int_{h}^{T}\int_{\Omega}\partial_t\Psi^\star(\solA;\bar{\solA})^h\testC&=\int_{h}^{T}\int_{\Omega}\left(\Psi^\star(\solA(t);\bar{\solA})-\Psi^\star(\solA(t-h);\bar{\solA})\right)\testC(t)\md t=\int_{h}^T \frac{\md}{\md t}\left(\theta^h\right)\testC\\
	&=\int_{h}^{T}\left(\frac{\md}{\md t}\theta\right)^h\testC\stackrel{\eqref{eq:chain.rule.in.time}}{=}\int_{h}^{T}[\pinprod{\solA_t}{\psi(\solB)}]^h\testC\to \int_0^T\pinprod{\solA_t}{\psi(\solB)}\testC
	\end{align*}
	as $h\to 0$.
	By \eqref{eq:transformed.function.positive.estimates} we have that $0\leq\Psi^\star(\solA_0;\bar{\solA})\leq\norm{\psi}_{L^\infty(\Omega)}\abs{\solA_0-\bar{\solA}}$ so $\Psi^\star(\solA_0;\bar{\solA})\in L^1(\Omega)$ and thus,
	\begin{align*}
	\frac{1}{h}\int_0^h\int_{\Omega}\left(\Psi^\star(\solA;\bar{\solA})-\Psi^\star(\solA_0;\bar{\solA})\right)\testC\to \theta(0)-\int_{\Omega}\Psi^\star(\solA_0;\bar{\solA})\quad\text{as}\ h\to 0.
	\end{align*}
	Finally, from \eqref{eq:SD-PME.solution.including.time-derivative.init.cond.id} 
	we infer that
	\begin{align*}
	0&\leq\lim_{h\to 0}\int_0^T\left(\pinprod{\solA_t}{\testA_h}+\int_{\Omega}(\solA_0-\solA)\partial_t\testA_h\right)\\
	&\leq\int_0^T\pinprod{\solA_t}{\psi(\solB)}\testC-\int_0^T\pinprod{\solA_t}{\psi(\solB)}\testC+\int_{\Omega}\left(\Psi^\star(\solA_0;\bar{\solA})\right)-\theta(0)\\
	&=\int_{\Omega}\left(\Psi^\star(\solA_0;\bar{\solA})\right)-\theta(0),
	\end{align*}
	as desired.
\end{proof}

\begin{remark}[Solutions satisfy the hypotheses]\label{rem:chain.rule.checking.main.condition}
	Let $\solA:\Omega_T\to {I}$  satisfy \textit{(i)} in \Cref{def:SD-PME.solution.including.time-derivative}. 
	By \Cref{lem:trace.interchanged.function}, condition \eqref{eq:chain.rule.in.time.main.condition} is satisfied provided $\nonlin(\bar{\solA})\in \nonlin(\solA^D)+{V}$ and $\psi(0)=0$.
	Moreover, suppose $\solA$ satisfies \textit{(i)} of \Cref{def:SD-PME.sub-super.solution.including.time-derivative} in the sense of a sub(super)solution and additionally assume $\psi(\zeta)=0$ for $\zeta\leq 0\ (\geq 0)$, then \eqref{eq:chain.rule.in.time.main.condition} holds as well.
\end{remark}

\begin{remark}
	\Cref{prop:chain.rule.in.time} \textit{(ii)} is a variant of Lemma 1 in \cite{Otto95}, which we prove using the structure of our equation.
	In \cite{Otto95}, the elliptic-parabolic equation 
	\[
	\partial_t\beta(\solB)=\mathrm{div} A(\emptyarg,\solB,\nabla\solB)+f(\solB)
	\]
	is considered, where $\beta$ is only assumed to be non-decreasing.
	In particular, $\beta$ is generally not invertible. Hence, its inverse $\nonlin$ may not exists and \eqref{eq:transformed.function.definition} is not well-defined.
	Therefore, the transform
	\[
	\Psi^\star(z;\bar{\zeta})=\sup_{\zeta\in\IR}\left(\psi(\zeta-\bar{\zeta})(z-\beta(\zeta))+\int_0^\zeta\psi(\tilde{\zeta}-\bar{\zeta})\beta(\md\tilde{\zeta})\right)
	\]
	is used instead, where $\Psi^\star(\emptyarg;\bar{\zeta}):\IR\to\IR\cup\{+\infty\}$ may attain $+\infty$.
\end{remark}

Next, we discuss the absolute continuity of the relative energy functional.

\begin{corollary}[Regularity of the energy functional]\label{cor:time-regularity.energy.functional}
	Suppose $\solA^D$ and $\solA_0$ satisfy \eqref{itm:Dirichlet.boundary.condtion} and \eqref{itm:initial.condtion}, respectively, and let $\solA:\Omega_T\to{I}$ satisfy \textit{(i)} and \textit{(ii)} in \Cref{def:SD-PME.solution.including.time-derivative}.
	Then, $\Nonlin(\solA)\in L^\infty(0,T;L^1(\Omega))$ and the mapping $t\mapsto\int_{\Omega}\Nonlin(\solA(t);\solA^D)$ has an absolutely continuous representative with derivative $\pinprod{\solA_t}{\nonlin(\solA)-\nonlin(\solA^D)}$ and it attains $\int_{\Omega}\Nonlin(\solA_0;\solA^D)$ for $t=0$.
\end{corollary}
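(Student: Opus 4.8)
The plan is to obtain the statement as a direct application of \Cref{prop:chain.rule.in.time} with the quadratic choice $\Psi(\zeta)=\tfrac12\zeta^2$ and reference point $\bar{\solA}=\solA^D$. With this choice $\psi:=\Psi'$ is the identity $\psi(\zeta)=\zeta$, and the transformed function \eqref{eq:transformed.function.definition} becomes
\[
\Psi^\star(z;\solA^D)=\int_{\solA^D}^z\bigl(\nonlin(\tilde z)-\nonlin(\solA^D)\bigr)\md\tilde z=\Nonlin(z;\solA^D),
\]
so that the transformed energy coincides exactly with the relative energy functional. Thus every conclusion of the proposition translates verbatim into a statement about $t\mapsto\int_\Omega\Nonlin(\solA(t);\solA^D)$.

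First I would check the hypotheses of \Cref{prop:chain.rule.in.time} in case \textit{(i)}. The conditions $\solA\in\mcW$ and $\nonlin(\solA)\in L^2(0,T;H^1(\Omega))$ follow from \textit{(i)} in \Cref{def:SD-PME.solution.including.time-derivative} together with $\nonlin(\solA^D)\in H^1(\Omega)$ from \eqref{itm:Dirichlet.boundary.condtion}. Since $\nonlin(\solA^D)\in L^\infty(\Omega)$ and $\beta:=\nonlin^{-1}$ is continuous, $\solA^D=\beta(\nonlin(\solA^D))$ is bounded, hence $\solA^D\in L^1(\Omega)$ and $\nonlin(\solA^D)\in L^\infty(\Omega)\cap H^1(\Omega)$ as required. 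The function $\Psi$ is smooth and convex, $\psi$ is piece-wise $C^1$ with bounded derivative and $\psi(0)=0$, and the structural condition \eqref{eq:chain.rule.in.time.main.condition}, namely $\nonlin(\solA)-\nonlin(\solA^D)\in\mcV$, is precisely part \textit{(i)} of \Cref{def:SD-PME.solution.including.time-derivative}. It remains to produce the initial datum required by \textit{(i)}: I would take $\solA_0$ itself, which lies in $L^1(\Omega)$ and satisfies \eqref{eq:SD-PME.solution.including.time-derivative.init.cond.id} by \textit{(ii)} of the definition, and check $\Psi^\star(\solA_0;\solA^D)=\Nonlin(\solA_0;\solA^D)\in L^1(\Omega)$. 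This last point is the only genuinely new computation: writing
\[
\Nonlin(\solA_0;\solA^D)=\Nonlin(\solA_0)-\Nonlin(\solA^D)-\nonlin(\solA^D)(\solA_0-\solA^D),
\]
the first term is in $L^1(\Omega)$ by \eqref{itm:initial.condtion}, while the remaining terms are products of the bounded functions $\Nonlin(\solA^D),\nonlin(\solA^D),\solA^D$ with $L^1$-functions, hence integrable.

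With the hypotheses verified, \Cref{prop:chain.rule.in.time} yields at once that $\Nonlin(\solA;\solA^D)\in L^\infty(0,T;L^1(\Omega))$, that $t\mapsto\int_\Omega\Nonlin(\solA(t);\solA^D)$ admits an absolutely continuous representative $\theta$ with $\theta'=\pinprod{\solA_t}{\nonlin(\solA)-\nonlin(\solA^D)}$ a.e.\ (this is \eqref{eq:chain.rule.in.time} specialised to $\psi(\zeta)=\zeta$), and that $\theta(0)=\int_\Omega\Nonlin(\solA_0;\solA^D)$. The only remaining assertion is the bound on the \emph{absolute} energy $\Nonlin(\solA)\in L^\infty(0,T;L^1(\Omega))$, which I would obtain exactly as in \Cref{rem:energy_uniform_bound}: from the identity above one has $\Nonlin(\solA)=\Nonlin(\solA;\solA^D)+\Nonlin(\solA^D)+\nonlin(\solA^D)(\solA-\solA^D)$, and applying \eqref{eq:estimate.by.energy.functional} with $\delta=\tfrac12\norm{\nonlin(\solA^D)}_{L^\infty(\Omega)}^{-1}$ to absorb $\abs{\nonlin(\solA^D)}\abs{\solA}\le\tfrac12\Nonlin(\solA)+C$ gives the pointwise bound $\Nonlin(\solA)\le C(\Nonlin(\solA;\solA^D)+1)$ a.e.; integrating over the bounded domain $\Omega$ and taking the essential supremum in $t$ finishes the proof. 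I expect the only mild obstacle to be this bookkeeping around the reference datum $\solA^D$, ensuring that its boundedness is used consistently to pass between relative and absolute energy, since the substantive analytic work is entirely contained in \Cref{prop:chain.rule.in.time}.
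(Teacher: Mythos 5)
Your proposal is correct and follows essentially the same route as the paper: apply \Cref{prop:chain.rule.in.time} with $\Psi(\zeta)=\tfrac12\zeta^2$ and $\bar{\solA}=\solA^D$, so that $\Psi^\star(\emptyarg;\solA^D)=\Nonlin(\emptyarg;\solA^D)$, and then pass from the relative to the absolute energy using the boundedness of $\nonlin(\solA^D)$. Your write-up is merely more explicit about verifying the hypotheses of the proposition (and about $\Nonlin(\solA_0;\solA^D)\in L^1(\Omega)$), which the paper leaves implicit.
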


\begin{proof}
	Set $\Psi(\zeta)=\frac{1}{2}\zeta^2$.
	Then $\psi(\zeta)=\zeta$ and $\Phi(\emptyarg;\bar{z})=\Psi^\star(\emptyarg;\bar{z})$.
	Apply \Cref{prop:chain.rule.in.time} to conclude that $t\mapsto\int_{\Omega}\Nonlin(\solA(t);\solA^D)$ has an absolutely continuous representative and that $\Nonlin(\solA;\solA^D)\in L^\infty(0,T;L^1(\Omega))$.
	Now, \eqref{itm:Dirichlet.boundary.condtion} implies that $\nonlin(\solA^D)(\solA-\solA^D)\in L^\infty(0,T;L^1(\Omega))$ and therefore $\Nonlin(\solA)=\Nonlin(\solA;\solA^D)+\nonlin(\solA^D)(\solA-\solA^D)\in L^\infty(0,T;L^1(\Omega))$.
\end{proof}

The following result is based on the proof of Lemma 2 in \cite{Otto95}.
The idea is to apply \Cref{prop:chain.rule.in.time} to sequences $\{\psi_n^\pm\}_{n=1}^\infty$ that converge to $\mathrm{sign}_+$ and $\mathrm{sign}_-$, respectively, where
\[
\mathrm{sign}_+(z):=\left\{\begin{array}{cl}
1&\text{if}\ z> 0,\\
0&\text{if}\ z\leq 0,
\end{array}\right.
\quad
\mathrm{sign}_-(z):=\left\{\begin{array}{cl}
1&\text{if}\ z< 0,\\
0&\text{if}\ z\geq 0.
\end{array}\right.
\]

\begin{proposition}\label{prop:sub.super.solution.initialdata}
	Assume that \eqref{itm:nonlin.basic.assumption}, \eqref{itm:Dirichlet.boundary.condtion} and \eqref{itm:initial.condtion} hold.
	Let $\solA$ satisfy \textit{(i)} and \textit{(ii)} in \Cref{def:SD-PME.sub-super.solution.including.time-derivative} in the sense of a sub(super)solution.
	Then,
	\begin{equation}\label{eq:sub.super.solution.initialdata}
	\lim_{t\to 0}\esssup_{(0,t)}\int_{\Omega}(\solA-\solA_0)_+=0\quad\left(\lim_{t\to 0}\esssup_{(0,t)}\int_{\Omega}(\solA-\solA_0)_-=0\right).
	\end{equation}
\end{proposition}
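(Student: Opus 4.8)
The plan is to prove the subsolution statement; the supersolution case follows by the symmetric construction (approximating $\mathrm{sign}_-$ instead of $\mathrm{sign}_+$, taking $\Psi$ concave, and invoking the supersolution branch of \Cref{cor:chain_rule.sub(super)solution}). Assume $\solA$ is a subsolution. First I would fix functions $\psi_n:\IR\to[0,1]$ that are non-decreasing, piece-wise $C^1$ with bounded derivative, vanish on $(-\infty,0]$ and equal $1$ on $[\delta_n,\infty)$ for some $\delta_n\downarrow 0$, and set $\Psi_n(\zeta)=\int_0^\zeta\psi_n$, so that $\Psi_n$ is convex and $C^1$ with $\psi_n(0)=0$. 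Since $\psi_n$ vanishes on $(-\infty,0]$, \Cref{rem:chain.rule.checking.main.condition} ensures that \eqref{eq:chain.rule.in.time.main.condition} holds whenever the reference function has trace at least $\nonlin(\solA^D)$ on $\Gamma$. Accordingly, for each $k$ I would pick $\bar{\solA}_k:\Omega\to I$ with $\nonlin(\bar{\solA}_k)\in L^\infty(\Omega)\cap H^1(\Omega)$, $\Tr\nonlin(\bar{\solA}_k)\ge\nonlin(\solA^D)$ a.e.\ on $\Gamma$, and $\bar{\solA}_k\to\solA_0$ in $L^1(\Omega)$; producing such references (mollify $\nonlin(\solA_0)$, truncate into the range of $\nonlin$ so that $\nonlin(\bar{\solA}_k)$ is bounded, and raise the trace in a thin layer near $\Gamma$) is a routine but slightly delicate approximation. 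Applying \Cref{cor:chain_rule.sub(super)solution} with $\Psi=\Psi_n$ and $\bar{\solA}=\bar{\solA}_k$ then yields an absolutely continuous representative $\theta_{n,k}$ of $t\mapsto\int_\Omega\Psi_n^\star(\solA(t);\bar{\solA}_k)$ with $\theta_{n,k}(0)\le\int_\Omega\Psi_n^\star(\solA_0;\bar{\solA}_k)\le\int_\Omega(\solA_0-\bar{\solA}_k)_+$, the last step using $\Psi_n^\star(z;\bar{\solA}_k)\le(z-\bar{\solA}_k)_+$, which is immediate from \eqref{eq:transformed.function.positive.estimates} and $\psi_n\le 1$.

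The decisive point is to estimate the gap between $\Psi_n^\star(\emptyarg;\bar{\solA}_k)$ and $(\emptyarg-\bar{\solA}_k)_+$ uniformly in the evaluation point. For $z>\bar{\solA}_k$,
\[
0\le (z-\bar{\solA}_k)_+-\Psi_n^\star(z;\bar{\solA}_k)=\int_{\bar{\solA}_k}^{z}\bigl(1-\psi_n(\nonlin(\tilde z)-\nonlin(\bar{\solA}_k))\bigr)\,\md\tilde z\le \beta\bigl(\nonlin(\bar{\solA}_k)+\delta_n\bigr)-\bar{\solA}_k,
\]
since the integrand vanishes once $\nonlin(\tilde z)-\nonlin(\bar{\solA}_k)\ge\delta_n$; for $z\le\bar{\solA}_k$ both sides vanish. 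The right-hand side is independent of $z$, so integrating in $x$ gives $\int_\Omega(\solA(s)-\bar{\solA}_k)_+\le\theta_{n,k}(s)+E_{n,k}$ for a.e.\ $s$, where $E_{n,k}:=\int_\Omega\bigl(\beta(\nonlin(\bar{\solA}_k)+\delta_n)-\bar{\solA}_k\bigr)$ does not depend on $s$ and, because $\nonlin(\bar{\solA}_k)$ is bounded and $\beta$ is continuous, $E_{n,k}\to 0$ as $n\to\infty$ for fixed $k$ by dominated convergence. Combining this with $(\solA(s)-\solA_0)_+\le(\solA(s)-\bar{\solA}_k)_++(\bar{\solA}_k-\solA_0)_+$ gives, for a.e.\ $s$,
\[
\int_\Omega(\solA(s)-\solA_0)_+\le \theta_{n,k}(s)+E_{n,k}+\norm{\solA_0-\bar{\solA}_k}_{L^1(\Omega)}.
\]
Taking $\esssup$ over $s\in(0,t)$, then letting $t\to0$ (using that $\theta_{n,k}$ is continuous, so $\sup_{(0,t)}\theta_{n,k}\to\theta_{n,k}(0)$), then $n\to\infty$, and finally $k\to\infty$, the bound $\theta_{n,k}(0)\le\norm{\solA_0-\bar{\solA}_k}_{L^1(\Omega)}$ together with $E_{n,k}\to0$ yields $\lim_{t\to0}\esssup_{(0,t)}\int_\Omega(\solA-\solA_0)_+=0$.

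The main obstacle is the interaction between the essential supremum over $(0,t)$ and the regularization index $n$. The direct approach---writing $\theta_{n,k}(s)=\theta_{n,k}(0)+\int_0^s\pinprod{\solA_\tau}{\psi_n(\nonlin(\solA)-\nonlin(\bar{\solA}_k))}\,\md\tau$ and sending $t\to0$---breaks down because the correction term is not controlled uniformly in $n$: the $H^1$-norm of $\psi_n(\nonlin(\solA)-\nonlin(\bar{\solA}_k))$ blows up like $1/\delta_n$, and since only \textit{(i)} and \textit{(ii)} (not the equation \textit{(iii)}) are available, $\theta_{n,k}'$ carries no usable sign. The construction above deliberately avoids any uniform control of $\theta_{n,k}'$: the continuity in $t$ is used only for each fixed $n$, while the passage $n\to\infty$ is routed entirely through the $s$-uniform, derivative-free error $E_{n,k}$, whose smallness rests on the regularity of the reference $\bar{\solA}_k$ rather than on any bound for $\solA(s)$. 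The remaining technical issue is the construction of admissible references $\bar{\solA}_k$ satisfying the boundary-trace condition required by \eqref{eq:chain.rule.in.time.main.condition}.
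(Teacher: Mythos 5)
Your argument is correct and follows essentially the same route as the paper's proof: both approximate $\mathrm{sign}_+$ by bounded non-decreasing $\psi$'s vanishing on $(-\infty,0]$, approximate $\solA_0$ by references $\bar{\solA}_k$ with $\nonlin(\bar{\solA}_k)\in L^\infty(\Omega)\cap H^1(\Omega)$ and admissible trace, invoke \Cref{cor:chain_rule.sub(super)solution} to get $\theta(0)\leq\int_\Omega\Psi^\star(\solA_0;\bar{\solA}_k)$, and control the gap to $(\emptyarg-\bar{\solA}_k)_+$ by the same $z$-uniform bound $\beta(\nonlin(\bar{\solA}_k)+\delta)-\bar{\solA}_k$. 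The only differences are cosmetic bookkeeping: you keep the two indices $n,k$ separate and pass to limits sequentially via the explicit error $E_{n,k}$, whereas the paper diagonalizes with a single sequence $\delta_k$ tied to $\bar{\solA}_k$.
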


\begin{proof}
	Let $\Psi$ be a smooth, convex function such that 
	\begin{equation}\label{eq:solution.is.continuous.mapping.convex.function.defining.properties}
	\Psi\equiv 0\quad\text{on}\quad (-\infty,0]\quad \text{and} \quad\Psi(\zeta)=\zeta-\frac{1}{2}\quad\text{for }\zeta\geq 1.
	\end{equation}
	For $\delta>0$ we set
	\[
	\Psi_\delta(\zeta)=\delta\Psi\left(\frac{\zeta}{\delta}\right).
	\]
	Further, we write $\psi:=\Psi'$ and $\psi_\delta:=\Psi_\delta'$
	Clearly, $0\leq\psi_\delta\leq \chi_{[0,\infty)}$, so from the definition of the transform in \eqref{eq:transformed.function.definition} we see that
	\begin{align*}
	0\leq (z-\bar{z})_+-\Psi^*_\delta(z;\bar{z})&=\left(\int_{\bar{z}}^z(1-\psi_\delta(\nonlin(\tilde{z})-\bar{\zeta}))\md \tilde{z}\right)_+
	=\begin{cases}
	0&\text{if}\ z\leq \bar z\ \text{or}\ \nonlin(z)\geq \bar{\zeta}+\delta,\\
	z-\bar z &\text{if}\ z\in[\bar z, \beta(\bar{\zeta}+\delta)]
	\end{cases}\\
	&\leq  \beta(\bar{\zeta}+\delta)-\bar z= \beta(\bar{\zeta}+\delta)-\beta(\bar \zeta).
	\end{align*}
	We conclude that 
	$0\leq\Psi^\star_\delta(z;\bar{z})\leq (z-\bar{z})_+$ and,
	since $\beta$ is uniformly continuous on compact intervals, it follows that
	\begin{equation}\label{eq:solution.is.continuous.mapping.properties.of.convex.function}
	\Psi^\star_\delta(z;\bar{z})\to (z-\bar{z})_+\quad\text{as $\delta\to 0$}
	\end{equation}
	for all $z,\bar{z}\in{I}$ and the convergence is uniform with respect to $z$ and $\bar{z}$ provided $\bar{z}\in{\tilde I}$ for some compact subinterval $\tilde I\subset {I}$.
	
	Now, suppose that $u$ is a function satisfying the hypotheses of the proposition as a subsolution.
	Moreover, let $\bar{\solA}:\Omega\to{I}$ be such that $\bar{\solA}\in L^1(\Omega)$ and $\nonlin(\bar{\solA})\in L^\infty(\Omega)\cap[\nonlin(\solA^D)+{V}]$.
	Then, by \Cref{rem:chain.rule.checking.main.condition} we can apply \Cref{cor:chain_rule.sub(super)solution} to obtain an absolutely continuous representative $\theta_\delta:[0,T]\to\mathbb{R}$ of $t\mapsto \int_{\Omega}\Psi^{\star}_\delta(\solA(t);\bar{\solA})$ such that and $\theta_{\delta}(0)\leq \int_{\Omega}\Psi^\star(\solA_0;\bar{\solA})$.
	The uniform convergence in \eqref{eq:solution.is.continuous.mapping.properties.of.convex.function} and using that $\nonlin(\bar{\solA})\in L^\infty(\Omega)$ imply that 
	\[
	\theta_{\delta}(t)=\int_{\Omega}\Psi^{\star}_\delta(\solA(t);\bar{\solA}) \to \int_{\Omega}(\solA(t)-\bar{\solA})_+\qquad \text{as}\ \delta\to 0,
	\]
	for almost every $t\in(0,T)$, uniformly with respect to $t$.
	Similarly, $\int_{\Omega}\Psi^\star_\delta(\solA_0;\bar{\solA})\to \int_{\Omega}(\solA_0-\bar{\solA})_+$ as $\delta\to0$.
	
	Finally, let $\{\bar u_k\}_{k=1}^\infty$ be a sequence such that 
	$\bar{\solA}_k:\Omega\to{I}$, $\bar{\solA}_k\in L^1(\Omega)$, $\nonlin(\bar{\solA}_k)\in L^\infty(\Omega)\cap[\nonlin(\solA^D)+{V}]$ and
	$\bar{\solA}_k\to \solA_0$ in $L^1(\Omega)$ as $k\to\infty$.
	Consider a sequence $\delta_k\to 0$ such that 
	\[
	\abs{\int_{\Omega}\Psi^{\star}_{\delta_k}(\solA(t);\bar{\solA}_k)- (\solA(t)-\bar{\solA}_k)_+}\leq \frac{1}{k}
	\]
	for almost every $t\in(0,T)$ and let $\theta_k$ be the absolutely continuous representative of $t\mapsto\int_{\Omega}\Psi^{\star}_{\delta_k}(\solA(t);\bar{\solA}_k)$.
	Moreover, observe that
	\begin{equation}\label{eq:solution.is.continuous.mapping.triangle_ineq-like.estimate}
	\abs{(a+b)_+-a_+}\leq \abs{b}\quad\text{for all}\ a,b\in\IR.
	\end{equation}
	Indeed, if $(a+b)_+\geq a_+$ then \eqref{eq:solution.is.continuous.mapping.triangle_ineq-like.estimate} follows from $(a+b)_+-a_+\leq b_+\leq |b|$.
	Otherwise, if $(a+b)_+< a_+$ (and therefore, $b<0<a$), then we distinguish two cases. 
	If $a\geq\abs{b}$, we have $a_+-(a+b)_+=-b=\abs{b}$ and if $\abs{b}> a$ we have $a_+-(a+b)_+=a<\abs{b}$.
	
	Therefore, we obtain
	\begin{align*}
	&\abs{\theta_{k}(t)-\int_{\Omega}(\solA(t)-\solA_0)_+}
	\leq \frac{1}{k}+\abs{\int_{\Omega}(\solA(t)-\bar{\solA}_k)_+-(\solA(t)-\solA_0)_+}\\
	&\leq \frac{1}{k}+\int_{\Omega}\abs{(\solA(t)-\solA_0+\solA_0-\bar{\solA}_k)_+-(\solA(t)-\solA_0)_+}\stackrel{\eqref{eq:solution.is.continuous.mapping.triangle_ineq-like.estimate}}{\leq} \frac{1}{k}+\int_{\Omega}\abs{\solA_0-\bar{\solA}_k}\to 0,
	\end{align*}
	as $k\to\infty$, uniformly with respect to $t$, for almost every $t\in(0,T)$.
	Moreover, recalling \eqref{eq:chain.rule.in.time.estimate.sub-supersol} we have that $\theta_k(0)\leq \int_{\Omega}\Psi^\star_{\delta_k}(\solA_0;\bar{\solA}_k)\leq\int_{\Omega}(\solA_0-\bar{\solA}_k)_+\to 0$ as $k\to \infty$.
	Let $\varepsilon>0$ be arbitrary and pick $k$ large enough such that $\abs{\theta _k(t)-\int_{\Omega}(\solA(t)-\solA_0)_+}<\varepsilon$ for almost every $t$ and $\theta_k(0) < \varepsilon$, then
	\[
	\lim_{t\to 0}\esssup_{(0,t)}\int_\Omega(\solA-\solA_0)_+< \varepsilon +\limsup_{t\to 0}\theta_k(t)=\varepsilon+\theta_k(0)\leq 2\varepsilon.
	\]
	Now \eqref{eq:sub.super.solution.initialdata} follows.
\end{proof}

\begin{corollary}\label{cor:sub.and.super.solution.initial.time.estimate}
	Assume that \eqref{itm:nonlin.basic.assumption} and \eqref{itm:Dirichlet.boundary.condtion} hold and $\solA_0$ and $\solAvar_0$ both satisfy \eqref{itm:initial.condtion}.
	Suppose that $\solA$ and $\solAvar$ satisfy \textit{(i)} and \textit{(ii)} in \Cref{def:SD-PME.sub-super.solution.including.time-derivative} in the sense of a subsolution and supersolution with respect to $\solA_0$ and $\solAvar_0$, respectively.
	Then,
	\begin{equation}
	\lim_{t\to 0}\esssup_{(0,t)}\int_{\Omega}(\solA-\solAvar)_+\leq \int_{\Omega}(\solA_0-\solAvar_0)_+.
	\end{equation}
\end{corollary}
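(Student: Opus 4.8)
The plan is to deduce the estimate directly from \Cref{prop:sub.super.solution.initialdata} by splitting the difference $\solA-\solAvar$ through the two initial data and then invoking the subsolution statement for $\solA$ and the supersolution statement for $\solAvar$ separately. The only elementary tool needed is the subadditivity of the positive part, namely $(a+b+c)_+\leq a_++b_++c_+$ for all $a,b,c\in\IR$, which follows from $x\leq x_+$ applied to each summand together with the monotonicity of $x\mapsto x_+$. Note that the hypotheses of \Cref{prop:sub.super.solution.initialdata} are met for each function: \eqref{itm:nonlin.basic.assumption} and \eqref{itm:Dirichlet.boundary.condtion} are assumed, and $\solA_0$, $\solAvar_0$ both satisfy \eqref{itm:initial.condtion}.

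First I would write, pointwise a.e.\ in $\Omega_T$,
\[
\solA-\solAvar=(\solA-\solA_0)+(\solA_0-\solAvar_0)+(\solAvar_0-\solAvar),
\]
and apply the subadditivity of the positive part together with the identity $(\solAvar_0-\solAvar)_+=(\solAvar-\solAvar_0)_-$ to obtain
\[
(\solA-\solAvar)_+\leq(\solA-\solA_0)_++(\solA_0-\solAvar_0)_++(\solAvar-\solAvar_0)_-.
\]
Integrating over $\Omega$, taking the essential supremum over $(0,t)$, and then letting $t\to 0$, the middle term $\int_{\Omega}(\solA_0-\solAvar_0)_+$ is constant in $t$, while the outer two terms vanish in the limit: the first by the subsolution statement of \Cref{prop:sub.super.solution.initialdata} applied to $\solA$ with initial data $\solA_0$, and the third by the supersolution statement applied to $\solAvar$ with initial data $\solAvar_0$. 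This yields exactly the claimed inequality.

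There is no substantial obstacle here; the corollary is essentially a repackaging of \Cref{prop:sub.super.solution.initialdata}. The only point requiring a little care is that the essential supremum and the limit interact correctly with the three-term splitting: since $\esssup_{(0,t)}(F+G+H)\leq\esssup_{(0,t)}F+\esssup_{(0,t)}G+\esssup_{(0,t)}H$ and each of the two outer terms tends to $0$ as $t\to0$, the bound passes to the limit without difficulty, the constant middle term $\int_{\Omega}(\solA_0-\solAvar_0)_+$ surviving unchanged.
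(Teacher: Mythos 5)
Your proposal is correct and follows essentially the same route as the paper: the paper applies the inequality $(a-b)_+\leq(a-c)_++(c-b)_+$ twice, telescoping through $\solA_0$ and $\solAvar_0$, which is precisely your three-term splitting combined with subadditivity of the positive part, and then invokes \Cref{prop:sub.super.solution.initialdata} for the two outer terms exactly as you do. Your explicit identification $(\solAvar_0-\solAvar)_+=(\solAvar-\solAvar_0)_-$, needed to match the supersolution statement, is a detail the paper leaves implicit, so nothing is missing.
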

\begin{proof}
	First, observe that the following inequality holds
	\[
	(a-b)_+\leq(a-c)_++(c-b)_+\quad\text{for all}\ a,b,c\in\IR.
	\]
	Applying this inequality twice yields
	\[
	\lim_{t\to 0}\esssup_{(0,t)}\int_{\Omega}(\solA-\solAvar)_+\leq \lim_{t\to 0}\esssup_{(0,t)}\int_{\Omega}(\solA-\solA_0)_++\int_{\Omega}(\solA_0-\solAvar_0)_++\lim_{t\to 0}\esssup_{(0,t)}\int_{\Omega}(\solA_0-\solAvar)_+.
	\]
	By \eqref{eq:sub.super.solution.initialdata}, the first and last term of the right-hand side vanish, which proves the statement.
\end{proof}

In the literature on porous medium equations, solutions $\solA\in C([0,T];L^1(\Omega))$ or $\solA\in C([0,T];L^2(\Omega))$ are often considered, e.g. see \cite{Aronson1982} and \cite{DiBenedetto2012}.
This motivates the following class of solutions. We show how it is related to the solution concept in Definition \ref{def:SD-PME.solution.including.time-derivative}.

\begin{definition}\label{def:SD-PME.solution}
	A \emph{solution} of \eqref{eq:SD-PME.Problem} is a measurable function $\solA:\Omega_T\rightarrow I$ such that
	\begin{enumerate}[label=(\roman*),font=\itshape]
		\item $\solA\in C([0,T];L^1(\Omega))$, $\nonlin(\solA)\in \nonlin(\solA^D)+ \mcV$ and
		\item the identity
		\begin{equation}\label{eq:SD-PME.solution.id}
		\displayindent0pt
		\displaywidth\textwidth
		\begin{aligned}
		\int_0^\tau\left[-\int_{\Omega}{\solA}{\testA_t}+\inprod{\nabla\nonlin(\solA)}{\nabla\testA}\right]+\int_{\Omega}{\solA(\tau)}{\eta(\tau)}=\int_{\Omega}{\solA_0}{\testA(0)}+\int_0^\tau\inprod{\source(\emptyarg,\solA)}{\eta}
		\end{aligned}
		\end{equation}
		holds for all $0\leq\tau\leq T$ and $\eta\in \mcV\cap W^{1,1}(0,T;L^\infty(\Omega))$.
	\end{enumerate}
\end{definition}

\begin{proposition}\label{prop:solution.definitions.agree}
	Suppose that \eqref{itm:nonlin.basic.assumption}, \eqref{itm:source.Lipschitz.assumption}, \eqref{itm:Dirichlet.boundary.condtion} and \eqref{itm:initial.condtion} hold.
	Further, let $\solA:\Omega_T\to{I}$ be a measurable function such that $\solA\in C([0,T];L^1(\Omega))$ and suppose that $\source(\emptyarg,\solA)\in L^2(0,T;{V}^*)$.
	Then, $\solA$ is a solution of \eqref{eq:SD-PME.Problem} in the sense of \Cref{def:SD-PME.solution.including.time-derivative} if and only if $\solA$ is a solution of \eqref{eq:SD-PME.Problem} in the sense of \Cref{def:SD-PME.solution}.
\end{proposition}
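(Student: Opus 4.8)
The plan is to verify that the two definitions describe the same object by translating between the distributional time-derivative formulation of \Cref{def:SD-PME.solution.including.time-derivative} and the integrated formulation with explicit boundary values of \Cref{def:SD-PME.solution}. Note first that both definitions impose $\nonlin(\solA)\in\nonlin(\solA^D)+\mcV$, so in either case $\nabla\nonlin(\solA)\in L^2(\Omega_T)$ by \eqref{itm:Dirichlet.boundary.condtion}, while the hypothesis $\solA\in C([0,T];L^1(\Omega))$ gives $\solA\in L^\infty(0,T;L^1(\Omega))$ and \eqref{itm:initial.condtion} gives $\solA_0\in L^1(\Omega)$. The bridge between the two formulations is the product rule for $\solA\in\mcW\subseteq W^{1,2}(0,T;{Y}^*)$ recorded in the proof of \Cref{lem:initial.cond.alternative}, which upon integration yields
\begin{equation*}
	\pinprod{\solA(\tau)}{\testA(\tau)}-\pinprod{\solA(0)}{\testA(0)}=\int_0^\tau\left(\pinprod{\solA_t}{\testA}+\pinprod{\solA}{\testA_t}\right)
\end{equation*}
for $\testA\in W^{1,2}([0,T];{Y})$, together with \Cref{lem:initial.cond.alternative}, which identifies $\solA(0)\in{Y}^*$ with $\solA_0$ via $\pinprod{\solA(0)}{\testA}=\int_\Omega\solA_0\testA$.

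First I would assume that $\solA$ solves \eqref{eq:SD-PME.Problem} in the sense of \Cref{def:SD-PME.solution.including.time-derivative} and derive \eqref{eq:SD-PME.solution.id}. Since \eqref{eq:SD-PME.solution.including.time-derivative.id} holds for every element of $\mcV=L^2(0,T;{V})$, testing it with $\testA\chi_{(0,\tau)}$, where $\testA\in\mcV\cap W^{1,1}(0,T;L^\infty(\Omega))$, reproduces the identity with all time integrals running over $(0,\tau)$. Rewriting the term $\int_0^\tau\pinprod{\solA_t}{\testA}$ by the integration-by-parts formula above, the pairings $\pinprod{\solA(\tau)}{\testA(\tau)}$ and $\pinprod{\solA}{\testA_t}$ become the $L^1$--$L^\infty$ integrals $\int_\Omega\solA(\tau)\testA(\tau)$ and $\int_\Omega\solA\testA_t$ by \eqref{eq:interplay.L1-V*}, while $\pinprod{\solA(0)}{\testA(0)}=\int_\Omega\solA_0\testA(0)$ by \Cref{lem:initial.cond.alternative}; after rearrangement this is exactly \eqref{eq:SD-PME.solution.id}, so \Cref{def:SD-PME.solution} holds.

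Conversely, assume $\solA$ solves \eqref{eq:SD-PME.Problem} in the sense of \Cref{def:SD-PME.solution}. Taking $\tau=T$ in \eqref{eq:SD-PME.solution.id} and restricting to test functions with $\testA(0)=\testA(T)=0$ removes the boundary terms and leaves $-\int_0^T\int_\Omega\solA\testA_t=\int_0^T(\pinprod{\source(\emptyarg,\solA)}{\testA}-\inprod{\nabla\nonlin(\solA)}{\nabla\testA})$. Since $\source(\emptyarg,\solA)\in L^2(0,T;{V}^*)$ and $\nabla\nonlin(\solA)\in L^2(\Omega_T)$, the right-hand side is a bounded functional of $\testA\in L^2(0,T;{V})$, so by the definition of $\mcW$ we conclude $\solA\in\mcW$ with $\solA_t=\Delta\nonlin(\solA)+\source(\emptyarg,\solA)$ in $L^2(0,T;{V}^*)$; read pointwise in $t$ and integrated against any $\testA\in\mcV$, this is precisely \eqref{eq:SD-PME.solution.including.time-derivative.id}, so conditions \textit{(i)} and \textit{(iii)} hold. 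For the initial condition \textit{(ii)}, I would take $\tau=T$ and test functions with only $\testA(T)=0$ in \eqref{eq:SD-PME.solution.id}, substitute the relation $\int_0^T\inprod{\nabla\nonlin(\solA)}{\nabla\testA}=\int_0^T(\pinprod{\source(\emptyarg,\solA)}{\testA}-\pinprod{\solA_t}{\testA})$ just obtained, cancel the reaction terms, and use $\int_\Omega\solA_0\testA(0)=-\int_0^T\int_\Omega\solA_0\testA_t$; this yields \eqref{eq:SD-PME.solution.including.time-derivative.init.cond.id}.

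The main obstacle is justifying the integration-by-parts formula for the comparatively weak class $\mcV\cap W^{1,1}(0,T;L^\infty(\Omega))$ used in the first direction: the product rule is available only for $\testA\in W^{1,2}([0,T];{Y})$, whereas these test functions merely satisfy $\testA\in L^2(0,T;{V})$ with $\testA_t\in L^1(0,T;L^\infty(\Omega))$. I would resolve this by mollifying $\testA$ in time, exactly as at the end of the proof of \Cref{lem:initial.cond.alternative}, to obtain approximants $\testA_n\in C^\infty([0,T];{Y})\subseteq W^{1,2}([0,T];{Y})$ for which the formula is valid, and then pass to the limit using $\testA_n\to\testA$ in $L^2(0,T;{V})$, $\partial_t\testA_n\to\testA_t$ in $L^1(0,T;L^\infty(\Omega))$, and $\testA_n(0),\testA_n(\tau)\to\testA(0),\testA(\tau)$ in $L^\infty(\Omega)$ (which holds since $W^{1,1}(0,T;L^\infty(\Omega))\hookrightarrow C([0,T];L^\infty(\Omega))$); the boundary pairings then converge because $\solA(0),\solA(\tau)\in L^1(\Omega)$ are tested against $L^\infty$-convergent sequences through \eqref{eq:interplay.L1-V*}.
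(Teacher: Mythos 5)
Your proposal is correct and follows essentially the same route as the paper's own proof: the forward direction uses the $Y^*$-valued product rule together with \Cref{lem:initial.cond.alternative} (plus time-mollification to reach the weak test class, exactly as in the paper's proof of that lemma), and the converse direction realizes the right-hand side of \eqref{eq:SD-PME.solution.id} as a bounded functional on $\mcV$, identified with an element of $L^2(0,T;V^*)$ via the duality of Bochner spaces, which is then recognized as $\solA_t$. The only differences are organizational (the paper first deduces $\solA(0)=\solA_0$ a.e.\ from \eqref{eq:initial.cond.alternative} with sign-approximating test functions, and defines its functional $\Lambda$ including the boundary terms rather than restricting to test functions vanishing at the endpoints), not mathematical.
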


\begin{proof}
	Let $\solA:\Omega\to {I}$, $\solA\in C([0,T];L^1(\Omega))$ be a solution of \eqref{eq:SD-PME.Problem} in the sense of \Cref{def:SD-PME.solution.including.time-derivative}.
	Consider \Cref{lem:initial.cond.alternative} and $\testA_k\in C^\infty_c(\Omega)$ converging to $\mathrm{sign}(\solA(0)-\solA_0)$ in $L^1(\Omega)$ in the identity \eqref{eq:initial.cond.alternative}.
	We conclude that $\norm{\solA(0)-\solA_0}_{L^1(\Omega)}=0$, that is, $\solA(0)=\solA_0$ a.e.\ in $\Omega$.
	Next, let $\tau\in[0,T]$ and $\eta\in \mcV\cap W^{1,1}(0,T;L^\infty(\Omega))$.
	As in the proof of \Cref{lem:initial.cond.alternative} we have that
	\[
	\int_0^\tau\pinprod{\solA_t}{\eta}=\int_\Omega{\solA(\tau)}{\eta(\tau)}-\int_\Omega{\solA_0}{\eta(0)}-\int_0^\tau\int_{\Omega}{\solA}{\eta_t}.
	\]
	Substituting this into \eqref{eq:SD-PME.solution.including.time-derivative.id} shows that \eqref{eq:SD-PME.solution.id} holds.
	
	Conversely, let $\solA$ be a solution in the sense of \Cref{def:SD-PME.solution}. 
	We define the functional $\Lambda: \mcV\cap W^{1,1}(0,T;L^\infty(\Omega))\to\IR$ by
	\begin{align}\label{eq:solution.definitions.agree.RHS.Lambda}
	\Lambda(\eta)	\stackrel{\phantom{\eqref{eq:SD-PME.solution.id}}}{=}		\int_\Omega{\solA(T)}{\eta(T)}-\int_\Omega{\solA_0}{\eta(0)}-\iint_{\Omega_T} {\solA}{\eta_t}			
	\stackrel{\eqref{eq:SD-PME.solution.id}}{=}				\int_0^T\left(-\inprod{\nabla\phi(\solA)}{\nabla\eta}+\inprod{\source(\solA)}{\eta}\right).	
	\end{align}
	If $\testA$ satisfies $\norm{\eta}_{\mcV}\leq 1$, then the Cauchy-Schwarz inequality implies that 
	\[
	\abs{\Lambda(\eta)}^2\leq\norm{\nabla\nonlin(\solA)}_{L^2(\Omega_T)}^2+\norm{\source(\emptyarg,\solA)}_{L^2(0,T;{V}^*)}^2.
	\]
	This bound is independent of $\testA$, and thus $\Lambda$ is a bounded linear functional with respect to the norm of $\mcV$.
	Since the functions in $C^\infty(\overline{\Omega}\times[0,T])$ that vanish on $\Gamma$ are dense in $\mcV$, there exists a unique extension $\Lambda$ to the whole space, i.e.\ $\Lambda \in \mcV^*$.
	By \Cref{lem:duality.Bochner_spaces} this dual space is identified with $L^2(0,T;{V}^*)$ and we conclude that there exists a unique $\solB\in L^2(0,T;{V}^*)$ such that $\int_0^T\pinprod{\solB}{\eta}=\Lambda(\eta)$ for all $\eta\in \mcV$.
	In particular, it follows that 
	\begin{equation}\label{eq:solution.definitions.agree.time-derivative}
	\int_0^\tau\pinprod{\solB}{\testA}=\int_{\Omega}\solA(\tau)\testA(\tau)-\int_{\Omega}\solA_0\testA(0)-\iint_{\Omega_T}\solA\testA_t
	\end{equation}
	for all $\tau\in[0,T]$ and $\eta \in \mcV \cap W^{1,1}(0,T;L^\infty(\Omega))$. Hence, $\solB=\solA_t$ in the sense of \eqref{eq:solution.space.time.derivative} and identity \eqref{eq:SD-PME.solution.including.time-derivative.init.cond.id} is satisfied.
	Finally, \eqref{eq:SD-PME.solution.including.time-derivative.id} follows from the fact that the right-hand side of \eqref{eq:solution.definitions.agree.RHS.Lambda} is equal to $\Lambda(\testA)=\int_0^T\pinprod{\solA_t}{\testA}$, which now holds for any $\eta\in \mcV$.
\end{proof}

\section{Well-posedness of the scalar equation}\label{sect:L1-contraction}

In this section, we prove the $L^1$-contraction result and the well-posedness for problem \eqref{eq:SD-PME.Problem}, i.e. 
Theorems \ref{thm:L1-contraction} and \ref{thm:Well-posedness}. 
The proof of Theorem \ref{thm:L1-contraction} is based on the following lemma. Here, we assume that $u$ is a subsolution and $\tilde u$ a 
supersolution of \eqref{eq:SD-PME.Problem}. 
Doubling the time-variable,
\[
(x,t_1,t_2)\in \Omega\times(0,T)^2=: Q_T,
\]
we extend $\solA$ and $\solAvar$ to $Q_T$ by $\solA(x,t_1,t_2)=\solA(x,t_1)$ and $\solAvar(x,t_1,t_2)=\solAvar(x,t_2)$.
Furthermore, to shorten notations we introduce 
\[
\solC=\nonlin(\solA),\quad \solCvar=\nonlin(\solAvar),\quad \solC^D=\nonlin(\solA^D),\quad F=\source(\emptyarg,\solA),\quad \tilde{F}=\source(\emptyarg,\tilde \solA).
\]

\begin{lemma}\label{lem_L1-contraction.double.time.estimate}
	Let $u$ be a subsolution and $\tilde u$ be a supersolution of \eqref{eq:SD-PME.Problem}. Then, the inequality	
	\begin{equation}\label{eq:L1-contraction.double.time.estimate}
	\iiint_{Q_T}\left(-(\solA-\solAvar)_+(\testC_{t_1}+\testC_{t_2})-\mathrm{sign}_+(\solA-\solAvar)(F-\tilde{F})\testC\right)\leq 0
	\end{equation}
	holds for all non-negative $\testC\in C^\infty_c((0,T)^2)$.
\end{lemma}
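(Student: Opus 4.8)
The plan is to carry out the doubling-of-variables argument of \cite{Otto95}, testing the subsolution inequality in the variable $t_1$ and the supersolution inequality in the variable $t_2$ against a common, regularised sign of $\solA-\solAvar$, and then removing the regularisation. I fix smooth, non-decreasing functions $\psi_n:\IR\to\IR$ with $0\le\psi_n\le 1$, $\psi_n\equiv 0$ on $(-\infty,0]$ and $\psi_n(\zeta)\to\mathrm{sign}_+(\zeta)$ for every $\zeta\in\IR$, and let $\Psi_n$ be the convex primitive with $\Psi_n'=\psi_n$. For a.e.\ fixed $t_2$ I use $\eta(x,t_1)=\psi_n(\solC(x,t_1)-\solCvar(x,t_2))\,\testC(t_1,t_2)$ as a test function in \Cref{def:SD-PME.sub-super.solution.including.time-derivative} \textit{(iii)} for the subsolution $\solA$ (read in $t_1$). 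This is admissible: $\psi_n$ has bounded derivative, so $\psi_n(\solC-\solCvar(\emptyarg,t_2))\in H^1(\Omega)$; it is non-negative; and by \Cref{lem:trace.interchanged.function} its trace on $\Gamma$ equals $\psi_n(\Tr\solC-\Tr\solCvar)$, which vanishes because $\Tr\solC\le\nonlin(\solA^D)\le\Tr\solCvar$ on $\Gamma$ while $\psi_n\equiv 0$ on $(-\infty,0]$. Hence $\eta(\emptyarg,t_1)\in {V}$ and $\eta\in\mcV$. Symmetrically, for a.e.\ fixed $t_1$ the same expression, now read in $t_2$, is admissible for the supersolution $\solAvar$. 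I then integrate the subsolution inequality over $t_2$, the supersolution inequality over $t_1$, and subtract the latter from the former.

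The diffusion terms combine favourably: since $\testC$ is independent of $x$, we have $\nabla_x\eta=\psi_n'(\solC-\solCvar)\,\nabla(\solC-\solCvar)\,\testC$, so after subtraction the diffusion contribution equals $\iiint_{Q_T}\psi_n'(\solC-\solCvar)\,\abs{\nabla(\solC-\solCvar)}^2\,\testC\ge 0$, which I discard, preserving the inequality. For the time-derivative terms I apply \Cref{prop:chain.rule.in.time} \textit{(ii)}, whose hypotheses hold for a.e.\ value of the frozen variable: $\psi_n$ is bounded, so case \textit{(ii)} applies, and the structural condition \eqref{eq:chain.rule.in.time.main.condition} is precisely the membership $\psi_n(\solC-\solCvar)\in\mcV$ established above (cf.\ \Cref{rem:chain.rule.checking.main.condition}). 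For $\solA$ in $t_1$, with reference $\solAvar(\emptyarg,t_2)$ and the convex $\Psi_n$, the map $t_1\mapsto\int_\Omega\Psi_n^\star(\solA(t_1);\solAvar(t_2))$ is absolutely continuous with derivative $\pinprod{\solA_{t_1}}{\psi_n(\solC-\solCvar)}$; multiplying by $\testC$ and integrating by parts in $t_1$ (no boundary terms, as $\testC(\emptyarg,t_2)$ is compactly supported in $(0,T)$) gives $-\iiint_{Q_T}\Psi_n^\star(\solA;\solAvar)\,\testC_{t_1}$. For $\solAvar$ in $t_2$ I use the \emph{concave} primitive $\check{\Psi}$ of $\zeta\mapsto\psi_n(-\zeta)$ with reference $\solA(\emptyarg,t_1)$, which yields the contribution $-\iiint_{Q_T}\check{\Psi}^\star(\solAvar;\solA)\,\testC_{t_2}$, where $\check{\Psi}^\star(\solAvar;\solA)=\int_{\solA}^{\solAvar}\psi_n(\nonlin(\solA)-\nonlin(s))\,\md s$.

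It remains to let $n\to\infty$. From \eqref{eq:transformed.function.positive.estimates} and $0\le\psi_n\le 1$ one obtains $0\le\Psi_n^\star(\solA;\solAvar)\le(\solA-\solAvar)_+$ and, analogously, $-(\solA-\solAvar)_+\le\check{\Psi}^\star(\solAvar;\solA)\le 0$, while both $\Psi_n^\star(\solA;\solAvar)$ and $-\check{\Psi}^\star(\solAvar;\solA)$ converge pointwise to $(\solA-\solAvar)_+$ as $\psi_n\to\mathrm{sign}_+$. Since $(\solA-\solAvar)_+\in L^1(Q_T)$ because $\solA,\solAvar\in L^\infty(0,T;L^1(\Omega))$, dominated convergence shows that the two subtracted time contributions converge to $-\iiint_{Q_T}(\solA-\solAvar)_+(\testC_{t_1}+\testC_{t_2})$. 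On the right-hand side $\psi_n(\solC-\solCvar)\to\mathrm{sign}_+(\solC-\solCvar)=\mathrm{sign}_+(\solA-\solAvar)$ pointwise and is dominated by $\abs{F-\tilde{F}}\in L^1(Q_T)$, so that term tends to $\iiint_{Q_T}\mathrm{sign}_+(\solA-\solAvar)(F-\tilde{F})\testC$. Collecting the limits gives exactly \eqref{eq:L1-contraction.double.time.estimate}.

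The step I expect to be most delicate is the rigorous use of the chain rule with the \emph{$x$-dependent} reference frozen at the other time slice: one must verify its hypotheses for almost every value of the frozen variable and secure enough joint measurability in $(t_1,t_2)$ to integrate the resulting identities and apply Fubini. The sign bookkeeping for the supersolution — where the relevant primitive is concave, yet its transform still tends to $(\solA-\solAvar)_+$ — is the other point demanding care.
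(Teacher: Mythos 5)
Your proposal is correct and follows essentially the same route as the paper's proof: doubling the time variable, testing the sub- and supersolution inequalities with regularised sign functions of $\nonlin(\solA)-\nonlin(\solAvar)$ (frozen in the other time variable), rewriting the time terms via \Cref{prop:chain.rule.in.time}, discarding the non-negative diffusion term by convexity, and passing to the limit by dominated convergence. The only differences are bookkeeping ones — you fold $\testC$ into the test function and use a non-negative test function for the supersolution followed by subtraction, where the paper uses the non-positive $\psi_\delta^-$ and adds — which are equivalent via $\psi_\delta^+(\zeta)=-\psi_\delta^-(-\zeta)$.
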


\begin{proof}
	Let $\Psi$ be a  smooth, convex function having the properties \eqref{eq:solution.is.continuous.mapping.convex.function.defining.properties} and set
	\[
	\Psi_\delta^\pm(\zeta)=\delta\Psi\left(\pm \frac{\zeta}{\delta}\right),\quad\psi_\delta^\pm(\zeta)=\left[\Psi_\delta^\pm\right]'(\zeta)=\pm\Psi'\left(\pm\frac{\zeta}{\delta}\right).
	\]
	For fixed $t_2$, we apply \eqref{eq:SD-PME.solution.including.time-derivative.id} for the subsolution $\solA$ and the test function
	\[
	\testA(x,t_1)=\psi_\delta^+\left(\solC(x,t_1)-\tilde \solC(x,t_2)\right),
	\]
	which is justified by \Cref{rem:chain.rule.checking.main.condition}. Moreover, we can rewrite the term involving the time derivative using \eqref{eq:chain.rule.in.time} with $\bar{\solA}(x)=\solAvar(x,t_2)$,
	which implies that
	\begin{equation*}
	\frac{\md}{\md t_1}\int_{\Omega}\Psi_\delta^{+,\star}(\solA;\solAvar(t_2))+\inprod{\nabla\solC}{\nabla\psi_\delta^+\left(\solC-\solCvar(t_2)\right)}\leq\inprod{F}{\psi_\delta^+\left(\solC-\solCvar(t_2)\right)}
	\end{equation*}
	a.e.\ in $(0,T)$.
	Similarly, for fixed $t_1$ we apply \eqref{eq:SD-PME.solution.including.time-derivative.id} for the supersolution $\solAvar$ and the non-positive test function
	\[
	\testA(x,t_2)=\psi_\delta^{-}\left(\tilde\solC(x,t_2)-\solC(x,t_1)\right).
	\]
	Moreover, using \eqref{eq:chain.rule.in.time} with $\bar{\solA}(x)=u(x,t_1)$, it follows that 
	\begin{equation*}
	\frac{\md}{\md t_2}\int_{\Omega}\Psi_\delta^{-,\star}(\solAvar;\solA(t_1))+\inprod{\nabla\solCvar}{\nabla\psi_\delta^-\left(\solCvar-\solC(t_1)\right)}\leq\inprod{\tilde{F}}{\psi_\delta^-\left(\solCvar-\solC(t_1)\right)}
	\end{equation*}
	a.e.\ in $(0,T)$.
	Now, we add both inequalities, multiply the resulting inequality by a non-negative function $\testC \in C_c^\infty((0,T)^2)$ and integrate over $(t_1,t_2)\in (0,T)^2$.
	Integration by parts with respect to $t_1$ and $t_2$ then yields
	\begin{align*}
	\iiint_{Q_T}\Bigl(	&-(\Psi_\delta^{+,\star}(\solA;\solAvar)\testC_{t_1}+\Psi_\delta^{-,\star}(\solAvar;\solA)\testC_{t_2})\\
	&\quad+\left(\nabla\left(\solC-\solCvar\right)\cdot\nabla\psi_\delta^+\left(\solC-\solCvar\right)-\inprod{({F}-\tilde{F})}{\psi_\delta^+\left(\solC-\solCvar\right)}\right)\testC\Bigr)\leq 0,
	\end{align*}
	where used that $\psi_\delta^+(\zeta)=-\psi_\delta^-(-\zeta)$.
	Next, we note that
	\[
	\nabla\left(\solC-\solCvar\right)\cdot\nabla\left(\psi_\delta^+\left(\solC-\solCvar\right)\right)=\abs{\nabla\left(\solC-\solCvar\right)}^2(\psi_\delta^+)'\left(\solC-\solCvar\right)\geq 0,
	\]
	by the convexity of $\Psi^+_\delta$. Hence, we obtain
	\begin{equation}\label{eq:L1-contraction.double.time.estimate.regularized}
	\iiint_{Q_T}\left(-(\Psi_\delta^{+,\star}(\solA;\solAvar)\testC_{t_1}+\Psi_\delta^{-,\star}(\solAvar;\solA)\testC_{t_2})-\inprod{(F-\tilde{F})}{\psi_\delta^+\left(\solC-\solCvar\right)}\testC\right)\leq 0.
	\end{equation}
	The convergence in  \eqref{eq:solution.is.continuous.mapping.properties.of.convex.function}  and observing that
	\[
	\abs{\psi^+_\delta}\leq \mathrm{sign}_+,\quad \psi^+_\delta\to\mathrm{sign}_+\quad\text{point-wise as $\delta\to 0$} 
	\]
	justify that we can pass to the limit $\delta\to 0$ in \eqref{eq:L1-contraction.double.time.estimate.regularized} by dominated convergence.
	This implies \eqref{eq:L1-contraction.double.time.estimate} using that $\mathrm{sign}_+(\phi(\solA)- \phi (\tilde{\solA}))= \mathrm{sign}_+(u-\tilde u)$, which proves the lemma.
\end{proof}

We now use this lemma to prove Theorem  \ref{thm:L1-contraction} .

\begin{proof}[Proof of \Cref{thm:L1-contraction}]
	Suppose that $\solA$ is a subsolution and $\solAvar$ is a supersolution of \eqref{eq:SD-PME.Problem}. We extend $u$ and $\tilde u$ to $Q_T$ as above.
	Let $\testA\in C^\infty_c(\IR)$ denote the standard mollifier with unit mass and support in $(-1,1)$. For $\varepsilon>0$ 
	and a non-negative function $\testC\in C^\infty_c((0,T))$
	we set
	\[
	\testC_\varepsilon(t_1,t_2):=\frac{1}{\varepsilon}\testA\left(\frac{t_1-t_2}{\varepsilon}\right)\testC\left(\frac{t_1+t_2}{2}\right).
	\]
	We note that  $\testC_\varepsilon$ is an admissible test function in \eqref{eq:L1-contraction.double.time.estimate} if $\varepsilon>0$ is small enough.
	Observing that
	\[
	(\partial_{t_1}+\partial_{t_2})\testC_\varepsilon(t_1,t_2)=\frac{1}{\varepsilon}\testA\left(\frac{t_1-t_2}{\varepsilon}\right)\testC_t\left(\frac{t_1+t_2}{2}\right)
	\]
	and using the change of variables $(t,\tau)=(t_1,t_1-t_2)$, 
	implies the estimate
	\begin{equation}\label{eq:L1-contraction.estimate.regularized.+}
	\int_{\IR}\frac{1}{\varepsilon}\testA\left(\frac{\tau}{\varepsilon}\right)\iint_{\Omega_T}\left(-(\solA-\solAvar^\tau)_+\testC_t^{\tau/2}-\mathrm{sign}_+(\solA-\solAvar^\tau)(F-\tilde{F}^\tau)\testC^{\tau/2}\right)\md \tau\leq 0,
	\end{equation}
	where we write $\solA^\tau(t):=\solA(t-\tau)$.
	To pass to the limit $\varepsilon\to 0$, we study each term separately as $\tau\to 0$.
	Observe that
	\[
	\abs{(\solA-\solAvar^\tau)_+-(\solA-\solAvar)_+}=\abs{(\solA-\solAvar+\solAvar-\solAvar^\tau)_+-(\solA-\solAvar)_+}\stackrel{\eqref{eq:solution.is.continuous.mapping.triangle_ineq-like.estimate}}{\leq}\abs{\solAvar-\solAvar^\tau},
	\]
	so \Cref{lem:time-shift.convergence} and the uniform convergence of $\testC_t^{\tau/2}\to\testC_t$ show that the first term of \eqref{eq:L1-contraction.estimate.regularized.+} converges as $\tau \to 0$.
	By \eqref{itm:source.Lipschitz.assumption}, 
	it follows that
	\begin{equation}\label{eq:L1-contraction.source.term.estimates}
	\begin{aligned}
	&-\mathrm{sign}_+(\solA-\solAvar^\tau)(F-\tilde{F}^\tau)	\\
	=& -\left(\mathrm{sign}_+(\solA-\solAvar^\tau)\right)\left(\source(\emptyarg,\solA)-\sourcevar(\emptyarg,\solA)^\tau\right)
	-\left(\mathrm{sign}_+(\solA-\solAvar^\tau)\right)\left(\sourcevar(\emptyarg,\solA)^\tau-\sourcevar(\emptyarg,\solAvar)^\tau\right)\\
	\geq& -\abs{\source(\emptyarg,\solA)-\sourcevar(\emptyarg,\solA)^\tau}-\LipBoundSource(\solA^\tau-\solAvar^\tau)_+.
	\end{aligned}
	\end{equation}
	By \Cref{lem:time-shift.convergence}, we have that $\tilde{\source}(\emptyarg,\solA)^\tau\to \tilde{\source}(\emptyarg,\solA)$ in $L^2(\Omega_T)$ and $(\solA^\tau-\solAvar^\tau)_+=(\solA-\solAvar)_+^\tau\to(\solA-\solAvar)_+$ in $L^1(\Omega_T)$ as $\tau \to 0$. Therefore, the uniform convergence of $\varphi^ {\tau / 2}\to \varphi$ implies that
	\begin{equation*}
	\lim_{\tau\to 0}\iint_{\Omega_{T}}-\mathrm{sign}_+(\solA-\solAvar^\tau)(F-\tilde{F}^\tau)\testC^{\tau/2}\geq -\iint_{\Omega_{T}}\left(\LipBoundSource(\solA-\solAvar)_++\abs{\source(\emptyarg,\solA)-\sourcevar(\emptyarg,\solA)}\right)\varphi.
	\end{equation*}
	We can now pass to the limit $\varepsilon\to 0$ in \eqref{eq:L1-contraction.estimate.regularized.+} to obtain the estimate
	\begin{equation}\label{eq:L1-contraction.estimate.+}
	\iint_{\Omega_T}\left(-(\solA-\solAvar)_+\testC_t-\left(\LipBoundSource(\solA-\solAvar)_++\abs{\source(\emptyarg,\solA)-\sourcevar(\emptyarg,\solA)}\right)\varphi\right)\leq 0
	\end{equation}
	for any non-negative $\testC\in C^\infty_c((0,T))$.
	
	Fixing $s\in(0,T)$ and applying \eqref{eq:L1-contraction.estimate.+} to $\testC(t)=\testA_\varepsilon(s-t)$, where $\varepsilon>0$ is small enough, yields
	\begin{equation*}
	\int_{\Omega} \partial_t\left((\solA-\solAvar)_+\right)^\varepsilon(s)\leq\int_{\Omega} \left(\LipBoundSource(\solA-\solAvar)_++\abs{\source(\emptyarg,\solA)-\sourcevar(\emptyarg,\solA)}\right)^\varepsilon(s),
	\end{equation*}
	where $\solA^\varepsilon:=\eta_\varepsilon*\solA$ denotes the convolution of $\solA$ with $\eta_\varepsilon(t):=\frac{1}{\varepsilon}\eta(\varepsilon t)$.
	Integrating the estimate over $s\in(\varepsilon,t)$, for some $t\in(\varepsilon,T-\varepsilon)$, passing to the limit $\varepsilon\to 0$ 
	we obtain
	\begin{equation*}
	\int_{\Omega}(\solA(t)-\solAvar(t))_+\leq\int_{\Omega}(\solA_0-\solAvar_0)_++\int_0^t\int_{\Omega} \left(\LipBoundSource(\solA-\solAvar)_++\abs{\source(\emptyarg,\solA)-\sourcevar(\emptyarg,\solA)}\right)
	\end{equation*}
	for almost every $t\in(0,T)$. 
	Here, we used \Cref{cor:sub.and.super.solution.initial.time.estimate} to estimate
	\begin{align*}
	\lim_{\varepsilon\to 0}\int_{\Omega}\left((\solA-\solAvar)_+\right)^\varepsilon(\varepsilon)&\leq\lim_{\varepsilon\to 0}\sup_{\varepsilon\leq s\leq 2\varepsilon}\int_{\Omega}\left((\solA-\solAvar)_+\right)^\varepsilon(s)\\
	&=\lim_{\varepsilon\to 0}\sup_{\varepsilon\leq s\leq 2\varepsilon}\int_{-\varepsilon}^{\varepsilon}\int_{\Omega}\eta_\varepsilon(\tilde{s})[(\solA-\solAvar)_+](s-\tilde{s})\md \tilde{s}\\
	&\leq \lim_{\varepsilon\to 0}\esssup_{0<s<3\varepsilon}\int_{\Omega}(\solA-\solAvar)_+(s)\leq \int_{\Omega}(\solA_0-\solAvar_0)_+.
	\end{align*}
	
	Finally, applying Gronwall's Lemma implies \eqref{eq:L1-contraction.sub-supersolution} for $\solB=\solA$.
	
	To obtain \eqref{eq:L1-contraction.sub-supersolution} for $\solB=\solAvar$, we replace \eqref{eq:L1-contraction.source.term.estimates} by
	\begin{align*}
	&-\mathrm{sign}_+(\solA-\solAvar^\tau)(F-\tilde{F}^\tau)	\\
	=& -\left(\mathrm{sign}_+(\solA-\solAvar^\tau)\right)\left(\source(\emptyarg,\solA)-\source(\emptyarg,\solAvar)\right)
	-\left(\mathrm{sign}_+(\solA-\solAvar^\tau)\right)\left(\source(\emptyarg,\solAvar)-\sourcevar(\emptyarg,\solAvar)^\tau\right)\\
	\geq& -\LipBoundSource(\solA-\solAvar)_+ - \abs{\source(\emptyarg,\solAvar)-\sourcevar(\emptyarg,\solAvar)^\tau}.
	\end{align*}
	Again, $\sourcevar(\emptyarg,\solAvar)^\tau\to \sourcevar(\emptyarg,\solAvar)$ in $L^2(\Omega_T)$ as $\tau\to 0$.
	The other arguments are analogous as in the previous case.
	
	It remains to show \eqref{eq:L1-contraction}. 
	To this end suppose that $\solA$ and $\solAvar$ are two solutions.
	We change the proof of \Cref{lem_L1-contraction.double.time.estimate} as follows.
	Define the functions
	\[
	\Psi_\delta^{\abs{\emptyarg}}=\Psi_\delta^{+}+\Psi_\delta^{-},\quad \psi_\delta^{\abs{\emptyarg}}=\left[\Psi_\delta^{\abs{\emptyarg}}\right]'
	\]
	and observe that $|{\psi_\delta^{\abs{\emptyarg}}}|\leq 1$ 
	and that $\psi_\delta^{\abs{\emptyarg}}$ is an odd function.
	Moreover,  \eqref{eq:solution.is.continuous.mapping.properties.of.convex.function} implies that
	\[
	0\leq\Psi_\delta^{\abs{\emptyarg},\star}(z;\bar{z})\leq \abs{z-\bar{z}}\quad\text{and}\quad \Psi_\delta^{\abs{\emptyarg},\star}(z;\bar{z})\to \abs{z-\bar{z}}\quad\text{as $\delta\to 0$}
	\]
	for all $z\in{I}$.
	Furthermore, $\Psi_\delta^{\abs{\emptyarg}}$ is the sum of a convex and concave function with bounded derivatives, hence \Cref{prop:chain.rule.in.time} still holds by \Cref{rem:chain.rule.linearity}.
	
	We multiply the equations for $\solA$ and $\solAvar$ with the test functions
	\[
	\eta(x,t_1)=\psi_\delta^{\abs{\emptyarg}}(\solC(x,t_1)-\tilde\solC(x,t_2))\quad\text{and}\quad \eta(x,t_2)=\psi_\delta^{\abs{\emptyarg}}(\tilde\solC(x,t_2)-\solC(x,t_1)),
	\]
	respectively.
	Proceeding as before, we then obtain the estimate
	\begin{equation}\label{eq:L1-contraction.double.time.estimate.absolute.value}
	\iiint_{Q_T}\left(-\abs{\solA-\solAvar}(\testC_{t_1}+\testC_{t_2})-\abs{F-\tilde{F}}\testC\right)\leq 0
	\end{equation}
	for all non-negative $\varphi\in C^\infty_c(\overline{\Omega}\times(0,T))$,	which now replaces \eqref{eq:L1-contraction.double.time.estimate}.
	Inequality \eqref{eq:L1-contraction.double.time.estimate.absolute.value} 
	implies \eqref{eq:L1-contraction} using the same arguments as above in the proof of 
	\eqref{eq:L1-contraction.sub-supersolution}.
\end{proof}

To prove \Cref{thm:Well-posedness} we need the following lemma providing the well-posedness of an auxiliary elliptic problem.
\begin{lemma}\label{lem:aux.elliptic.problem}
	Let $\Omega\subset \IR^N$ be a bounded Lipschitz domain, $\Gamma\subseteq\partial\Omega$ be measurable with Hausdorff measure $H^{N-1}(\Gamma)>0$ and
	$c_1,c_2$ be positive constants. Then, there exists a unique solution 
	$\solB\in L^\infty(\Omega)\cap [c_2+{V}]$
	of the elliptic problem with mixed boundary conditions 
	\begin{equation}\label{eq:aux.elliptic.problem.for.barrier.function}
	\left\{
	\begin{aligned}
	-\Delta\solB		&=c_1	&\quad&\text{in }\Omega,\\
	\solB				&=c_2	&\quad&\text{on }\Gamma,\\
	\partial_\nu\solB	&=0		&\quad&\text{on }\partial\Omega\backslash\Gamma,
	\end{aligned}
	\right.
	\end{equation}
	and the solution satisfies $\solB\geq c_2$ a.e. in $\Omega$.
\end{lemma}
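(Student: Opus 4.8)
The plan is to reformulate \eqref{eq:aux.elliptic.problem.for.barrier.function} as a homogeneous problem in $V$, obtain existence and uniqueness from the Lax--Milgram theorem, and then establish the lower bound and the $L^\infty$-bound by testing with suitable truncations.

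First I would write $\solB = c_2 + z$ with $z \in V$, so that the mixed boundary conditions are encoded by membership in $V$. Since $c_2$ is constant, $\nabla \solB = \nabla z$, and the weak formulation of \eqref{eq:aux.elliptic.problem.for.barrier.function} becomes: find $z \in V$ such that
\begin{equation*}
  \inprod{\nabla z}{\nabla \testC} = c_1 \int_\Omega \testC \qquad \text{for all } \testC \in V.
\end{equation*}
The bilinear form $(z,\testC) \mapsto \inprod{\nabla z}{\nabla \testC}$ is continuous on $V \times V$ and, by the Poincar\'e inequality on $V$ (valid since $H^{N-1}(\Gamma) > 0$), it is coercive; the right-hand side is a bounded linear functional on $V$ because $\Omega$ is bounded. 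The Lax--Milgram theorem then yields a unique $z \in V$, hence a unique $\solB = c_2 + z \in c_2 + V$.

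Next I would prove the lower bound $\solB \geq c_2$, equivalently $z \geq 0$. Testing with $\testC = z_- := (-z)_+$, which lies in $V$ since $\Tr z_- = (\Tr z)_- = 0$ on $\Gamma$ by \Cref{lem:trace.interchanged.function}, gives
\begin{equation*}
  -\norm{\nabla z_-}_{L^2(\Omega)}^2 = \inprod{\nabla z}{\nabla z_-} = c_1 \int_\Omega z_- \geq 0,
\end{equation*}
so $\nabla z_- = 0$; since $\Tr z_- = 0$ on $\Gamma$ and $\Gamma$ has positive measure, Poincar\'e forces $z_- = 0$, i.e. $z \geq 0$.

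The main work is the bound $\solB \in L^\infty(\Omega)$, which I would obtain by Stampacchia's truncation method applied to $z$. For $k \geq 0$ the function $(z - k)_+$ belongs to $V$ (again by \Cref{lem:trace.interchanged.function}, since $s \mapsto (s-k)_+$ vanishes at $0$), and testing with it yields
\begin{equation*}
  \norm{\nabla (z-k)_+}_{L^2(\Omega)}^2 = c_1 \int_{\{z > k\}} (z - k).
\end{equation*}
Writing $\mu(k) = \abs{\{z > k\}}$, I would combine the Sobolev embedding $V \hookrightarrow L^{2^*}(\Omega)$ (via Poincar\'e, choosing any finite exponent when $N \leq 2$) with H\"older's inequality on the right-hand side to derive, for $h > k \geq 0$, an inequality of the form
\begin{equation*}
  \mu(h) \leq \left( \frac{C c_1}{h - k} \right)^{2^*} \mu(k)^{2^* - 1},
\end{equation*}
where $2^* - 1 > 1$. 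Stampacchia's iteration lemma then shows $\mu(k) = 0$ for all $k$ beyond some finite $k_0$, so $z \leq k_0$ and $\solB = c_2 + z \leq c_2 + k_0$ a.e. Together with the lower bound this gives $\solB \in L^\infty(\Omega)$, completing the proof. The delicate point is this last estimate: one must ensure that the Sobolev and Poincar\'e inequalities are available on $V$ for a Lipschitz domain with mixed boundary conditions, and track the dimension-dependent exponent so that the iteration lemma applies.
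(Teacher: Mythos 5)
Your proposal is correct, and its skeleton coincides with the paper's: the same reduction to a homogeneous problem in $V$, existence and uniqueness from the variational formulation with coercivity supplied by the Poincar\'e inequality on $V$ (the paper phrases this via the Riesz Representation Theorem for the inner product $B[\solB_1,\solB_2]=\int_\Omega\nabla \solB_1\cdot\nabla \solB_2$, which for a symmetric coercive form is the same thing as your Lax--Milgram step), and the identical test-function argument with $(\solB-c_2)_-$ for the lower bound. The one genuine difference is the $L^\infty$-bound: the paper does not prove it but cites the proof of Theorem 8.15 in Gilbarg--Trudinger, remarking only that that proof rests on Poincar\'e's inequality and therefore carries over to functions in $V$; you instead give a self-contained Stampacchia truncation--iteration argument, testing with $(z-k)_+\in V$, combining the Sobolev embedding with H\"older to get $\mu(h)\leq\bigl(Cc_1/(h-k)\bigr)^{2^*}\mu(k)^{2^*-1}$, and invoking the iteration lemma (with the correct caveat about choosing a finite exponent greater than $2$ when $N\leq 2$, so that the exponent $2^*-1$ exceeds $1$). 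What your version buys is precisely the verification the paper leaves implicit: it makes explicit that the only inequalities needed are Poincar\'e and Sobolev on $V$, and that the truncations $(z-k)_+$ remain admissible test functions in the mixed-boundary setting (via the trace lemma, \Cref{lem:trace.interchanged.function}), which is the delicate point when adapting a Dirichlet-boundary result to $\Gamma\subsetneq\partial\Omega$. The cost is length; the paper's citation is shorter but asks the reader to re-inspect an external proof to confirm it adapts.
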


\begin{proof}
	Consider the symmetric bilinear form 
	$$
	B:V\times V\to \mathbb{R},  \qquad 
	B[\solB_1,\solB_2]=\int_\Omega\nabla\solB_1\cdot\nabla\solB_2.
	$$ 
	Then, we have  $\abs{B[\solB_1,\solB_2]}\leq \norm{\solB_1}_{H^1(\Omega)}\norm{\solB_2}_{H^1(\Omega)} \forall w_1,w_2\in V$ and, by the Poincar\'e inequality, it follows that  $\norm{\solB}^2_{H^1(\Omega)}\leq CB[\solB,\solB]\ \forall v\in V$, for some constant $C>0$. Hence, $B[\emptyarg,\emptyarg]$ defines an inner product and the induced norm is equivalent to $\|\cdot\|_{H^1(\Omega)}$.
	Therefore, Riesz's Representation Theorem implies that there exists a unique solution $\solBvar\in V$ of \eqref{eq:aux.elliptic.problem.for.barrier.function} with $c_2=0$ and $\solB:=\solBvar+c_2$ is the unique solution of \eqref{eq:aux.elliptic.problem.for.barrier.function}.
	The $L^\infty(\Omega)$-bound follows from the proof of Theorem 8.15 	in \cite{Gil-Tru98}.
	Indeed, it is based on Poincar\'e's inequality, which holds for functions in $V$.
	
	Finally, to show that $\solB\geq c_2$ we multiply \eqref{eq:aux.elliptic.problem.for.barrier.function} with the test function $\solBvar_-=(\solB-c_2)_-\in V$ which implies that
	\[
	-\norm{\nabla(\solB-c_2)_-}_{L^2(\Omega)}^2=\int_\Omega\nabla\solB\cdot\nabla(\solB-c_2)_-=\inprod{c_1}{(\solB-c_2)_-}.
	\]
	The left-hand side is non-positive and the right-hand side non-negative. Hence, $(\solB-c_2)_-=0$ in $V$ which implies that $\solB\geq c_2$ a.e.\ in $\Omega$.
\end{proof}

\begin{proof}[Proof of \Cref{thm:Well-posedness}]
	We prove each statement separately.
	\begin{enumerate}[wide, labelwidth=0pt, labelindent=0pt, topsep=0pt,itemsep=0ex,partopsep=1ex,parsep=0ex]
		\item By assumption \eqref{itm:nonlin.blow-up.assumption}, $\nonlin$ is invertible and we denote its inverse by $\beta:\IR\to {I}$.
		Note that $\beta$ is continuous and strictly increasing and hence, we can rewrite \eqref{eq:SD-PME.Problem.DiffEqn} as
		\[
		\partial_t\beta(\solC)=\Delta\solC+\source(\emptyarg,\beta(\solC)),
		\]
		where $\solC=\nonlin(\solA)$. 
		Furthermore, we introduce the notation $\solC^D=\nonlin(\solA^D)$ and $\solC_0=\nonlin(\solA_0)$.
		By Theorem 1.7 in \cite{Alt-Luck1983}, there exists a  solution $\solC:\Omega_T\to \IR$ within the class 
		\[
		\beta(\solC)\in \mcW,\quad \solC\in \solC^D+\mcV
		\]
		satisfying the identities analogous to \eqref{eq:SD-PME.solution.including.time-derivative.init.cond.id} and \eqref{eq:SD-PME.solution.including.time-derivative.id}.
		This class coincides with \Cref{def:SD-PME.solution.including.time-derivative} and hence, $\solA=\beta(\solC)$ is a solution of \eqref{eq:SD-PME.Problem}.
		The uniqueness of solutions and continuous dependence on initial data under the assumption \eqref{itm:source.growth.condition} follows from \Cref{thm:L1-contraction}.
		
		\item Let us show that \eqref{eq:energy_estimate} holds. To this end let
		$\tau\in[0,T]$ and write $\Omega_\tau=\Omega\times(0,\tau)$. We apply \eqref{eq:SD-PME.solution.including.time-derivative.id} to the test function $(\solC-\solC^D)\chi_{[0,\tau]}$ and use \Cref{prop:chain.rule.in.time} to rewrite the term involving the time derivative to obtain
		\begin{equation}\label{est_0}
		\int_{\Omega}\Nonlin(\solA(\tau);\solA^D)+\iint_{\Omega_{\tau}}\nabla\solC\cdot\nabla(\solC-\solC^D)=\int_{\Omega}{\Nonlin(\solA_0;\solA^D)}+\iint_{\Omega_{\tau}}\source(\emptyarg,\solA)(\solC-\solC^D).
		\end{equation}
		To estimate the second term from below, we use Young's inequality,
		\begin{align*}
		\iint_{\Omega_{\tau}}{\nabla\solC}\cdot{\nabla(\solC-\solC^D)}=\iint_{\Omega_{\tau}}\abs{\nabla\solC}^2-\nabla\solC\cdot\nabla\solC^D\geq\frac{1}{2}\iint_{\Omega_{\tau}}\left(\abs{\nabla\solC}^2-|{\nabla\solC^D}|^2\right).
		\end{align*}
		and the reaction term $\source(\emptyarg,\solA)(\solC-\solC^D)$ is estimated using the Cauchy-Schwarz inequality.
		We apply these estimates to \eqref{est_0} and conclude that 
		\begin{equation}\label{est_1}
		\begin{aligned}
		\norm{\Nonlin(\solA(\tau);\solA^D)}_{L^1(\Omega)}+\norm{\nabla\solC}_{L^2(\Omega_\tau)}^2
		\leq C\ \Bigl(	&\norm{\Nonlin(\solA_0;\solA^D)}_{L^1(\Omega)}+\norm{\nabla\solC^D}_{L^2(\Omega_\tau)} \\
		&+\norm{\source(\emptyarg,\solA)}_{L^2(\Omega_\tau)}\norm{\solC-\solC^D}_{L^2(\Omega_\tau)}\Bigr),
		\end{aligned}
		\end{equation}
		where the constant $C\geq 0$ is independent of the data.
		Consider \eqref{est_1} for each term on the left separately. 
		Taking the supremum over $\tau\in[0,T]$ for each estimate and adding them yields \eqref{eq:energy_estimate}.
		
		\item Let us prove that $\nonlin(\solA)=\solC$ is bounded assuming, in addition, that $\solC_0\in L^\infty(\Omega)$.
		In view of \Cref{lem:Steklov.averaged.solution.convergence} extend $\solA(t)=\solA_0$ for $t<0$.
		Suppose that $\abs{\solC_0}\leq {M}$ for some ${M}\geq 0$ large enough such that $\abs{\solC^D}\leq {M}$, which is possible by \eqref{itm:Dirichlet.boundary.condtion}.
		We define 
		\[
		c_1=\norm{\source}_{L^\infty(\Omega_T\times{I})}\qquad\text{and}\qquad c_2= {M}
		\]
		and consider the auxiliary problem \eqref{eq:aux.elliptic.problem.for.barrier.function}.
		By \Cref{lem:aux.elliptic.problem}, there exists a unique solution
		$\solB\in L^\infty(\Omega)\cap[c_2+V]$ and $\solB\geq c_2$.
		Observe that the function $(x,t)\mapsto \beta(\solB(x))$ is a supersolution of \eqref{eq:SD-PME.Problem}, because $\solB\geq c_2\geq \solC_0,\solC^D$ and
		\[
		\int_0^T\left(\pinprod{\partial_t\beta(\solB)}{\testA}+\int_\Omega\nabla\solB\cdot\nabla\testA \right)=\int_0^T\inprod{c_1}{\eta}\geq \int_0^T\inprod{\source(\emptyarg,\solB)}{\testA}
		\]
		for any non-negative test function $\eta\in\mcV$.
		Similarly, $(x,t)\mapsto \beta(-\solB(x))$ is a subsolution.
		Set $K:=\norm{\solB}_{L^\infty(\Omega)}$ and $M_0=\max\{\beta(K),-\beta(-K)\}\in{I}$, then by \Cref{thm:L1-contraction} we conclude that 
		\[
		-M_0\leq\beta(-\solB)\leq\solA\leq\beta(\solB)\leq M_0,
		\]
		that is, $\solA$ is bounded.
		Obviously, $\solC$ is bounded by $K$.
		
		\item From now on assume that \eqref{itm:nonlin.convex.differentiable.assumption} holds as well.
		Let us show that $\solC\in W^{1,2}(0,T;L^2(\Omega))$ provided that $\solC_0\in H^2(\Omega)\cap[\solC^D+{V}]$.
		Define $K':=\norm{\nonlin'(\solA)}_{L^\infty(\Omega_T)}$ and extent \eqref{eq:SD-PME.Problem.DiffEqn} for negative time by setting
		\[
		\solA(x,t)=\solA_0(x),\quad \source(x,t,z)=[\Delta\solC_0](x)\quad\text{for}\ t<0,\ x\in\Omega,\ z\in{I}.
		\]
		Indeed, \eqref{eq:SD-PME.Problem.DiffEqn} still holds, because $\solA_t$ vanishes for $t<0$ by \Cref{lem:Steklov.averaged.solution.convergence} and thus, we have $\Delta\solC_0=\source$ which, by definition, holds in the weak sense.
		
		Our aim is to multiply the equation by $\partial_t\solC$ to obtain the relevant estimates for $\solA$.
		However, $\partial_t\solC$ lacks the correct regularity to be used as a test function.
		Instead, we provide estimates for the Steklov average $\solC^h$ and pass to the limit $h\to 0$.
		First, we consider the backwards Steklov averaged version of \eqref{eq:SD-PME.Problem.DiffEqn}. 
		Fix $h>0$ and $t\in(0,T)$ and use the test function $\testA(x,s)=\frac{1}{h}\chi_{[-h,0]}(t+s)\testB(x)$ in \eqref{eq:SD-PME.solution.id} to obtain the identity
		\begin{equation}\label{eq:SD-PME.Problem.solution.id.steklov.averaged}
		\inprod{\partial_t\solA^h(t)}{\testB}+\inprod{\nabla{\solC}^h(t)}{\nabla\testB}=\inprod{\source(\emptyarg,\solA)^h(t)}{\testB}
		\end{equation}
		for any $\testB\in {V}$, for all $t\in(0,T)$.
		Note that the $({V}^*,{V})$ pairing has been replaced by the inner product, since $\solA\in L^2(\Omega_T)$ by $\solA$ being bounded and therefore, $\solA^h\in W^{1,2}(0,T;L^2(\Omega))$.
		Fix $t$ and apply \eqref{eq:SD-PME.Problem.solution.id.steklov.averaged} to the test function
		\[
		\psi=\partial_t{\solC}^h(t)=\frac{1}{h}[{\solC(t)}-{\solC(t-h)}].
		\]
		This test function is admissible, since ${\solC}(t)-\solC^D$ and ${\solC}(t-h)-\solC^D$ are in ${V}$ for almost every $t<T$.
		Indeed, we assumed that $\solC_0\in\solC^D+{V}$.
		Integrating the identity over $t\in(0,\tau)$ for some $\tau\in(0,T)$ we obtain
		\begin{equation}\label{eq:SD-PME.Problem.Regularized.steklov.averaged.ineq.uniform.bounds}
		\int_0^\tau\inprod{\partial_t\solA^h}{\partial_t{\solC}^h}+\frac{1}{2}\norm{\nabla{\nonlin(\solA(\tau))}^h}_{L^2(\Omega)}^2=\frac{1}{2}\norm{\nabla{\solC_0}}_{L^2(\Omega)}^2+\int_0^\tau \inprod{\source(\emptyarg,\solA)^h}{\partial_t{\solC}^h}.
		\end{equation}
		Young's inequality implies that  
		\begin{equation}\label{eq:SD-PME.Problem.Regularized.steklov.averaged.ineq.source.term_Young's_ineq}
		\int_0^\tau\inprod{\source(\emptyarg,\solA)^h}{\partial_t{\solC}^h}\leq C(\lambda)\LipBoundSource^2{M}_0^2\abs{\Omega_T}+\lambda\int_0^\tau \norm{\partial_t{\solC^h}}^2_{L^2(\Omega)}
		\end{equation}
		for some constant $C(\lambda)$ and arbitrary $\lambda>0$.
		By the Mean Value Theorem we have that $\abs{\partial_t\solC^h}\leq K'\abs{\partial_t\solA^h}$, for some $K'>0$, and hence,
		\begin{equation}\label{eq:SD-PME.Problem.Regularized.steklov.averaged.nonlin.MVT}
		\abs{\partial_t{\solC^h}}^2\leq K'|\partial_t\solA^h||\partial_t{\solC^h}|=K'(\partial_t\solA^h) (\partial_t{\solC}^h),
		\end{equation}
		using that $\partial_t\solA^h$ and $\partial_t\solC^h$ have the same sign.
		We estimate the first term of the left-hand side in \eqref{eq:SD-PME.Problem.Regularized.steklov.averaged.ineq.uniform.bounds} from below by \eqref{eq:SD-PME.Problem.Regularized.steklov.averaged.nonlin.MVT} and hence, setting $\lambda=\frac{1}{2K'}$ we can absorb the last term in  \eqref{eq:SD-PME.Problem.Regularized.steklov.averaged.ineq.source.term_Young's_ineq} into the left-hand side of \eqref{eq:SD-PME.Problem.Regularized.steklov.averaged.ineq.uniform.bounds}.
		We conclude that
		\begin{equation}\label{eq:well-posedness.regularized.data.final.bound}
		\norm{\partial_t{\solC^h}}^2_{L^2(\Omega_T)}+\norm{\nabla\solC^h}_{L^\infty(0,T;L^2(\Omega))}\leq C,
		\end{equation}
		for some constant $C>0$ that depends on $\norm{\nabla\solC_0}_{L^2(\Omega)}$, which is bounded since $\solC_0\in H^2(\Omega)$, but it is independent of $h$.
		Weak compactness of $L^2$-spaces implies that $\partial_t{\solC^h}$ weakly converges in $L^2(\Omega_T)$ along a subsequence, necessarily to $\partial_t\solC$, which proves that $\solC\in W^{1,2}(0,T;L^2(\Omega))$.
		Furthermore, \Cref{lem:Steklov.properties} now implies that the convergence as $h\to 0$ also holds in norm and hence, the upper bound in \eqref{eq:well-posedness.regularized.data.final.bound} holds for $\solC$ as well.
		
		Next, we prove this result for $\solC_0\in L^\infty(\Omega)\cap[\solC^D+{V}]$.
		Let $\solA_{0k}\in L^\infty(\Omega)\cap H^2(\Omega) \cap[\solC^D+{V}]$ be a sequence such that $\norm{\solA_{0k}}_{L^\infty(\Omega)}\leq \norm{\solA_{0}}_{L^\infty(\Omega)}$, $\norm{\nabla\solC_{0k}}_{L^2(\Omega)}\leq\norm{\nabla\solC_0}_{L^2(\Omega)}$ and $\solA_{0k}\to\solA_0$ in $L^1(\Omega)$.
		\Cref{thm:Well-posedness} provides the corresponding solutions $\solA$ and $\solA_k$ with $\solC_k \in W^{1,2}(0,T;L^2(\Omega))$.
		\Cref{thm:L1-contraction} shows that $\solA_k\to\solA$ in $L^\infty(0,T;L^1(\Omega))$, since the right-hand side of \eqref{eq:L1-contraction} vanishes as $k\to\infty$.
		It follows that $\solC_k\to\solC$ a.e.\ in $\Omega$ along a subsequence.
		Moreover, $\solC_k$ is bounded by ${K}$, hence $\{\solC_k\}_{k=1}^\infty$ is a bounded sequence in $L^2(\Omega_T)$ and thus, it converges weakly to $\solC$ in $L^2(\Omega)$ along a subsequence.
		Next, \eqref{eq:well-posedness.regularized.data.final.bound} provides a bound in $L^2(\Omega_T)$ for ${\partial_t\solC_k}$, uniform with respect to $k$.
		Consequently, it converges along a subsequence weakly in $L^2(\Omega_T)$, necessarily to $\partial_t\solC$.
		Therefore, $\partial_t\solC\in L^2(\Omega_T)$ as desired.	
		
		\item It remains to show that $\solA\in C([0,T];L^1(\Omega))$.
		Suppose $\nonlin'\geq\alpha>0$, then the inverse $\beta:=\nonlin^{-1}$ is a continuously differentiable function with bounded derivative and consequently, $\solA$ and $\solA^D$ are weakly differentiable with respect to space.
		Indeed, $\nabla\solA=\beta'(\solA)\nabla\nonlin(\solA)\in L^2(\Omega_T)$ and $\nabla\solA^D=\beta'(\solA^D)\nabla\nonlin(\solA^D)\in L^2(\Omega)$.
		By \Cref{lem:trace.interchanged.function}, $\nonlin\left(\Tr\solA)\right)=\Tr\nonlin(\solA)=\Tr\nonlin(\solA^D)=\nonlin\left(\Tr\solA^D\right)$ a.e.\ on $\Gamma$, and
		therefore, $\solA-\solA^D\in L^2(0,T;{V})$.
		Using that $\solA_t\in L^2(0,T;{V}^*)$, standard arguments imply that $\solA\in C([0,T];L^2(\Omega))$, see for instance Theorem 3 in Section 5 
		in \cite{evans2010}.
		Finally, by H\"older's inequality it follows that $\solA\in C([0,T];L^1(\Omega))$.
		
		Now, suppose $\nonlin'(0)=0$, $\nonlin$ is convex on ${I}\cap[0,\infty)$ and concave on ${I}\cap(-\infty,0]$, i.e.
		$\nonlin_+$ and $\nonlin_-$ are convex.
		We first prove that $\solA_{\pm}\in C([0,T];L^1(\Omega))$.
		To this end we observe that
		\begin{equation}\label{eq:superadditivity.in.reverse.triangele.ineq.}
		\nonlin_+(\abs{a-b})\leq\abs{\nonlin_+(a) - \nonlin_+(b)},\quad
		\nonlin_-(-\abs{a-b})\leq\abs{\nonlin_-(-a) - \nonlin_-(-b)}\quad\text{for all}\ a,b\geq 0,
		\end{equation}
		which follows from the convexity of $\nonlin_\pm$ and $\nonlin(0)=0$.
		Indeed, the super-additivity of $\nonlin_+$ implies that $\nonlin_+(a-b)+\nonlin_+(b)\leq\nonlin_+(a)$ if $a\geq b$, and $\nonlin_+(b-a)+\nonlin_+(a)\leq\nonlin_+(b)$ if $b\geq a$.
		A similar argument holds for $\nonlin_-$.
		From Jensen's inequality it follows that
		\begin{align*}
		&\nonlin_+\left(\fint_{\Omega}\abs{\solA_{+}(t)-\solA_{+}(s)}\right)\leq\fint_{\Omega}\nonlin_+\left(\abs{\solA_{+}(t)-\solA_{+}(s)}\right) \stackrel{\eqref{eq:superadditivity.in.reverse.triangele.ineq.}}{\leq} \fint_{\Omega}\abs{\nonlin_+(\solA_{+}(t))-\nonlin_+(\solA_{+}(s))}\\
		&=\fint_{\Omega}\abs{\solC_{+}(t)-\solC_{+}(s)} =\fint_{\Omega}\abs{(\solC(s)+\solC(t)-\solC(s))_+-\solC_{+}(s)} \stackrel{\eqref{eq:solution.is.continuous.mapping.triangle_ineq-like.estimate}}{\leq}\fint_{\Omega}(\solC(t)-\solC(s))_+\to 0
		\end{align*}
		as $s\to t$, since $\solC\in C([0,T];L^1(\Omega))$.
		Continuity of $\beta=\nonlin^{-1}$ implies that $\solA_{+}\in C([0,T];L^1(\Omega))$.
		Similarly, for $\solA_{-}$ we have
		\begin{align*}
		&\nonlin_-\left(-\fint_{\Omega}\abs{\solA_{-}(t)-\solA_{-}(s)}\right)\leq\fint_{\Omega}\nonlin_-\left(-\abs{\solA_{-}(t)-\solA_{-}(s)}\right) \stackrel{\eqref{eq:superadditivity.in.reverse.triangele.ineq.}}{\leq} \fint_{\Omega}\abs{\nonlin_-(-\solA_{-}(t))-\nonlin_-(-\solA_{-}(s))}\\
		&=\fint_{\Omega}\abs{\solC_{-}(t)-\solC_{-}(s)}=\fint_{\Omega}\abs{(\solC(s)+\solC(t)-\solC(s))_--\solC_{-}(s)} \stackrel{\eqref{eq:solution.is.continuous.mapping.triangle_ineq-like.estimate}}{\leq}\fint_{\Omega}(\solC(t)-\solC(s))_-\to 0
		\end{align*}
		as $s\to t$, and hence, $\solA_{-}\in C([0,T];L^1(\Omega))$.
		It follows that $\solA=\solA_{+}-\solA_{-}\in C([0,T];L^1(\Omega))$.
		
		Finally, we generalize this result for $\solA_0\in L^1(\Omega)$.
		Let $\solA_{0m}:\Omega\to {I}$ be a sequence such that $\solC_{0m}\in L^\infty(\Omega)\cap[\solC^D+{V}]$, $\solA_{0m}\to\solA_0$ in $L^1(\Omega)$.
		Then, \Cref{thm:Well-posedness} yields solutions $\solA_m\in C([0,T];L^1(\Omega))$ corresponding to initial data $\solA_{0m}$ and $\solA\in L^\infty(0,T;L^1(\Omega))$ corresponding to $\solA_0$.
		Moreover, \Cref{thm:L1-contraction} implies that $\{\solA_m\}_{m=1}^\infty$ converges to $\solA$ in $L^\infty(0,T;L^1(\Omega))$, since the right-hand side of \eqref{eq:L1-contraction} vanishes in the limit $m\to \infty$.
		Therefore, $\solA\in C([0,T];L^1(\Omega))$, since it is a closed subspace of $L^\infty(0,T;L^1(\Omega))$. \qedhere
	\end{enumerate}
\end{proof}

\section{Well-posedness of the coupled system}\label{sect:Well-posedness}

In this section, we prove the well-posedness for the coupled system \eqref{eq:SD-PME.coupled.Problem}.

\begin{proof}[Proof of \Cref{thm:Well-posedness.biofilm}]
	First, we extent $\nonlin$ to the open interval $(-1,1)$ anti-symmetrically, i.e.\ we set $\nonlin(-z)=-\nonlin(z)$ for $z\in (-1,0)$.
	Next, we extent ${f}$ and ${g}$ to $\Omega_T\times\IR^2$ such that \eqref{itm:source.Lipschitz.assumption.coupled} holds (e.g., set ${f}(\emptyarg,u,\emptyarg)={f}(\emptyarg,0,\emptyarg)$ for $u<1$, ${f}(\emptyarg,u,\emptyarg)=\lim_{z\nearrow 1}{f}(\emptyarg,z,\emptyarg)$ for $u\geq 1$, etc.).
	Note that ${f}$ and ${g}$ are bounded by $2\LipBoundSource$.
	We define the spaces
	\[
	\begin{aligned}
	\mcX&=\{\solA:\Omega_T\to(-1,1) :\ \solA\in \mcW_1,\ \nonlin(\solA)\in \nonlin(\solA^D)+\mcV_1,\ \text{\eqref{eq:SD-PME.solution.including.time-derivative.init.cond.id} holds with respect to}\ \solA_0\},\\
	\mcY&=\left\{\solB\in \mcW_2\cap [\solB^D+\mcV_2] :\ \solB(0)=\solB_0\right\},
	\end{aligned}
	\]
	where we observe that $\mcW_2\cap[\solB^D+\mcV_2]\subset C([0,T];L^2(\Omega))$. Therefore, the condition $\solB(0)=\solB_0$ is well-defined and equivalent with  \eqref{eq:SD-PME.solution.including.time-derivative.init.cond.id} satisfied by $\solB_0$.
	Indeed, given $\solB\in \mcW_2\cap[\solB^D+\mcV_2]$ we have that $\solBvar:=\solB-\solB^D\in\mcV$ and $\solBvar_t\in L^2(0,T;{V}_2^*)$.
	Standard arguments then imply that $\solBvar\in C([0,T];L^2(\Omega))$, see for example Theorem 3 in Section 5.9 
	of \cite{evans2010}.
	
	Let $\mcT_1:\mcX\to\mcY$ denote the mapping that assigns to $\solA\in\mcX$ the solution $\solB\in \mcY$ of the second equation in \eqref{eq:coupled.SD-PME.problem.diff.eq}. It is well-defined since a unique solution $v$ exists by \Cref{thm:Well-posedness}, substituting $I=\IR$, $\nonlin(z)=z$ and $\source(\emptyarg,\solB)={g}(\emptyarg,\solA,\solB)$.
	Similarly, let $\mcT_2:\mcY\to\mathcal{{X}}$ denote the mapping that assigns to $\solB\in\mcY$ the solution $\solA\in \mcX$ of the first equation in \eqref{eq:coupled.SD-PME.problem.diff.eq}. 
	A unique solution exists by \Cref{thm:Well-posedness}, taking $I=(-1,1)$ and $\source(\emptyarg,\solA)={f}(\emptyarg,\solA,\solB)$.
	Observe that by \Cref{thm:L1-contraction}, $\mcT_1$ and $\mcT_2$ are continuous with respect to the $L^\infty([0,T];L^1(\Omega))$-norm.
	Indeed, by \eqref{eq:L1-contraction} we have the estimates
	\begin{align*}
	\norm{\mcT_1(\solA)-\mcT_1(\solAvar)}_{L^\infty([0,T];L^1(\Omega))}
	&\leq e^{\LipBoundSource T}\norm{{g}(\emptyarg,\solA,\mcT_1(\solA))-{g}(\emptyarg,\solAvar,\mcT_1(\solA))}_{L^1(\Omega_T)}\\
	&\leq \LipBoundSource T e^{\LipBoundSource T}\norm{\solA-\solAvar}_{L^\infty([0,T];L^1(\Omega))}
	\end{align*}
	and
	\begin{align*}
	\norm{\mcT_2(\solB)-\mcT_2(\solBvar)}_{L^\infty([0,T];L^1(\Omega))}\leq \LipBoundSource T e^{\LipBoundSource T}\norm{\solB-\solBvar}_{L^\infty([0,T];L^1(\Omega))}.
	\end{align*}
	In particular, for $T>0$ small enough such that $LTe^{\LipBoundSource T}<1$, the mappings are contractions and therefore, 
	\[
	\mcA:=\mcT_2\circ\mcT_1:\mcX\to\mcX
	\]
	is a contraction with respect to the $L^\infty([0,T];L^1(\Omega))$-norm as well.
	Let $\overline{\mcX}$ denote the completion of $\mcX$ with respect to the $L^\infty([0,T];L^1(\Omega))$-topology and extend $\mcA$ to $\overline{\mcX}$ continuously.
	By Banach's Fixed Point Theorem, $\mcA$ has a unique fixed point $\bar{\solA}\in\overline{\mcX}$ and it is the $L^\infty([0,T];L^1(\Omega))$-limit of a sequence $\{\solA_k\}_{k=1}^\infty$ in $\mcX$ given by 
	\[
	\solA_{k}=\mcA^k(\solA)
	\]
	for some $\solA\in\mcX$.
	
	We write $\solB_k=\mcT_1(\solA_k)$ and $\bar{\solB}=\mcT_1(\bar{\solA})$.
	By \eqref{itm:source.positivity.coupled}, zero is a subsolution for both equations in \eqref{eq:SD-PME.coupled.Problem} and hence, \Cref{thm:L1-contraction} implies that $\solA_k\geq 0$ and $\solB_k\geq 0$ a.e.\ in $\Omega_T$.
	Moreover, \eqref{itm:source.positivity.coupled} implies that the constant $1$ is a supersolution for the second equation and we conclude that $\solB_k\leq 1$ a.e.\ in $\Omega_T$.
	Since $\solA_k\to \bar{\solA}$ and $\solB_k\to\bar{\solB}$ a.e.\ in $\Omega_T$ along a subsequence, we conclude that $\bar{\solA}\geq 0$ and $0\leq\bar{\solB}\leq 1$ a.e.\ in $\Omega_T$.
	Furthermore, using these bounds we infer from H\"older's inequality and the boundedness of $\Omega$ that 
	\begin{equation}\label{eq:coupled_equation_strong_L2-convergence}
	\text{$\solA_k\to\bar{\solA}$ and $\solB_k\to\bar{\solB}$ in 
		$L^\infty(0,T;L^2(\Omega))$.}
	\end{equation}
	
	To show that $(\bar{\solA},\bar{\solB})\in\mcX\times\mcY$, we use energy estimates and derive bounds for $\solA_k$ that are independent of $k$.
	For each $\solA_k$ estimate \eqref{eq:energy_estimate} holds, so by \Cref{rem:energy_uniform_bound} we see that
	\begin{equation}\label{eq:energy_estimate_coupled_uniform_bound.1}
	\begin{aligned}
	&\norm{\Nonlin(\solA_k)}_{L^\infty(0,T;L^1(\Omega))}+\norm{\nabla\nonlin(\solA_k)}_{L^2(\Omega_T)}^2\leq {C}_1,
	\end{aligned}
	\end{equation}
	for some constant ${C}_1>0$.
	By exactly the same arguments applied to the second equation in \eqref{eq:coupled.SD-PME.problem.diff.eq} with $\nonlin(z)=z$ and $\Nonlin(z)=\frac{1}{2}z^2$, we conclude that
	\begin{equation}\label{eq:energy_estimate_coupled_uniform_bound.2}
	\norm{\solB_k}_{L^\infty([0,T];L^2(\Omega))}+\norm{\nabla\solB_k}^2_{L^2(\Omega_T)}\leq {C}_2,
	\end{equation}
	for some constant ${C}_2>0$.
	Using that $L^2$-spaces are compact in the weak topology and that limits are unique we infer from \eqref{eq:energy_estimate_coupled_uniform_bound.1} and \eqref{eq:energy_estimate_coupled_uniform_bound.2} that
	\begin{equation}\label{eq:coupled_equation_weak_convergence_along_subseq}
	\text{$\nonlin(\solA_k)\rightharpoonup\nonlin(\bar{\solA})$ and $\solB_k\rightharpoonup\bar{\solB}$ weakly in $L^2(0,T;H^1(\Omega))$ along a subsequence.}
	\end{equation}
	
	We further observe that \eqref{eq:coupled_equation_strong_L2-convergence} and \eqref{itm:source.Lipschitz.assumption.coupled} imply 
	that
	\begin{equation*}
	{f}(\emptyarg,\solA_k,\solB_k)\to {f}(\emptyarg,\bar{\solA},\bar{\solB})\quad \text{in $L^\infty(0,T;L^2(\Omega))$},
	\end{equation*}
	and hence, $\norm{{f}(\emptyarg,\solA_k,\solB_k)}_{L^\infty(0,T;L^2(\Omega))}$  is uniformly bounded with respect to $k$.
	Using the solution identity \eqref{eq:SD-PME.solution.id} for both equations and the boundedness of weakly convergent sequences, we conclude that $\{\partial_t\solA_k\}_{k=1}^\infty$ and $\{\partial_t\solB_k\}_{k=1}^\infty$ are bounded sequences in $L^2(0,T;{V}_1^*)$ and $L^2(0,T;{V}_2^*)$, respectively.
	Indeed, for any $\testA\in\mcV_1$ with $\norm{\testA}_{L^2(0,T;H^1(\Omega))}\leq 1$ we have that 
	\[
	\int_0^T\pinprod{\partial_t\solA_k}{\testA}=\int_0^T-\inprod{\nabla\nonlin(\solA_k)}{\nabla\testA}+\inprod{\source(\emptyarg,\solA_k,\solB_k)}{\testA}\leq C,
	\]
	for some constant $C,$ by the Cauchy-Schwarz inequality.
	The same argument holds for $\solB_k$.
	Since reflexive spaces are compact in the weak topology and limits are unique, we obtain 
	\begin{equation}\label{eq:coupled_equation_weak_convergence_time_derivative}
	\int_0^T\pinprod{\partial_t\solA_k}{\testA}\to\int_0^T\pinprod{\solA_t}{\testA}\ \text{and}\ \int_0^T\pinprod{\partial_t\solB_k}{\testB}\to\int_0^T\pinprod{\solB_t}{\testB}
	\end{equation}
	for all $\testA\in\mcV_1$ and $\testB\in \mcV_2$ along a subsequence.
	
	The convergences in \eqref{eq:coupled_equation_weak_convergence_along_subseq} and \eqref{eq:coupled_equation_weak_convergence_time_derivative} are sufficient to conclude that $(\bar{\solA},\bar{\solB})\in {\mcX}\times{\mcY}$.
	Indeed, by continuity of the trace operator the sets $\nonlin(\solA^D)+\mcV_1$ and $\solB^D+\mcV_2$ are closed in $H^1(\Omega)$.
	The sets are convex as well, so Mazur's Lemma implies that they are weakly closed.
	Therefore, by \eqref{eq:coupled_equation_weak_convergence_along_subseq} we conclude that $\bar{\solA}\in\nonlin(\solA^D)+\mcV_1 $ and $\bar{\solB}\in \solB^D+\mcV_2$.
	Further, \eqref{eq:coupled_equation_weak_convergence_time_derivative} shows weak convergence in $\mcW_1$ and $\mcW_2$ and hence, $\solA_t\in \mcW_1$ and $\solB_t\in \mcW_2$.
	
	Now, $\bar{\solA}$ is the fixed point of $\mcA$, hence $(\bar{\solA},\bar{\solB})$ is a solution of system \eqref{eq:SD-PME.coupled.Problem}. 	
	Uniqueness and the continuous dependence on initial data in the $L^1$-topology follows from \Cref{thm:L1-contraction} applied to both equations in \eqref{eq:coupled.SD-PME.problem.diff.eq} separately and adding the resulting inequalities.
	Similarly, the energy estimate follows from \eqref{eq:energy_estimate} applied to both equations separately and adding the two estimates.
	
	From now on, let $(\solA,\solB)$ denote the solution of \eqref{eq:SD-PME.coupled.Problem}.	
	To conclude that $\mcA$ is a contraction we assumed that $T$ is small enough.
	The restriction on $T$ only depends on $\LipBoundSource$ and therefore, we can remove this requirement by pasting together solutions defined on sufficiently small time intervals of length $\delta T$.
	
	Finally, suppose $\nonlin$ satisfies \eqref{itm:nonlin.convex.differentiable.assumption}, then \Cref{thm:Well-posedness} provides solutions $\solA_k\in C([0,T];L^1(\Omega))$ and hence, $\solA$ also lies in this space since it is a closed subspace of $L^\infty(0,T;L^1(\Omega))$.	
	The final statement of \Cref{thm:Well-posedness.biofilm} is a direct consequence of the final statement of \Cref{thm:Well-posedness} which completes the proof.
\end{proof}

\appendix

\section{Appendix}\label{sect:Appendix}

We prove several properties of Bochner spaces and Steklov averages. 

\begin{proposition}[Fundamental Theorem of Calculus in Bochner spaces]\label{prop:Fund.Thm.Cal.Bochner_spaces}
	Let $X$ be a Banach space, $\solA\in L^1_\loc(\IR,{X})$ and consider the following statements:
	\begin{enumerate}[(i), font=\itshape]
		\item there exists a function $\solB\in L^1_\loc(\IR;{X})$ such that
		\begin{equation}\label{eq:absolutely.continuous.Fundamental.Integral}
		\solA(t)=\solA(0)+\int_{0}^{t}\solB(s)\md s\quad\text{for all $t\in\IR$;}
		\end{equation}
		\item $\solA\in W^{1,1}_\loc(\IR;{X})$;
		\item $\solA$ is almost everywhere differentiable with $\solA'\in L^1_\loc(\IR;{X})$.
		\setcounter{listCounter}{\value{enumi}}
	\end{enumerate}
	Then, the function $\solA$ satisfies \textit{(i)} if and only if $\solA$ satisfies \textit{(ii)}.
	In this case, $\solA$ is locally absolutely continuous, \textit{(iii)} holds and the almost everywhere derivative and the weak derivative coincide with $\solB$ a.e.\ in $\IR$.
	
	If $\solA$ is locally absolutely continuous, then \textit{(i)-(iii)} are equivalent. 
	If $X$ is reflexive, then $\solA$ is locally absolutely continuous if and only if \textit{(i)-(iii)} hold.
\end{proposition}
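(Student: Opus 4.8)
The plan is to establish the equivalence $\textit{(i)}\Leftrightarrow\textit{(ii)}$ first, read off the remaining conclusions from the integral representation, and only afterwards treat the two conditional equivalences, which hinge on a single covering lemma and on the Radon--Nikodym property. For $\textit{(i)}\Rightarrow\textit{(ii)}$ I would verify directly that $\solB$ is the distributional derivative: for $\testC\in C^\infty_c(\IR)$, insert \eqref{eq:absolutely.continuous.Fundamental.Integral} into $\int_\IR\solA\,\testC'$, note that the constant term contributes $\solA(0)\int_\IR\testC'=0$, and apply Fubini's theorem to $\int_\IR\bigl(\int_0^t\solB(s)\,\md s\bigr)\testC'(t)\,\md t$ to obtain $-\int_\IR\solB\testC$. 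For the converse $\textit{(ii)}\Rightarrow\textit{(i)}$, let $\solB$ denote the weak derivative and mollify: with a standard mollifier $\rho_\varepsilon$ the functions $\solA_\varepsilon=\rho_\varepsilon*\solA$ lie in $C^\infty(\IR;X)$ with $\solA_\varepsilon'=\rho_\varepsilon*\solB$, so the classical fundamental theorem gives $\solA_\varepsilon(t)=\solA_\varepsilon(s)+\int_s^t\solA_\varepsilon'$. Since $\solA_\varepsilon\to\solA$ and $\rho_\varepsilon*\solB\to\solB$ in $L^1_\loc(\IR;X)$, I would fix a Lebesgue point $s$ and pass to the limit to recover the integral identity for a.e.\ $t$; the right-hand side being continuous in $t$, this selects the asserted representative and defines $\solA(0)$.

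Granting the representation, $t\mapsto\int_0^t\solB$ is locally absolutely continuous by absolute continuity of the Bochner integral, and hence so is this representative of $\solA$. For $\textit{(iii)}$ and the identification of the derivatives, I would invoke the Lebesgue differentiation theorem in $L^1_\loc(\IR;X)$ --- valid for an arbitrary Banach space because strong measurability forces $\solB$ to be a.e.\ separably valued --- so that almost every $t$ is a Lebesgue point and $\tfrac1h\int_t^{t+h}\solB\to\solB(t)$. Differentiating the representation then yields $\solA'(t)=\solB(t)$ a.e., which also coincides with the weak derivative by the implication $\textit{(i)}\Rightarrow\textit{(ii)}$ already established.

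For the conditional equivalences, assume first that $\solA$ is locally absolutely continuous. Since $\textit{(i)}\Leftrightarrow\textit{(ii)}$ holds unconditionally and implies $\textit{(iii)}$, the only missing implication is $\textit{(iii)}\Rightarrow\textit{(i)}$. Writing $g:=\solA'\in L^1_\loc(\IR;X)$ and $F(t):=\solA(t)-\solA(0)-\int_0^t g$, the function $F$ is locally absolutely continuous with $F'=0$ a.e., and the key lemma is that such an $F$ is constant; I would prove this exactly as in the scalar case by a Vitali covering argument on $\norm{F(b)-F(a)}$, using zero differentiability off a null set $N$ together with absolute continuity to absorb the contribution of a neighbourhood of $N$ of small measure. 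Then $F\equiv F(0)=0$ gives $\textit{(i)}$. For the reflexive case one direction is free, since $\textit{(i)}$ already forces local absolute continuity; for the other, reflexivity yields the Radon--Nikodym property, under which every locally absolutely continuous $X$-valued function is a.e.\ differentiable with $L^1_\loc$-derivative, i.e.\ $\textit{(iii)}$ holds, whence the previous step delivers $\textit{(i)}$--$\textit{(iii)}$.

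\textbf{Main obstacle.} The genuine subtleties are concentrated in the last part. The Banach-valued lemma ``$F'=0$ a.e.\ $\Rightarrow F$ constant'' is the main technical hurdle: its covering argument must control the exceptional null set through absolute continuity rather than by any pointwise estimate there, so one cannot simply quote the scalar statement verbatim. Conceptually, the crux is the appeal to the Radon--Nikodym property, which is the only point where reflexivity is truly used --- indeed, without it $\textit{(iii)}$ can fail for genuinely absolutely continuous functions, so the final equivalence really does break down for non-reflexive $X$.
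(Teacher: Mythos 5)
Your proof is correct, but it is genuinely different in character from the paper's: the paper does not argue at all, it simply cites Lemma 2.5.8, Proposition 2.5.9 and Corollary 2.5.15 of Hyt\"onen--van Neerven--Veraar--Weis, so the entire content (mollification for $\textit{(i)}\Leftrightarrow\textit{(ii)}$, Lebesgue points, and the reflexivity/Radon--Nikod\'ym step) is outsourced to that reference, whereas you reconstruct the textbook arguments in a self-contained way. Your reconstruction is sound: the Fubini computation for $\textit{(i)}\Rightarrow\textit{(ii)}$, the mollification with a Lebesgue point as base point for the converse, the reduction of the Bochner--Lebesgue differentiation theorem to the scalar case via a.e.\ separable valuedness, and the use of the Radon--Nikod\'ym property of reflexive spaces are all exactly the ingredients behind the cited results. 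One remark on what you single out as the main obstacle: the lemma that a locally absolutely continuous $F:\IR\to X$ with $F'=0$ a.e.\ is constant does not require a Banach-valued Vitali covering argument; it scalarizes. For every $x^*\in X^*$ the function $t\mapsto\pinprod{x^*}{F(t)}$ is scalar absolutely continuous (since $\abs{\pinprod{x^*}{F(b)-F(a)}}\leq\norm{x^*}\norm{F(b)-F(a)}_X$) with a.e.\ vanishing derivative, hence constant by the classical scalar result, and Hahn--Banach then forces $F$ to be constant. This collapses the technical hurdle to a one-line reduction; the only place where a genuinely vector-valued theorem is unavoidable is, as you correctly identify, the implication ``reflexive $\Rightarrow$ Radon--Nikod\'ym property'' used for the final equivalence, which is precisely what the paper's citation of Corollary 2.5.15 encapsulates, and your counterexample remark (e.g.\ $t\mapsto\chi_{[0,t]}$ in $L^1$) correctly shows that this equivalence fails without reflexivity-type assumptions.
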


\begin{proof}
	Let $\solA\in L^1_\loc(\IR;{X})$, then Lemma 2.5.8 and Proposition 2.5.9 and their proofs 
	in \cite{Hyt-Neerv-Ver-Weis2016} show that \textit{(i)} holds if and only if \textit{(ii)} holds and that both imply \textit{(iii)}.
	Proposition 2.5.9 also shows that if $\solA$ is locally absolutely continuous, then \textit{(i)-(iii)} are equivalent.
	
	Left to show is that \textit{(i)-(iii)} hold if $\solA$ is locally absolutely continuous and $X$ is reflexive.	
	Now, Corollary 2.5.15 
	in \cite{Hyt-Neerv-Ver-Weis2016} shows that, in this case, \textit{(ii)} holds.
	From the first part of \Cref{prop:Fund.Thm.Cal.Bochner_spaces} it follows that \textit{(i)} and \textit{(iii)} hold as well.
\end{proof}

\begin{lemma}[Duality of Bochner spaces]\label{lem:duality.Bochner_spaces}
	Let $X$ be a reflexive Banach space, then
	\[
	L^p(0,T;X^*)\cong L^q(0,T;X)^*,\quad1<p,q<\infty,\quad\frac{1}{p}+\frac{1}{q}=1,
	\]
	where the isometric isomorphism is given by 
	\[
	\solA\mapsto\left(\solB\mapsto\int_0^T\pinprod{\solA(t)}{\solB(t)}_{X^\ast,X}\md t\right).
	\]
\end{lemma}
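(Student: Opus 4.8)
The plan is to exhibit the canonical pairing map
\[
J:L^p(0,T;X^*)\to L^q(0,T;X)^*,\qquad J(\solA)(\solB)=\int_0^T\pinprod{\solA(t)}{\solB(t)}_{X^*,X}\md t,
\]
and to show that it is a well-defined isometric isomorphism. Well-definedness and the bound $\norm{J(\solA)}\leq\norm{\solA}_{L^p(0,T;X^*)}$ follow at once from the pointwise duality estimate $\abs{\pinprod{\solA(t)}{\solB(t)}_{X^*,X}}\leq\norm{\solA(t)}_{X^*}\norm{\solB(t)}_X$ together with the scalar Hölder inequality in time. The entire content of the lemma is therefore concentrated in two assertions: that this bound is in fact an equality (isometry, hence injectivity), and that every functional in $L^q(0,T;X)^*$ arises this way (surjectivity).

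For the isometry I would fix $\solA\in L^p(0,T;X^*)$ and construct near-optimal test functions. Strong measurability of $\solA$ forces its range to be essentially separably valued, so I may reduce to a separable subspace and use that $\norm{\solA(t)}_{X^*}=\sup\{\pinprod{\solA(t)}{x}_{X^*,X}:\norm{x}_X\leq 1\}$ is attained up to $\varepsilon$ along a countable norming family. A measurable selection (Kuratowski--Ryll-Nardzewski) then produces a strongly measurable $x:(0,T)\to X$ with $\norm{x(t)}_X\leq 1$ and $\pinprod{\solA(t)}{x(t)}_{X^*,X}\geq\norm{\solA(t)}_{X^*}-\varepsilon$. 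Taking $\solB(t)=\norm{\solA(t)}_{X^*}^{p-1}x(t)$ and normalizing yields $J(\solA)(\solB)$ arbitrarily close to $\norm{\solA}_{L^p}\norm{\solB}_{L^q}$, whence $\norm{J(\solA)}\geq\norm{\solA}_{L^p}$.

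The main obstacle, and the only place where reflexivity of $X$ is genuinely used, is surjectivity. Since $X$ is reflexive so is $X^*$, and reflexive spaces possess the Radon--Nikodym property. Given $\Lambda\in L^q(0,T;X)^*$, I would define for Borel sets $E\subseteq(0,T)$ the $X^*$-valued vector measure $\mu(E)$ via $\pinprod{\mu(E)}{x}_{X^*,X}=\Lambda(\chi_E\,x)$ for $x\in X$; this is countably additive, of bounded variation, and absolutely continuous with respect to Lebesgue measure, with variation controlled by $\norm{\Lambda}$. The Radon--Nikodym theorem for $X^*$-valued measures (available precisely because $X^*$ has the Radon--Nikodym property) yields a Bochner-integrable density $\solA$ with $\mu(E)=\int_E\solA(t)\md t$. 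One then verifies $\Lambda(\solB)=\int_0^T\pinprod{\solA(t)}{\solB(t)}_{X^*,X}\md t$ first on simple functions and then on all of $L^q(0,T;X)$ by density, and checks $\solA\in L^p(0,T;X^*)$ with $\norm{\solA}_{L^p}=\norm{\Lambda}$ from the variation estimate and the isometry above.

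I expect the surjectivity step to be the hard part: without the Radon--Nikodym property the vector measure $\mu$ need not admit a Bochner-integrable density and $J$ fails to be onto, so reflexivity is indispensable here rather than a convenience. As this is a classical fact, in the write-up I would cite the corresponding duality theorem for Bochner spaces in \cite{Hyt-Neerv-Ver-Weis2016} (valid whenever $X^*$ has the Radon--Nikodym property, hence for reflexive $X$) and only record the identification of the isometric isomorphism with the pairing $J$ as stated, leaving the routine vector-measure construction to the reference.
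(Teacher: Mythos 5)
Your proposal is correct and ultimately coincides with the paper's own proof, which consists solely of a citation to Corollary 1.3.22 in \cite{Hyt-Neerv-Ver-Weis2016} --- exactly the reference you defer to in your final paragraph. Your sketch (H\"older for boundedness, a measurable-selection argument for the isometry, and the Radon--Nikodym property of $X^*$ for surjectivity) is the standard argument behind that citation and correctly identifies reflexivity as entering only through the Radon--Nikodym property.
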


\begin{proof}
	See Corollary 1.3.22 
	in \cite{Hyt-Neerv-Ver-Weis2016}.
\end{proof}

\begin{lemma}[Properties of Steklov averaging]\label{lem:Steklov.properties}
	Let $X$ be a Banach space and $\solA\in L^1_\loc(\IR;X)$.
	Then, $\solA^h\in W^{1,1}_\loc(\IR;X)$ with $\partial_t\solA^h(t)=\frac{1}{h}[\solA(t)-\solA(t-h)]$ for almost every $t\in\IR$.
	\begin{enumerate}[(i), font=\itshape]
		\item
		If $\solA\in C(\IR;{X})$, then $\solA^h\to\solA$ in $C([a,b];X)$ for every compact interval 
		$[a,b]\subset\IR$.
		\item
		If $\solA\in L^p_\loc(\IR;{X})$, $p\in[1,\infty)$, then $\solA^h\to\solA$ in $L^p_\loc(\IR;{X})$ and $\solA^h(t)\to\solA(t)$ in $X$ for almost every $t\in\IR$.
		\item
		If $\solA\in W^{1,1}_\loc(\IR;X)$, then $\partial_t\solA^h=(\solA_t)^h$ and $\solA^h\to\solA$ in $W^{1,1}_\loc(\IR;X)$.
		\item
		If $\solA\in L^2(0,T;H^1(\Omega))$, then $(\nabla\solA)^h=\nabla\solA^h$.
		\setcounter{listCounter}{\value{enumi}}
	\end{enumerate}
\end{lemma}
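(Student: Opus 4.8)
The plan is to derive every assertion from the primitive together with the Fundamental Theorem of Calculus in Bochner spaces. First I would set $G(t):=\int_0^t\solA(s)\,\md s$, so that by \Cref{def:Steklov.average} one has $\solA^h(t)=\frac{1}{h}\bigl(G(t)-G(t-h)\bigr)$. By \Cref{prop:Fund.Thm.Cal.Bochner_spaces} the function $G$ is locally absolutely continuous with weak derivative $\solA$; hence $\solA^h$, being a scaled difference of translates of $G$, lies in $W^{1,1}_\loc(\IR;X)$, and differentiating the displayed identity gives the weak derivative $\partial_t\solA^h(t)=\frac{1}{h}(\solA(t)-\solA(t-h))$ for a.e.\ $t$. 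This settles the opening assertion and also provides the formula reused in \textit{(iii)}.

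For the convergence statements it is convenient to use the substitution $s=t-hr$, which rewrites $\solA^h(t)=\int_0^1\solA(t-hr)\,\md r$. For \textit{(i)} I would then estimate $\norm{\solA^h(t)-\solA(t)}_X\le\sup_{|s-t|\le h}\norm{\solA(s)-\solA(t)}_X$; on a compact interval the uniform continuity of $\solA$ forces the right-hand side to $0$ uniformly in $t$, giving convergence in $C([a,b];X)$. For the $L^p_\loc$-convergence in \textit{(ii)}, the key quantitative ingredient is the uniform bound $\norm{\solA^h}_{L^p(a,b;X)}\le\norm{\solA}_{L^p(a-h,b;X)}$, obtained from Jensen's inequality applied to the average and Fubini. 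Combining this uniform boundedness with the density of $C_c(\IR;X)$ in $L^p_\loc(\IR;X)$ and the convergence already shown for continuous functions yields $\solA^h\to\solA$ in $L^p_\loc$ by a standard three-$\varepsilon$ argument.

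The a.e.\ pointwise statement $\solA^h(t)\to\solA(t)$ in $X$ is the most delicate point, and I expect it to be the main obstacle. It is exactly the vector-valued Lebesgue differentiation theorem: for a.e.\ $t$ one has $\frac{1}{h}\int_{t-h}^t\norm{\solA(s)-\solA(t)}_X\,\md s\to 0$ (every such $t$ is a Lebesgue point of $\solA$), and $\norm{\solA^h(t)-\solA(t)}_X$ is dominated by this quantity. The subtlety is that this uses strong measurability, i.e.\ essential separability of the range, which is automatic for Bochner-integrable functions; I would reduce to the scalar Lebesgue differentiation theorem applied to the maps $s\mapsto\norm{\solA(s)-x_n}_X$ over a countable dense set $\{x_n\}$ of the essential range, and then let the approximation be sharp on the complement of a null set.

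Parts \textit{(iii)} and \textit{(iv)} are then short. Since $\solA\in W^{1,1}_\loc(\IR;X)$, \Cref{prop:Fund.Thm.Cal.Bochner_spaces} gives $\solA(t)-\solA(t-h)=\int_{t-h}^t\solA_t(s)\,\md s$; dividing by $h$ and comparing with the weak-derivative formula from the first paragraph yields $\partial_t\solA^h=(\solA_t)^h$, after which $\solA^h\to\solA$ in $W^{1,1}_\loc$ follows by applying the $L^1_\loc$-convergence of \textit{(ii)} separately to $\solA$ and to $\solA_t$. Finally, for \textit{(iv)} I would use that $\nabla\colon H^1(\Omega)\to L^2(\Omega)^N$ is a bounded linear operator, so by Hille's Theorem (as invoked in \Cref{rem:embedded.spaces.and.Bochner.and.Steklov}) it commutes with the Bochner integral defining the Steklov average, giving $\nabla\solA^h=\frac{1}{h}\int_{t-h}^t\nabla\solA(s)\,\md s=(\nabla\solA)^h$.
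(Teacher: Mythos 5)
Your proposal is correct, and for the opening claim and parts \textit{(i)}--\textit{(iii)} it follows essentially the paper's own route: the integral representation combined with \Cref{prop:Fund.Thm.Cal.Bochner_spaces} for the $W^{1,1}_\loc$ statement and the derivative formula, uniform continuity on compacts for \textit{(i)}, a density-plus-uniform-bound (Jensen and Fubini) three-$\varepsilon$ argument for the $L^p_\loc$ convergence in \textit{(ii)}, and the identity $\solA(t)-\solA(t-h)=\int_{t-h}^t\solA_t(s)\,\md s$ for \textit{(iii)}. Two points genuinely differ. For the a.e.\ pointwise convergence in \textit{(ii)}, the paper simply cites the vector-valued Lebesgue differentiation theorem (Theorem 2.3.4 and Corollary 2.3.5 in \cite{Hyt-Neerv-Ver-Weis2016}), whereas you sketch its standard proof: reduction to the scalar Lebesgue differentiation theorem applied to $s\mapsto\norm{\solA(s)-x_n}_X$ over a countable dense subset of the essentially separable range. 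This is the classical argument and is sound; it costs more work than citing, but makes the proof self-contained and correctly identifies strong measurability as the point where separability enters. For \textit{(iv)}, you argue that $\nabla:H^1(\Omega)\to L^2(\Omega)^N$ is a bounded linear operator and hence commutes with the Bochner integral defining the average, while the paper instead tests $(\nabla\solA)^h$ against $\testA\in C^\infty_c(\Omega)$ and applies Fubini's Theorem twice to move the spatial integration by parts past the time average. Your argument is shorter and more robust (it works verbatim for any bounded linear operator in place of $\nabla$), and it is consistent with how the paper itself invokes Hille's Theorem in \Cref{rem:embedded.spaces.and.Bochner.and.Steklov}; the paper's computation, on the other hand, stays at the level of weak derivatives and avoids any appeal to operator-valued integration theory.
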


\begin{proof}
	To show the first statement we compute
	\begin{align*}
	\frac{1}{h}\int_0^t\left(\solA(s)-\solA(s-h)\right)\md s&=\frac{1}{h}\int_0^t\solA(s)\md s-\frac{1}{h}\int_{-h}^{t-h}\solA(s)\md s\\
	&=\frac{1}{h}\int_{t-h}^t\solA(s)\md s-\frac{1}{h}\int_{-h}^0\solA(s)\md s=\solA^h(t)-\solA^h(0),
	\end{align*}
	which proves \eqref{eq:absolutely.continuous.Fundamental.Integral}. The claim now follows using \Cref{prop:Fund.Thm.Cal.Bochner_spaces}.	
	To show statement \textit{(i)}  we assume $\solA$ to be continuous and consider any compact subinterval $[a,b]\subset\IR$.
	Let $\varepsilon>0$ and pick $\delta>0$ such that $\norm{\solA(t)-\solA(s)}_X<\varepsilon$ for all $t,s\in[a,b]$, $\abs{t-s}<\delta$ and let $0<h<\delta$. Then, $\norm{\solA^h(t)-\solA(t)}_X\leq\frac{1}{h}\int_{t-h}^t\norm{\solA(s)-\solA(t)}_X\md s<\varepsilon$ for all $t\in[a,b]$, so $\solA^h\to \solA$ uniformly on $[a,b]$ as $h\to 0$.
	
	To prove \textit{(ii)} for a general $\solA\in L^p_\loc(\IR)$ we use the denseness of $C_c(\IR;X)$ in $L^p(\IR;X)$, see Lemma 1.2.31 
	in \cite{Hyt-Neerv-Ver-Weis2016}, in the following $\frac{\varepsilon}{3}$-argument: fix $t_1\leq t_2$, $0<\varepsilon<1$ and pick $\solB\in C_c(\IR;X)$ such that $\int_{t_1-1}^{t_2+1}\norm{\solA-\solB}_X^p\leq\frac{\varepsilon}{3}$ and $0<\delta<1$ such that $\int_{t_1-1}^{t_2+1}\norm{\solB^h-\solB}_X^p\leq\frac{\varepsilon}{3}$ for all $0<h<\delta$. Then,
	\[
	\int_{t_1}^{t_2}\norm{\solA^h-\solA}_X^p\leq\int_{t_1}^{t_2}\left(\norm{\solA^h-\solB^h}_X^p+\norm{\solB^h-\solB}_X^p+\norm{\solB-\solA}_X^p\right)\leq \frac{\varepsilon}{3}+\frac{\varepsilon}{3}+\frac{\varepsilon}{3},
	\]
	where we used that $\solA^h-\solB^h=(\solA-\solB)^h$ and that for any $\solC\in L^1_\loc(\IR;X)$ we have that
	\begin{align*}
	\int_{t_1}^{t_2}\norm{\solC^h}_X^p
	&\leq\frac{1}{h}\int_{t_1}^{t_2}\int_{t-h}^{t}\norm{\solC(s)}_X^p\md s\md t=\frac{1}{h}\int_{t_1}^{t_2}\left(\int_{t_1}^{t}\norm{\solC(s)}_X^p\md s-\int_{t_1}^{t-h}\norm{\solC(s)}_X^p\md s\right)\md t\\
	&=\frac{1}{h}\left(\int_{t_1}^{t_2}\int_{t_1}^t\norm{\solC(s)}_X^p\md s\md t-\int_{t_1-h}^{t_2-h}\int_{t_1}^{t}\norm{\solC(s)}_X^p\md s\md t\right)\\
	&=\frac{1}{h}\left(\int_{t_2-h}^{t_2}\int_{t_1}^t\norm{\solC(s)}_X^p\md s\md t-\int_{t_1-h}^{t_1}\int_{t_1}^{t}\norm{\solC(s)}_X^p\md s\md t\right)\leq \int_{t_1-1}^{t_2+1}\norm{\solC(t)}^p\md t.
	\end{align*}
	Therefore, $\int_{t_1}^{t_2}\norm{\solA^h-\solB^h}_X^p\leq \frac{\varepsilon}{3}$.
	Consequently, $\solA^h\to\solA$ in $L^p_\loc(\IR;X)$.
	The point-wise convergence is given by Theorem 2.3.4 and Corollary 2.3.5 
	in \cite{Hyt-Neerv-Ver-Weis2016}.
	
	Statement \textit{(iii)}  is easily shown using the fact that \eqref{eq:absolutely.continuous.Fundamental.Integral} is satisfied for $\solB=\solA_t$, and hence,
	\[
	\partial_t\solA^h(t)=\frac{1}{h}(\solA(t)-\solA(t-h))=\frac{1}{h}\int_{t-h}^t \solA_t(s)\md s =(\solA_t)^h(t)	\quad \text{for almost every}\ t\in(0,T).
	\]
	It follows that $\partial_t\solA^h=\solA_t^h\to \solA_t$ in $L^1_\loc(\IR;X)$.
	Finally, \textit{(iv)}  is verified using \Cref{rem:Steklov.point-wise.value} and Fubini's Theorem twice: let $\testA\in C^\infty_c(\Omega)$, then
	\begin{align*}
	&\int_{\Omega}(\nabla\solA)^h(t)\testA
	=\frac{1}{h}\int_{\Omega}\int_{t-h}^t\nabla\solA(x,s)\eta(x)\md s\md x=\frac{1}{h}\int_{t-h}^t\int_{\Omega}\nabla\solA(x,s)\eta(x)\md x\md s\\
	&\quad=-\frac{1}{h}\int_{t-h}^{t}\int_{\Omega}\solA(x,s)\nabla\eta(x)\md x\md s=-\frac{1}{h}\int_{\Omega}\int_{t-h}^{t}\solA(x,s)\nabla\eta(x)\md s\md x=-\int_{\Omega}\solA^h(t)\nabla\testA,
	\end{align*}
	as desired.
\end{proof}

\begin{lemma}[Time-shifted functions]\label{lem:time-shift.convergence}
	Let $X$ be a Banach space and $\solA\in L^p_\loc(\IR;X)$, $p\in[1,\infty)$, then for any $t_2\geq t_1$ we have
	\[
	\int_{t_1}^{t_2}\norm{\solA(t)-\solA(t-h)}_X^p\md t\to 0
	\]
	as $h\to 0$.
\end{lemma}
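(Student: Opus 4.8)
The plan is to recognize this as the continuity of translations in the Bochner space $L^p(\IR;X)$ and to prove it by the same density-based $\tfrac{\varepsilon}{3}$-argument already used in the proof of \Cref{lem:Steklov.properties}\,\textit{(ii)}. Since only the bounded interval $[t_1,t_2]$ enters and $\solA$ is merely locally $p$-integrable, I would first fix, once and for all, the enlargement $J:=[t_1-1,t_2+1]$ and restrict to shifts $\abs{h}<1$, so that every translate appearing below stays inside $J$. I may assume $t_2>t_1$, the case $t_2=t_1$ being trivial. By the denseness of $C_c(\IR;X)$ in $L^p(\IR;X)$ (Lemma 1.2.31 in \cite{Hyt-Neerv-Ver-Weis2016}), given $\varepsilon>0$ I can choose $\solB\in C_c(\IR;X)$ with $\int_J\norm{\solA-\solB}_X^p<\varepsilon$.

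The core step is the pointwise triangle inequality
\[
\norm{\solA(t)-\solA(t-h)}_X\leq\norm{\solA(t)-\solB(t)}_X+\norm{\solB(t)-\solB(t-h)}_X+\norm{\solB(t-h)-\solA(t-h)}_X,
\]
followed by the convexity bound $(a+b+c)^p\leq 3^{p-1}(a^p+b^p+c^p)$ (valid for $p\geq 1$) and integration over $[t_1,t_2]$. The first term is bounded directly by $\int_J\norm{\solA-\solB}_X^p<\varepsilon$; the third term is handled by the substitution $s=t-h$, which gives $\int_{t_1-h}^{t_2-h}\norm{\solB-\solA}_X^p\leq\int_J\norm{\solA-\solB}_X^p<\varepsilon$, using $\abs{h}<1$ (for either sign of $h$). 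For the middle term I would invoke the uniform continuity of $\solB$, which is continuous with compact support, hence uniformly continuous on $\IR$: there is $\delta\in(0,1)$ with $\norm{\solB(t)-\solB(t-h)}_X^p<\varepsilon/(1+t_2-t_1)$ for all $t$ whenever $\abs{h}<\delta$, so that $\int_{t_1}^{t_2}\norm{\solB(t)-\solB(t-h)}_X^p<\varepsilon$. Combining the three estimates yields $\int_{t_1}^{t_2}\norm{\solA(t)-\solA(t-h)}_X^p\leq 3^p\varepsilon$ for all $\abs{h}<\delta$, and since $\varepsilon>0$ was arbitrary the limit follows.

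I expect no genuine obstacle here; the only points requiring care are bookkeeping. First, the enlarged interval $J$ and the bound $\abs{h}<1$ must be fixed \emph{before} selecting $\solB$, so that the single approximant $\solB$ simultaneously controls both the unshifted and the shifted copies of $\solA-\solB$ on $[t_1,t_2]$. Second, the restriction $p<\infty$ is essential, as it is precisely what makes $C_c(\IR;X)$ dense and what the convexity inequality exploits; no extension to $p=\infty$ is to be expected. Finally, I note that the two-sided statement as $h\to 0$ is exactly what is invoked in \Cref{prop:chain.rule.in.time} and in the proof of \Cref{thm:L1-contraction} to pass to the limit in the Steklov-averaged and time-shifted quantities.
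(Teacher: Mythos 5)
Your proof is correct and follows essentially the same route as the paper: the paper's proof simply cites the denseness of $C_c(\IR;X)$ in $L^p(\IR;X)$ (Lemma 1.2.31 in \cite{Hyt-Neerv-Ver-Weis2016}) together with the standard translation-continuity argument of Lemma 4.3 in \cite{Brezis2010}, which is precisely the $\tfrac{\varepsilon}{3}$-argument you wrote out in full. Your explicit bookkeeping (the enlarged interval $J$, the restriction $\abs{h}<1$, and uniform continuity of the approximant) is exactly what that cited argument contains.
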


\begin{proof}
	The statement follows from the fact that $C_c(\IR;X)$ is dense in $L^p(\IR;X)$, by Lemma 1.2.31 
	in \cite{Hyt-Neerv-Ver-Weis2016}, because it allows to use an analogous argument as provided in Lemma 4.3 
	in \cite{Brezis2010}.
\end{proof}
	
\noindent \textbf{Acknowledgement.} We thank K.\ Mitra for bringing the articles \cite{Alt-Luck1983} and \cite{Otto95} to our attention.
\\[2ex]
Funding: The second author is supported by the NWO grant OCENW.KLEIN.358.
	
\bibliographystyle{abbrv}
\bibliography{bibliography}
\end{document}